\def\authorsPS{}
\newcommand\authPS[8]{\ifnum\authPSc<1\def\authPSc{1}\else, \fi{\large\sffamily #1 #2}
\edef\authorsPS{\authorsPS \par\vskip 1mm\noindent #1 #2:\hskip 5mm #3, #4, #5, #6, #7, #8}}
\numberwithin{equation}{section}
\newtheorem{theorem}{Theorem}[section]
\newtheorem{lemma}[theorem]{Lemma}
\theoremstyle{definition}
\newtheorem{definition}[theorem]{Definition}
\theoremstyle{remark}
\newtheorem{remark}[theorem]{Remark}
\def\authPSc{0}
\newcommand{\beq}[1]{
\begin{equation}\label{#1}}
\newcommand{\eeq}{\end{equation}}
\newcommand{\req}[1]{{\rm(\ref{#1})}}
\newcommand{\hten}{{\mathfrak H}}
\newcommand{\Nt}{\lfloor nt \rfloor}
\newcommand{\DXj}{\Delta X_{\frac jn}}
\newcommand{\Indi}[1]{\mathbbm{1}_{#1}}
\newcommand{\floor}[1]{\left\lfloor#1\right\rfloor}
\newcommand{\Ip}[1]{\left\langle #1 \right\rangle}
\newcommand{\Comb}[1]{\left(\begin{array}{c}#1\end{array}\right)}
\newcommand{\Norm}[1]{\left\lVert#1\right\rVert}
\newcommand{\Abs}[1]{\left|#1\right|}
\newcommand{\Cc}{\mathcal{C}}
\newcommand{\Fc}{\mathcal{F}}
\newcommand{\Gc}{\mathcal{G}}
\newcommand{\Hc}{\mathcal{H}}
\newcommand{\Lc}{\mathcal{L}}
\newcommand{\N}{\mathbb{N}}
\newcommand{\E}{\mathbb{E}}
\newcommand{\R}{\mathbb{R}}
\newcommand{\Pb}{\mathbb{P}}
\newcommand{\Hg}{\mathfrak{H}}
\begin{document}


\title{Symmetric stochastic integrals with respect to a class of self-similar Gaussian processes}
\author{Daniel Harnett, Arturo Jaramillo, David Nualart \thanks{D. Nualart was supported by the NSF grant  DMS1512891.} }
\date{\today}
 
 \maketitle

\begin{abstract}
We study the asymptotic behavior of the $\nu$-symmetric Riemman sums for functionals of a self-similar centered Gaussian process $X$ with increment exponent $0<\alpha<1$. We prove that, under mild assumptions on the covariance of $X$, the law of the weak $\nu$-symmetric Riemman sums converge in the Skorohod topology when $\alpha=(2\ell+1)^{-1}$, where $\ell$ denotes the smallest positive integer satisfying 
$\int_{0}^{1}x^{2j}\nu(dx)=(2j+1)^{-1}$ for all $j=0,\dots, \ell-1$. In the case $\alpha>(2\ell+1)^{-1}$, we prove that the convergence holds in probability. 
\end{abstract}

\footnotetext{{\it 2010 Mathematics Subject Classification:} 60H05; 60G18; 60G22; 60H07}
\footnotetext{{\it Keywords and Phrases:} Fractional Brownian motion,  self-similar processes,  Stratonovich integrals, central limit theorem.}

\section{Introduction}\label{Sec:intro}
Consider a centered self-similar Gaussian process $X:=\{X_{t}\}_{t\geq0}$ with self-similarity exponent $\beta \in (0,1)$ defined on a probability space $(\Omega, \mathcal{F}, \mathbb{P})$. That is, $X$ is a centered Gaussian process such that $\{c^{-\beta}X_{ct}\}_{t\geq0}$ has the same law as $X$, for every $c>0$.  We also assume that $X_0=0$. The covariance of $X$ is characterized by the values of the function $\phi:[1,\infty)\rightarrow\R$, defined by 
\begin{align}\label{eq:phicov}
\phi(x)
  &:=\E\left[X_{1}X_{x}\right].
\end{align}
Indeed, for $0< s\leq t$, 
\beq{eq:R} 
R(s,t) := \mathbb{E}\left[ X_s X_t\right] = s^{2\beta}\phi(t/s).
\eeq  
The idea of describing a self-similar Guassian process in terms of the function $\phi$ was first used by Harnett and Nualart in \cite{HN_14}, and the concept was further  developed in \cite{Hermite}. 

The purpose of this paper is to study the  behavior as $n\rightarrow\infty$ of   $\nu$-{\it symmetric Riemann sums} with respect to $X$, defined by 
\beq{nu_integral}
S^\nu_n(g, t) := \sum_{j=0}^{\Nt-1}\int_0^1 g(X_{\frac jn} + y \DXj) \DXj \nu(dy),
\eeq
where $\Delta X_{\frac{j}{n}} = X_{\frac{j+1}{n}} - X_{\frac{j}{n}}$, $g:\mathbb{R}\to\mathbb{R}$ is a sufficiently smooth function and $\nu$ is a  symmetric probability measure on $[0,1]$, meaning that $\nu(A)=\nu(1-A)$ for any Borel set $A\subset[0,1]$. 

 The best known self-similar centered Gaussian process is the fractional Brownian motion (fBm) of Hurst parameter $H\in (0,1)$, whose covariance is given by
\begin{equation} \label{cov}
R(s,t) = \frac{1}{2}\left( t^{2H}+s^{2H}-\Abs{t-s}^{2H}\right).
\end{equation}
The $\nu$-symmetric Riemann sums $S^\nu_n(g, t)$ given in (\ref{nu_integral}) were investigated in the seminal paper by Gradinaru, Nourdin, Russo and Vallois  \cite{GNRV},   when $X$ is a fBm with Hurst parameter $H$. In this case, if $g$ is a function of the form $g=f^{\prime}$ with $f\in \mathcal{C}^{4\ell(\nu)+2}(\R)$ and $\ell=\ell(\nu) \ge 1$ denotes the largest integer such that
\[ 
\int_0^1 \alpha^{2j} \nu(d\alpha) = \frac{1}{2j+1},\ \ \ \ \text{for}\ \ \ \  j = 1, \dots, \ell-1,
\]
then, provided that $H>\frac{1}{4\ell+2}$, there exists a random variable $\int_{0}^{t}g(X_{s})d^{\nu}X_{s}$ such that
\[
S^\nu_n(g, t)\stackrel{\Pb}{\longrightarrow}\int_{0}^{t}g(X_{s})d^{\nu}X_{s}\ \ \ \ \ \ \ \text{as }\ n\rightarrow\infty.
\]
The limit in the right-hand side is called  the $\nu$-{\it symmetric integral} of $g$ with respect to $X$, and satisfies the chain rule
\begin{align*}
f(X_{t})
  &=f(0)+\int_{0}^{t}f^{\prime}(X_{s})d^{\nu}X_{s}.
\end{align*}
The results from \cite{GNRV} provided a method for constructing  Stratonovich-type integrals in the rough-path case where $H < 1/2$. Some well-known examples of measures $\nu$  and their corresponding $\nu$-symmetric Riemann sums are:
\begin{enumerate}
\item  Trapezoidal rule $(\ell = 1)$:  $\nu = \frac 12(\delta_0 + \delta_1)$,
\item  Simpson's rule $(\ell = 2)$:  $\nu = \frac 16(\delta_0 + 4 \delta_{1/2} + \delta_1)$,
\item  Milne's rule $(\ell = 3)$:  $\nu = \frac{1}{90}(7\delta_0 + 32\delta_{1/4}+12\delta_{1/2}+32\delta_{3/4}+7\delta_1)$,
\end{enumerate}
where $\delta_x$ is the Dirac function.  For example, if $\nu = \frac 12 (\delta_0 + \delta_1)$, then \req{nu_integral} is the sum
\[ S_n^\nu(g,t) = \sum_{j=0}^{\Nt -1} \frac{g(X_{\frac jn}) + g(X_{\frac{j+1}{n}})}{2}\DXj, \]
which is the standard Trapezoidal rule from elementary Calculus.  If $X$ is fBm with  Hurst parameter $H > \frac{1}{6}$, then the Trapezoidal rule sum converges in probability as $n$ tends to infinity (see \cite{Cheridito, GNRV}), but in general the limit  does not exist if $H \le \frac{1}{6}$.  

More generally, it is known that $S^\nu_n(g, t)$ does not necessarily converge in probability if $H \le \frac{1}{4\ell+2}$.  Nevertheless, in certain instances of the case $H=\frac{1}{4\ell+2}$, it has been found that $S^\nu_n(g, t)$ converges in law to a random variable with a conditional Gaussian distribution. Cases $\ell =1$ and $\ell=2$ were studied in \cite{NRS} and \cite{Simpson}, respectively.  More recently, Binotto, Nourdin and Nualart have obtained the following general result  for $H = \frac{1}{4\ell+2}$:
\begin{theorem}[\cite{BNN}]\label{thm:BiNoNu}
Assume $X$ is a fBm of Hurst parameter $H = \frac{1}{4\ell+4}$. Consider a function $f\in\Cc^{20\ell+5}(\R)$ such that $f$ and its derivatives  up to the order $20 \ell +5$ have moderate growth (they are bounded by $Ae^{B |x|^\alpha}$, with $\alpha <2$). Then,
\begin{align}
S_{n}^{\nu}(f^{\prime},t)
  \stackrel{\Lc}{\rightarrow}f(X_{t})-f(0)-c_{\nu}\int_{0}^{t}f^{(2\ell+1)}(X_{s})dW_{s}\ \ \ \ \ \text{as}\ n\rightarrow\infty,
\end{align}
where $c_{\nu}$ is some positive constant, $W$ is a Brownian motion independent of $X$ and the convergence holds in the topology of the Skorohod space ${\bf D}[0,\infty)$. 
\end{theorem}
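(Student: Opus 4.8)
The plan is to peel off the exact telescoping part of $S_n^\nu(f',t)$ and reduce the theorem to a stable central limit theorem for a single weighted odd power variation. The starting point is the identity $f(X_{\frac{j+1}{n}})-f(X_{\frac jn})=\int_0^1 f'(X_{\frac jn}+y\DXj)\DXj\,dy$, which says that replacing $\nu$ by the uniform (Lebesgue) measure $\lambda$ on $[0,1]$ turns $S_n^\nu(f',t)$ into the telescoping sum $f(X_{\Nt/n})-f(0)$. Subtracting and Taylor expanding $f'$ about $X_{\frac jn}$, the symmetry of $\nu$ together with the order-$\ell$ moment conditions (the moments of $\nu$ and of $\lambda$ coincide up to order $2\ell-1$) annihilate every contribution of order $\le 2\ell$, leaving
\[
S_n^\nu(f',t)-\big(f(X_{\Nt/n})-f(0)\big)=\kappa_\nu\, V_n(t)+\mathcal{R}_n(t),\qquad \kappa_\nu=\tfrac{1}{(2\ell)!}\Big(\textstyle\int_0^1 y^{2\ell}\nu(dy)-\tfrac{1}{2\ell+1}\Big)\neq0,
\]
where $V_n(t):=\sum_{j=0}^{\Nt-1}f^{(2\ell+1)}(X_{\frac jn})(\DXj)^{2\ell+1}$ and $\mathcal R_n$ collects the Taylor remainders of order $\ge 2\ell+2$. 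The high smoothness $f\in\Cc^{20\ell+5}$ and the moderate-growth bounds are exactly what is needed to show $\mathcal R_n\to0$ in $\mathbf D[0,\infty)$, and since $X$ is continuous we have $f(X_{\Nt/n})\to f(X_t)$. Thus the theorem follows once we prove that $\kappa_\nu V_n\Rightarrow -c_\nu\int_0^{\,\cdot}f^{(2\ell+1)}(X_s)\,dW_s$ stably, with $W$ a Brownian motion independent of $X$.

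To analyze $V_n$ I would expand $(\DXj)^{2\ell+1}$ in Hermite polynomials of the normalized increment $\DXj/\sigma_{j,n}$, $\sigma_{j,n}^2=\E[(\DXj)^2]$, writing it as a finite combination of terms in the odd Wiener chaoses $\Hc_1,\Hc_3,\dots,\Hc_{2\ell+1}$ of the isonormal Gaussian space $\Hg$ attached to the fBm $X$. The first-chaos piece reassembles into a Riemann--Stieltjes sum that is of strictly lower order and drops out, so the fluctuations are carried by the chaoses of order $\ge3$. The value $H=\tfrac{1}{4\ell+4}$ is the critical one, at which $V_n$ sits exactly at the central-limit scale: its variance stabilizes and the limit is conditionally Gaussian given $X$, with conditional variance proportional to $\int_0^t\big(f^{(2\ell+1)}(X_s)\big)^2\,ds$. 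Concretely, I would establish the finite-dimensional \emph{stable} convergence by proving, for every choice of times $t_1,\dots,t_m$ and reals $\xi,\lambda_1,\dots,\lambda_m$,
\[
\E\Big[e^{\,i\xi V_n(t)}\,e^{\,i\sum_r\lambda_r X_{t_r}}\Big]\;\longrightarrow\;\E\Big[e^{-\frac{\xi^2}{2}\,\frac{c_\nu^2}{\kappa_\nu^2}\int_0^t(f^{(2\ell+1)}(X_s))^2\,ds}\;e^{\,i\sum_r\lambda_r X_{t_r}}\Big],
\]
which simultaneously encodes the Gaussianity of the limiting fluctuation and its independence from $X$, and fixes the constant $c_\nu$.

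For the finite-dimensional convergence the natural route is a blocking/conditional martingale-CLT argument: partition the indices into long blocks on which the weight $f^{(2\ell+1)}(X_{\frac jn})$ is essentially frozen, approximate each within-block sum by a conditionally independent Gaussian contribution, and identify the telescoped block sums with a Riemann sum converging to $\int_0^t f^{(2\ell+1)}(X_s)\,dW_s$. The quantitative engine is a diagram-formula evaluation of the joint cumulants of the $(\DXj)^{2\ell+1}$, which for fBm reduce to sums of products of the stationary increment correlations $\rho(j-k)=n^{2H}\E[\DXj\DXk]$ coming from \eqref{cov}; at $H=\tfrac{1}{4\ell+4}$ one must show that the diagonal cumulant sum converges to the deterministic density $ds$ while all off-diagonal and mixed-chaos sums are asymptotically negligible. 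Tightness of $\{V_n\}$ in $\mathbf D[0,\infty)$ I would obtain from hypercontractivity: $V_n(t)-V_n(s)$ lies in a fixed finite sum of chaoses, so its $L^p$-norms are dominated by its $L^2$-norm, and a uniform bound $\E[(V_n(t)-V_n(s))^2]\lesssim|t-s|$ yields the Billingsley moment criterion. The main obstacle is the stable-convergence step: unlike a single-chaos fourth-moment theorem, $V_n$ blends several odd chaoses whose interaction does not self-average to a constant but instead produces the \emph{random} conditional variance, so one must simultaneously prove asymptotic independence of the Gaussian fluctuation from $X$ (via the vanishing of the relevant Malliavin-derivative inner products $\langle DV_n,DX_{t_r}\rangle_\Hg$) and control the delicate cumulant series that pins down $c_\nu$; this is where essentially all the analytic work of the proof is concentrated.
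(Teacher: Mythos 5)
Your overall architecture --- reduce $S_n^\nu(f',t)$ to a weighted odd power variation via the Taylor expansion of the $\nu$-average, then prove a stable CLT for it by freezing the weight on big blocks --- is the right strategy and matches \cite{GNRV,BNN} and the present paper. But your reduction step contains a genuine gap: you expand about the \emph{left endpoint} $X_{\frac jn}$ and discard everything of order $\geq 2\ell+2$ into $\mathcal{R}_n$, and at the critical parameter this fails. Symmetry of $\nu$ about $\frac12$ forces the odd moments of $\nu$ to match those of Lebesgue measure only up to order $2\ell-1$; in fact $\int_0^1 y^{2\ell+1}\nu(dy)-\frac{1}{2\ell+2}=\frac{2\ell+1}{2}\bigl(\int_0^1 y^{2\ell}\nu(dy)-\frac{1}{2\ell+1}\bigr)\neq 0$, so your $\mathcal{R}_n$ contains the even-power term $c\sum_{j}f^{(2\ell+2)}(X_{\frac jn})(\DXj)^{2\ell+2}$, whose expectation is of order $n\cdot n^{-(2\ell+2)H}\asymp n^{\frac{\ell}{2\ell+1}}\to\infty$ at the critical Hurst index: it is not a remainder at all. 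Correspondingly, your left-endpoint weighted variation $V_n$ itself has divergent mean of the same order, because $\E[f^{(2\ell+1)}(X_{\frac jn})(\DXj)^{2\ell+1}]$ has leading term proportional to $\xi_{j,n}^{2\ell}\,\E[f^{(2\ell+2)}(X_{\frac jn})]\,\langle\varepsilon_{\frac jn},\partial_{\frac jn}\rangle_{\hten}$, and the inner product $\langle\varepsilon_{\frac jn},\partial_{\frac jn}\rangle_{\hten}\asymp -\tfrac12 n^{-2H}$ is of the same order as $\xi_{j,n}^2$ and not summably small --- in sharp contrast with the midpoint quantity $\langle\widetilde{\varepsilon}_{\frac jn},\partial_{\frac jn}\rangle_{\hten}=\tfrac12\E[X_{\frac{j+1}{n}}^2-X_{\frac jn}^2]$, which telescopes (Lemma \ref{lem:aux}). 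These two divergences cancel only jointly, which is exactly what the \emph{midpoint} expansion \eqref{eq:Taylor_expansion} of \cite[Theorem~3.6]{GNRV} accomplishes: it kills all even powers identically, replaces your weight by $f^{(2h+1)}(\widetilde{X}_{\frac jn})$, and produces the whole family of odd terms $h=\ell,\dots,2\ell$. Note also that each intermediate term $h\in\{\ell+1,\dots,2\ell\}$ still diverges in absolute value (the crude bound is $n^{1-(2h+1)H}\to\infty$ for $2h+1<4\ell+2$), so none of them can be dumped into a crude remainder either: the expansion must be pushed to order $4\ell+2$, the first order at which an absolute-value estimate closes, and the intermediate terms must be killed in $L^2$ through the multiple-Skorohod-integral decomposition, as in Lemma \ref{lem:tight}.

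A second, smaller gap is your tightness argument: you invoke hypercontractivity on $V_n(t)-V_n(s)$, claiming it "lies in a fixed finite sum of chaoses," but your $V_n$ is \emph{weighted} by $f^{(2\ell+1)}(X_{\frac jn})$ and hence has components in every chaos; hypercontractivity applies only to the unweighted variation (Theorem \ref{Variations}). For the weighted sums the $L^p$ increment bounds must instead come from Meyer's inequalities \eqref{eq:Meyer} applied to the $\delta^{q}$-representation, as in \eqref{ineq:Psibound} --- this is also where most of the derivative budget on $f$ is spent, so "$\mathcal{C}^{20\ell+5}$ is exactly what is needed for $\mathcal{R}_n$" misplaces the cost. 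Your small-blocks/big-blocks plan and the frozen-weight CLT are sound in outline and are essentially Lemma \ref{lem:fdd} combined with Theorem \ref{thm:NNCLT}; the paper reaches the Gaussian limit via the Peccati--Tudor contraction criterion rather than your diagram-formula cumulant computation, but that choice is cosmetic. (Incidentally, the critical value is $H=\frac{1}{4\ell+2}$, i.e.\ $\alpha=\frac{1}{2\ell+1}$; the value $\frac{1}{4\ell+4}$ in the quoted statement is a typo which you adopted uncritically.)
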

The previous convergence can be written as the following change of variables formula in law:
\begin{align*}
f(X_{t})
  &=f(0)+\int_{0}^{t}f^{\prime}(X_{s})d^{\nu}X_{s}+c_{\nu}\int_{0}^{t}f^{(2\ell+1)}(X_{s})dW_{s}.
\end{align*}

When extending these results to self-similar processes,  surprisingly the critical value is not the scaling  parameter $\beta$ but the {\it increment exponent} $\alpha$ which controls the variance of the increments of $X$ and is defined below.

\begin{definition}
We say that $\alpha$ is the increment exponent for $X$ if for any $0<\epsilon<T<\infty$ there are positive constants $0<c_1 \le c_2$ and $\delta>0$, such that
\begin{align}\label{eq:alphdadef}
c_1 s^\alpha \le \mathbb{E}\left[ (X_{t+s} - X_t)^2\right] \le c_2 s^\alpha,
\end{align}
for every $t\in[\epsilon ,T]$ and $s\in[0,\delta)$.

\end{definition}
The extension of stochastic integration to nonstationary Gaussian processes has been studied  in the papers \cite{Swanson10, HN_12, HN_14}. 
 Each of these papers considered critical values of $\alpha$, for which  particular $\nu$-symmetric Riemann sums $S^\nu_n(g, t)$ converge in distribution (but not necessarily in probability) to a limit which has a Gaussian distribution given  the process $X$. 
 For the fBm, $\alpha =2H$ and the critical value for $\alpha$ coincides with $H=\frac{1}{4\ell+2}$. Papers \cite{Swanson10, HN_14} were both based on the Midpoint integral, and show that the corresponding critical value is $\alpha = \frac{1}{2}$.  Because of the structure of the measure $\nu$, the Midpoint rule integral is not covered in our present paper.  Harnett and Nualart  considered  in \cite{HN_12}  a Trapezoidal integral with $\alpha = \frac{1}{3}$ and  the results in  this paper can be expressed as a special case of Theorem \ref{thm1}  below.

\medskip
\subsection{Main results}\label{sec:mainresult}
Our goal for this paper is to extend the results of \cite{BNN} and \cite{GNRV} to a general  class of self-similar Gaussian processes $X$, and a wider class of functions $g$. In the particular case where $X$ is a  fBm, we extend Theorem \ref{thm:BiNoNu} to the class of functions $f$ with continuous derivatives up to order $8\ell+2$. The idea of the proof is similar to the one presented in \cite{BNN}, but there are technical challenges that arise because in general $X$ is not a stationary process.

Our analysis of the asymptotic distribution of $S_{n}^{\nu}(f^{\prime},t)$ relies heavily on a central limit theorem for the odd variations of $X$, which we establish in Theorem \ref{Variations}. The study of the fluctuations of the variations of $X$ has an interest on its own, and has been extensively studied for the case where $X$ is a fBm (see for instance \cite{RoNuTu} and \cite{CoNuWo}). Nevertheless, Theorem \ref{Variations} is the first one to prove a result of this type for an extended class of self-similar Gaussian process that are  not necessarily stationary.

 For most of the stochastic processes that we consider, such as the fBm and its variants, the self-similarity exponent $\beta$ and the increment exponent $\alpha$ satisfy $\alpha = 2\beta$, but there are examples where $\alpha < 2\beta$. In the sequel, we will assume that the parameters $\alpha$ and $\beta$ satisfy $0<\alpha <1$, $\beta\le 1/2$ and $\alpha \le 2\beta$. Following \cite{Hermite}, we assume as well that the function $\phi$ introduced in (\ref{eq:phicov}), satisfies the following conditions:
\begin{enumerate}[(H.1)]
\item   $\phi$ is twice continuously differentiable in $(1, \infty)$ and for some $\lambda>0$ and $\alpha \in (0,1)$, the function
\begin{equation}
\psi(x)= \phi(x)+ \lambda(x-1) ^\alpha  \label{phi1}
\end{equation}
has a bounded derivative in $(1,2]$.
\item  There are constants $C_1, C_2>0$ and $1< \nu \le 2$ such that 
\begin{equation} \label{ec1}
|\phi''(x)| \le C_1\Indi{(1,2]}(x) (x-1)^{\alpha-2} + C_2\Indi{(2,\infty)}(x) x^{-\nu-1}.
\end{equation}
\end{enumerate}
Although the formulation is slightly different, these hypotheses are equivalent to conditions (H.1) and (H.2) in \cite{Hermite}, with the restrictions
 $\alpha < 1$ and $2\beta \le 1$. In particular, they imply that
\begin{equation} \label{ec2}
|\phi'(x)| \le C'_1\Indi{(1,2]}(x) (x-1)^{\alpha-1} + C'_2\Indi{(2,\infty)}(x) x^{-\nu},
\end{equation}
for some constants $C_1'$ and $C_2'$.
Notice that by Lemma \ref{lem3.2}  in the Appendix,   Hypothesis (H.1) implies that  $\alpha$ is the  increment exponent  of $X$.
Moreover the upper bound  in (\ref{eq:alphdadef}) holds for  any $t\in [0,T]$.  

The following are examples of self-similar processes satisfying the above hypotheses (see  \cite{Hermite}):
\begin{enumerate}
\item[(i)]
{\em  Fractional Brownian motion}. This is a centered Gaussian process with covariance function  given by (\ref{cov}). Here (H.1) and (H.2) hold if $H<\frac{1}{2}$. In this case, 
\[
\phi(x)
  =\frac{1}{2}(1+x^{2H}-(x-1)^{2H}),
\]
 $\alpha =2\beta=2H$ and $\nu=2-2H$.

\item[(ii)]
{\em  Bifractional Brownian motion}.  This is a generalization of the fBm, with covariance given by
\[ R(s,t) = \frac{1}{2^K}\left( (t^{2H} + s^{2H})^K - |t-s|^{2HK}\right)\]
for constants $H\in (0,1)$ and $K \in (0,1]$.  See \cite{Houdre, Lei, RussoTudor06} for properties, and note that $K=1$ gives the classic fBm case. Here (H.1) and (H.2) hold if $HK<1$. For this process we have
\[ \phi(x) = \frac{1}{2^K}\left( (1+x^{2H})^K - (x-1)^{2HK}\right)\]
with $\lambda = 2^{-K}$, $\alpha = 2\beta = 2HK$ and $\nu=(2+2H-2HK)\wedge(3-2HK)-1$.

\item[(iii)]
{\em  Subfractional Brownian motion}.  This Gaussian process has been studied in \cite{Bojdecki, Chavez} and  it has a  covariance given by
\[ R(s,t) = s^{2H} + t^{2H} - \frac 12 \left( (s+t)^{2H} +|s-t|^{2H}\right),\]
with parameter $H\in(0,1)$. Here (H.1) and (H.2) hold if $H<\frac{1}{2}$, in which case $\lambda = 1/2$, $\alpha = 2\beta = 2H$, and
\[ \phi(x) = 1+x^{2H} - \frac 12 \left((x+1)^{2H} + (x-1)^{2H}\right).\]

\item[(iv)]
{\em Two processes in a recent paper by Durieu and Wang}.
For $0 < \alpha < 1$, we consider the centered Gaussian processes $Z_1(t),~Z_2(t)$, with covariances given by:
\begin{align*}
\mathbb{E}\left[ Z_1(s)Z_1(t)\right] &= \Gamma(1-\alpha)\left( (s+t)^\alpha - \max(s,t)^\alpha\right)\\
\mathbb{E}\left[ Z_2(s)Z_2(t)\right] &= \Gamma(1-\alpha)\left( s^\alpha+t^\alpha - (s+t)^\alpha\right),
\end{align*}
where $\Gamma(y)$ denotes the Gamma function. These processes are discussed in a recent paper by Durieu and Wang \cite{DuWa}, where it is shown that the process $Z = Z_1 + Z_2$ (where $Z_1,~Z_2$ are independent) is the limit in law of a discrete process studied by Karlin.  The  process $Z_2$, with a different scaling constant, was first described in Lei and Nualart \cite{Lei}. The corresponding functions $\phi$ of these self-similar processes are:
 \[
 \phi_1(x) =-\Gamma(1-\alpha) (x-1)^{\alpha} +\Gamma(1-\alpha)\left((x-1)^{\alpha}+(x+1)^\alpha - x^\alpha\right)
 \]
 and
\begin{align*} \phi_2(x) &= \Gamma(1-\alpha)(1 + x^\alpha - (x+1)^\alpha)\\
&=-\Gamma(1-\alpha)(x-1)^\alpha + \Gamma(1-\alpha)\left( 1+x^\alpha +(x-1)^\alpha - (x+1)^\alpha\right).
\end{align*}
It is shown in \cite{Hermite} that both $\phi_1$ and $\phi_2$ satisfy (H.1) and (H.2), with $2\beta=\alpha$ and $\nu=2-\alpha$.

\item[(v)]
{\em  Gaussian process in a paper by  Swanson}.  This process was introduced in \cite{Swanson07}, and arises as the limit of normalized empirical quantiles of a system of independent Brownian motions.  The covariance is given by
\[ R(s,t) = \sqrt{st} \sin^{-1}\left(\frac{s\wedge t}{\sqrt{st}}\right),\]
and the corresponding function $\phi$ is given by
\[ \phi(x) = \sqrt{x} \sin^{-1}\left(\frac{1}{\sqrt x}\right).\]
This process has $\alpha = \beta=1/2$ and $\nu=2$, so is an example of the case $\alpha < 2\beta$.\\
\end{enumerate}

It is interesting to remark the differences on the asymptotic behavior of both the power variations and the $\nu$-symmetric integrals of $X$, depending on  whether $\alpha=2\beta$ or $\alpha<2\beta$. As we show in Theorem \ref{Variations}, the process of variations of $X$ satisfies an asymptotic nonstationarity property when $\alpha<2\beta$, which differs from the case $\alpha=2\beta$, where the limit process is a scalar multiple of a Brownian motion.  To better describe this phenomena,  we denote by $Y=\{Y_{t}\}_{t\geq0}$ a continuous centered Gaussian process independent of $X$, with covariance function 
\begin{align}\label{eq:Sigmadef}
\E\left[Y_{s}Y_{t}\right]
  &=\Sigma(s,t):=(t\wedge s)^{\frac{2\beta}{\alpha}},
\end{align}
defined on an enlarged probability space $(\Omega,\mathcal{G},  \Pb)$. The process $Y$ is characterized by the property of independent increments, and 
\begin{align*}
\E\left[(Y_{t+s}-Y_{s})^2\right]
  &=t^{\frac{2\beta}{\alpha}}-s^{\frac{2\beta}{\alpha}}\ \ \ \text{ for }\ \ \ 0\leq s\leq t.
\end{align*}
Notice that for $\alpha<2\beta$, the increments of $Y$ are not stationary and when $\alpha=2\beta$, $Y$ is a standard Brownian motion.  We need the following definition of  stable convergence.
\begin{definition}\label{def:stable}
Assume $\xi_n$ is a sequence  random variables defined on $(\Omega, {\cal F}, \mathbb{P})$ with values on a complete and separable metric space $S$ and $\xi$ is an $S$-valued  random variable defined on the enlarged probability space $(\Omega, {\cal G}, \mathbb{P})$.  We say that $\xi_n$ {\em converges stably} to $\xi$ as $n \to \infty$, if for any continuous and bounded function $f:S \to {\mathbb R}$ and any ${\mathbb R}$-valued, ${\cal F}$-measurable bounded random variable $M$, we have
$$\lim_{n\to \infty} {\mathbb E} \left[f(\xi_n) M\right] = {\mathbb E}\left[f(\xi) M\right].$$
\end{definition}

Next we present a central limit theorem for the odd power variations of $X$, which is a key ingredient for proving Theorem \ref{thm1} and illustrates the asymptotic nonstationarity property that we mentioned before.

\begin{theorem}\label{Variations}  Fix an integer $\ell \ge 1$.  Define the functional
\begin{align}\label{eq:Vndef}
V_n(t) = \sum_{j=0}^{\Nt -1} \DXj^{2\ell +1}, \quad t\ge 0.
\end{align}
If $\alpha = \frac{1}{2\ell +1}$ and the process $X$ satisfies (H.1) and (H.2), then for every $0\leq t_{1},\dots, t_{m}<\infty$, $m\geq1$, the vector $(V_{n}(t_{1}),\dots, V_{n}(t_{m}))$ converges  stably to $\sigma_{\ell}(Y_{t_{1}},\dots, Y_{t_{m}})$, where  
\beq{sigma_ell} 
\sigma_{\ell}^2 = \frac{\alpha}{2\beta}\sum_{r=0}^{\ell -1} K_{r,\ell}\sum_{p \in {\mathbb Z}}\left(|p+1|^\alpha + |p-1|^\alpha-2|p|^\alpha \right)^{2(\ell-r)+1},
\eeq
and $K_{r,\ell} = c_{r,\ell}^2 2^{2r}\lambda^{2\ell+1} (2(\ell-r)+1)!$, where $\lambda$ is the constant appearing in Hypothesis (H.1) and $c_{r,\ell}$ are the coefficients introduced in (\ref{eq:hermite_expansion}).
\end{theorem}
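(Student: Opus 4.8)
The plan is to work with the isonormal Gaussian process over the Hilbert space $\Hg$ generated by $X$ and to apply a multivariate fourth moment theorem together with the orthogonality of the Wiener chaoses. For each $j$ let $\varepsilon_{j,n}\in\Hg$ satisfy $I_1(\varepsilon_{j,n})=\DXj$, so that $\Norm{\varepsilon_{j,n}}_{\Hg}^2=\sigma_{j,n}^2:=\E[\DXj^2]$ and $\Ip{\varepsilon_{j,n},\varepsilon_{k,n}}_{\Hg}=\rho_n(j,k):=\E[\DXj\DXk]$, where $I_q$ denotes the $q$th multiple Wiener--It\^o integral. Using the Hermite expansion $x^{2\ell+1}=\sum_{r=0}^{\ell}c_{r,\ell}H_{2(\ell-r)+1}(x)$ of \eqref{eq:hermite_expansion} and the identity $H_q(I_1(\varepsilon)/\Norm{\varepsilon})=\Norm{\varepsilon}^{-q}I_q(\varepsilon^{\otimes q})$, I would decompose
\begin{align*}
V_n(t)=\sum_{r=0}^{\ell}c_{r,\ell}\,V_n^{(r)}(t),\qquad V_n^{(r)}(t):=\sum_{j=0}^{\Nt-1}\sigma_{j,n}^{2r}\,I_{q_r}\big(\varepsilon_{j,n}^{\otimes q_r}\big),\qquad q_r:=2(\ell-r)+1,
\end{align*}
so that each $V_n^{(r)}$ sits in the fixed chaos of order $q_r$ and distinct orders are orthogonal.

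The first analytic input is the covariance asymptotics. From \eqref{eq:R} and (H.1) one extracts the leading behaviour $\sigma_{j,n}^2\sim 2\lambda(j/n)^{2\beta-\alpha}n^{-\alpha}$ and, for fixed $p=k-j$, $\rho_n(j,k)\sim\lambda(j/n)^{2\beta-\alpha}n^{-\alpha}\kappa(p)$, where $\kappa(p):=|p+1|^\alpha+|p-1|^\alpha-2|p|^\alpha$, the bounds \eqref{ec1}--\eqref{ec2} controlling both the error and the long-range tails. Because $\alpha=(2\ell+1)^{-1}$, every near-diagonal summand scales like $n^{-(2\ell+1)\alpha}=n^{-1}$, the critical balance that makes the variances converge. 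Using summability of $\kappa(\cdot)^{q_r}$ (valid for $q_r\ge 3$ since $\kappa(p)=O(|p|^{\alpha-2})$) and the Riemann-sum limit $n^{-1}\sum_{j<ns}(j/n)^{2\beta/\alpha-1}\to\frac{\alpha}{2\beta}s^{2\beta/\alpha}$, I expect for $0\le r\le\ell-1$ and $s\le t$
\begin{align*}
\E\big[(c_{r,\ell}V_n^{(r)}(s))(c_{r,\ell}V_n^{(r)}(t))\big]\longrightarrow K_{r,\ell}\Big(\sum_{p\in\Z}\kappa(p)^{q_r}\Big)\frac{\alpha}{2\beta}\,\Sigma(s,t),
\end{align*}
the near-diagonal localisation moreover forcing the increments of the limit process to be independent, producing exactly the kernel $\Sigma(s,t)=(s\wedge t)^{2\beta/\alpha}$ of $Y$. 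Summing over $r$ (cross terms vanish by orthogonality) gives $\E[V_n(t)^2]\to\sigma_\ell^2\,\Sigma(t,t)$. The term $r=\ell$ (first chaos, $q_\ell=1$) is exceptional: its coefficient is $\sum_{p\in\Z}\kappa(p)$, which vanishes by telescoping, and I would separately show $V_n^{(\ell)}(t)\to 0$ in $L^2$, checking that the surviving boundary terms of this otherwise-telescoping sum are $o(1)$ via \eqref{ec1}--\eqref{ec2}; this is precisely why the sum in \eqref{sigma_ell} stops at $\ell-1$.

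To reach the Gaussian and stable limits I would apply the Nourdin--Peccati fourth moment theorem chaos by chaos: for each $r\le\ell-1$ it suffices that the contractions $\Norm{f_n^{(r)}\otimes_a f_n^{(r)}}\to 0$ for $1\le a\le q_r-1$, where $f_n^{(r)}=\sum_j\sigma_{j,n}^{2r}\varepsilon_{j,n}^{\otimes q_r}$; each such norm is a multiple sum of products of the inner products $\rho_n(j,k)$, bounded exactly as above. To upgrade distributional to \emph{stable} convergence, I would adjoin an arbitrary first-chaos variable $I_1(h)$, $h\in\Hg$, to the vector $(V_n^{(r)}(t_i))_{r,i}$ and invoke the multivariate (Peccati--Tudor) version of the theorem: the cross-covariances of each $V_n^{(r)}$ (with $q_r\ge 3$) against $I_1(h)$ vanish identically and those across different $r$ vanish by orthogonality, so the limiting Gaussian vector is independent of the first chaos, hence of $X$. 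Since $\sum_{r=0}^{\ell-1}c_{r,\ell}G^{(r)}$, with independent Gaussians $G^{(r)}$ of covariance $K_{r,\ell}(\sum_p\kappa(p)^{q_r})\frac{\alpha}{2\beta}\Sigma$, has covariance $\sigma_\ell^2\Sigma$, its law is that of $\sigma_\ell Y$, yielding the stable convergence of $(V_n(t_1),\dots,V_n(t_m))$ to $\sigma_\ell(Y_{t_1},\dots,Y_{t_m})$.

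The main obstacle is the \emph{non-stationarity}. Unlike for fBm, $\rho_n(j,k)$ only approximates the second-difference kernel $\kappa$, and both $\sigma_{j,n}$ and $\rho_n(j,k)$ carry the genuine spatial weight $(j/n)^{2\beta-\alpha}$. One must therefore control, uniformly in $n$, the error between $\rho_n$ and its leading approximation using (H.1)--(H.2), dominate the long-range tails as well as the boundary region near $t=0$ where the increment variance degenerates, and ensure that none of these disturb the summability underpinning both the limiting covariance and the vanishing of the contractions.
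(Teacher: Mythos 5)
Your proposal is correct and follows essentially the same route as the paper: the Hermite-chaos decomposition $V_n=\sum_r c_{r,\ell}V_n^{(r)}$ with separate $L^2$-vanishing of the first-chaos term via the telescoping/summation-by-parts structure, the covariance asymptotics $\sigma_{j,n}^2\sim 2\lambda(j/n)^{2\beta-\alpha}n^{-\alpha}$ and $\rho_n(j,j+p)\sim\lambda(j/n)^{2\beta-\alpha}n^{-\alpha}\kappa(p)$ combined with the Riemann-sum limit $\frac{\alpha}{2\beta}(s\wedge t)^{2\beta/\alpha}$ and near-diagonal localisation, the vanishing contractions, and stable convergence obtained by adjoining first-chaos elements to a Peccati--Tudor-type criterion (the paper's Theorem \ref{thm:NNCLT}). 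Your constants check out against \eqref{sigma_ell}, and your flagged obstacle (non-stationarity) is exactly what the paper handles through Lemmas \ref{lem3.2}, \ref{lem:cov} and \ref{lem:sum_est1}.
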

Our main results are Theorems \ref{thm1} and \ref{Cor1} below. 

\begin{theorem}\label{thm1}
Assume  $f\in {\cal{C}}^{8\ell+2}(\R).$  For a given symmetric probability measure $\nu$ and associated integer $\ell(\nu)$, assume the process $X$ satisfies (H.1) and (H.2) with $2\beta\geq \alpha= \frac{1}{2\ell +1}$.  Then, as $n$ tends to infinity,
\[
 \{S_n^\nu(f',t)\}_{t\geq0} \stackrel{Stably}{\longrightarrow} \{f(X_t) - f(0) - \kappa_{\nu,\ell}\sigma_\ell\int_0^t f^{(2\ell +1)}(X_s)~dY_s\}_{t\geq0},
 \]
where $\sigma_\ell$ and $\kappa_{\nu,\ell}$ are the constants given by \req{sigma_ell} and \eqref{def:kappa}, respectively,  and the convergence is in the Skorohod space $\mathbf{D}[0,\infty)$.  Consequently, we have the It\^o-like formula in law
\[
f(X_t) \stackrel{\cal L}{=} f(0) + \int_0^t f'(X_s)~d^\nu X_s + \kappa_{\nu,\ell}\sigma_\ell\int_0^t f^{(2\ell+1)}(X_s)~dY_s.\]
\end{theorem}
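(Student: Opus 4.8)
The plan is to reduce the statement to a weighted version of the odd-power central limit theorem of Theorem \ref{Variations}. Since $S_n^\nu(f',t)$ only involves the grid $\{j/n\}$, I would start from the telescoping identity
\[
f(X_{\Nt/n}) - f(0) = \sum_{j=0}^{\Nt-1}\big(f(X_{(j+1)/n}) - f(X_{j/n})\big),
\]
and study the discrepancy $\sum_{j=0}^{\Nt-1}D_j$, where $D_j := f(X_{(j+1)/n}) - f(X_{j/n}) - \int_0^1 f'(X_{j/n}+y\DXj)\DXj\,\nu(dy)$. The objective is to show this discrepancy converges stably to $\kappa_{\nu,\ell}\sigma_\ell\int_0^t f^{(2\ell+1)}(X_s)\,dY_s$; combined with the continuity of $X$ (which handles $f(X_t)-f(X_{\Nt/n})$), this yields the claimed convergence in $\mathbf{D}[0,\infty)$.

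The crucial first step is to expand $D_j$ around the \emph{midpoint} $\bar X_j := \tfrac12(X_{j/n}+X_{(j+1)/n})$ rather than around the left endpoint. Writing $X_{j/n}+y\DXj = \bar X_j + (y-\tfrac12)\DXj$ and using the symmetry $\nu(dy)=\nu(1-dy)$, both $f(X_{(j+1)/n})-f(X_{j/n})$ and $\int_0^1 f'(\bar X_j+(y-\tfrac12)\DXj)\DXj\,\nu(dy)$ become \emph{even} functions of $(y-\tfrac12)\DXj$, so their Taylor expansions contain only \emph{odd} powers of $\DXj$. This symmetric bookkeeping is essential: it eliminates the even powers $\DXj^{2\ell+2},\DXj^{2\ell+4},\dots$, whose weighted sums would otherwise diverge at the critical exponent. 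The moment-matching condition defining $\ell(\nu)$ forces the coefficients of $\DXj,\DXj^{3},\dots,\DXj^{2\ell-1}$ to vanish, so that
\[
D_j = \kappa_{\nu,\ell}\,f^{(2\ell+1)}(\bar X_j)\,\DXj^{2\ell+1} + \sum_{i>\ell}c_i\,f^{(2i+1)}(\bar X_j)\,\DXj^{2i+1} + (\text{Taylor remainder}),
\]
with $\kappa_{\nu,\ell}$ the constant in \eqref{def:kappa}, which measures the mismatch of the $2\ell$-th central moment of $\nu$. A second-moment computation — using $\E[\DXj^2]\asymp n^{-\alpha}$ from (H.1), the Gaussian (hence mean-zero) oddness of $\DXj^{2i+1}$, and $\alpha=(2\ell+1)^{-1}$ — should show that the higher odd terms ($i>\ell$), together with their low-order Wiener-chaos projections and the Taylor remainders, sum to something that vanishes in $L^2$ uniformly on compact time intervals; this is where the assumption $f\in\Cc^{8\ell+2}(\R)$ is consumed.

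It then remains to prove that the leading term $\sum_{j=0}^{\Nt-1}f^{(2\ell+1)}(\bar X_j)\DXj^{2\ell+1}$ converges stably, as a process in $t$, to $\sigma_\ell\int_0^t f^{(2\ell+1)}(X_s)\,dY_s$. I would deduce this from Theorem \ref{Variations} by a blocking argument: partition $[0,t]$ into subintervals $[t_{i-1},t_i]$, freeze the weight as $f^{(2\ell+1)}(\bar X_j)\approx f^{(2\ell+1)}(X_{t_{i-1}})$ on each block, and use that the increment $V_n(t_i)-V_n(t_{i-1})$ converges stably to $\sigma_\ell(Y_{t_i}-Y_{t_{i-1}})$. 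Because $Y$ is independent of $X$ with independent increments, the limit is the sum $\sum_i f^{(2\ell+1)}(X_{t_{i-1}})\sigma_\ell(Y_{t_i}-Y_{t_{i-1}})$, which converges to $\sigma_\ell\int_0^t f^{(2\ell+1)}(X_s)\,dY_s$ as the mesh shrinks. The stable mode of convergence of Definition \ref{def:stable} is precisely what allows the limit to live against the independent noise $Y$, and it also guarantees joint convergence with $(X_t)_{t\ge0}$, so that $f(X_t)-f(0)$ and the integral against $Y$ appear simultaneously.

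Finally, to upgrade finite-dimensional stable convergence to convergence in $\mathbf{D}[0,\infty)$ I would establish tightness of $\{S_n^\nu(f',\cdot)\}$, e.g.\ via a moment bound $\E[|S_n^\nu(f',t)-S_n^\nu(f',s)|^{2p}]\le C|t-s|^{1+\epsilon}$ on compacts, splitting into the (tight, continuous) telescoping part and the variation part controlled as above. I expect the main obstacle to be the combination of two effects absent in the stationary fBm case of \cite{BNN}: first, the need to make every error estimate \emph{uniform} at the \emph{critical} exponent $\alpha=(2\ell+1)^{-1}$, where the higher-order and weight-freezing errors are only borderline summable and must be squeezed using the precise decay in (H.1)--(H.2); and second, the non-stationarity of $X$ when $\alpha<2\beta$, which makes the increment covariance genuinely $t$-dependent and turns the limit $Y$ into a non-Brownian Gaussian process, so that the chaos and covariance estimates underlying the weighted central limit theorem cannot invoke stationarity and must instead be driven entirely by the self-similarity and the bounds on $\phi$.
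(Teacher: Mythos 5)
Your plan follows the paper's route almost exactly: the midpoint Taylor expansion exploiting the symmetry of $\nu$ and the moment-matching conditions (which the paper imports from \cite[Theorem~3.6]{GNRV} as \eqref{eq:Taylor_expansion}, producing precisely the weighted odd-power sums $\Phi_n^h$, $h=\ell,\dots,2\ell$, plus a remainder of order $\DXj^{4\ell+2}$), the small-blocks/big-blocks freezing of the weight $f^{(2\ell+1)}$ to transfer the stable convergence of Theorem \ref{Variations} to the weighted sum (the paper's Lemma \ref{lem:fdd}), and tightness via moment bounds on increments (the paper's Lemma \ref{lem:tight}, through the Billingsley criterion). However, there is one step that would fail as stated: you run every $L^2$ and higher-moment estimate --- the vanishing of the $h>\ell$ terms, the weight-freezing error, the tightness bound $\E\left[|\cdot|^{2p}\right]\le C|t-s|^{1+\epsilon}$ --- directly for a general $f\in\Cc^{8\ell+2}(\R)$. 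No growth condition is assumed on $f$, so $f^{(2\ell+1+r)}(\widetilde{X}_{\frac jn})$ need not even be square integrable (a continuous function of a Gaussian can fail to have moments), and your claim that ``this is where $f\in\Cc^{8\ell+2}$ is consumed'' does not survive scrutiny. The paper devotes a substantial portion of Section \ref{proof_main} to fixing exactly this: it isolates properties (i)--(iv), proves them only for compactly supported $f$, and transfers them to general $f$ via the localization events $A_{L,T}=\{\sup_{0\leq s\leq T}|X_s|\leq L\}$, on which $\Phi_n^h$ coincides with its compactly supported modification. Some such truncation must be inserted before any of your moment computations make sense.

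A second, smaller point: your ``second-moment computation'' for the terms with $i>\ell$ is more delicate than you suggest. Since the weights $f^{(2i+1)}$ evaluated near $\frac jn$ are correlated with $\DXj$, the mean-zero-ness of $\DXj^{2i+1}$ by itself gives nothing for the off-diagonal covariances. The paper's actual mechanism is to expand $\DXj^{2i+1}$ in Hermite polynomials, move the weight inside multiple Skorohod integrals via Lemma \ref{lem:Fdelta}, control these by Meyer's inequality \eqref{eq:Meyer}, and reduce everything to the covariance estimates of Lemmas \ref{lem:cov}, \ref{lem:sum_est1} and \ref{lem:aux}; moreover the full-contraction remainder $R_n^h$, involving $\Ip{\widetilde{\varepsilon}_{\frac jn},\partial_{\frac jn}}_{\hten}^{2h+1-2u}$, does not fit this scheme and needs the separate telescoping estimate \eqref{ineq:covpartialepsilon}, which is where the nonstationarity ($\alpha<2\beta$ allowed) genuinely bites. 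You do allude to ``low-order Wiener-chaos projections,'' so the idea is present, but the quantitative heart of the proof --- that all of these terms, at the critical exponent $\alpha=(2\ell+1)^{-1}$, are squeezed through the single bound \eqref{eq:Gtildebound} with its $x^{-1-\delta}$ decay --- is only gestured at, and is precisely what your sketch would have to supply to become a proof.
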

The proof of Theorem \ref{thm1} follows the same path as the proof of Theorem 1.1 of Binotto, Nourdin and Nualart \cite{BNN}, but there are technical challenges that arise because in general $X$ is not stationary. The next generalization of the result in \cite{GNRV} easily follows from the proof of Theorem \ref{thm1}.
\begin{theorem}\label{Cor1}
Under the assumptions of Theorem \ref{thm1}, if $\alpha >\frac{1}{2\ell +1}$, then the $\nu$-symmetric  integral  $\int_0^t f'(X_s)~d^\nu X_s$ exists as the limit in probability of the $\nu$-symmetric Riemann sums $S_n^\nu(f',t)$ and for all $t\ge 0$,  we have
\[
f(X_t) = f(0)+\int_0^t f'(X_s)~d^\nu X_s.
\]
\end{theorem}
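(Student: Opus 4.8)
The plan is to reuse the decomposition that underlies the proof of Theorem~\ref{thm1}, and then exploit the fact that in the supercritical regime $\alpha>\frac{1}{2\ell+1}$ the fluctuation term — which produced the conditionally Gaussian correction in Theorem~\ref{thm1} — instead collapses to $0$ in $L^2$. First I would isolate the telescoping part using the fundamental theorem of calculus applied to the Lebesgue integral: since $\tfrac{d}{dy}f(X_{\frac jn}+y\DXj)=f'(X_{\frac jn}+y\DXj)\DXj$,
\[
\int_0^1 f'\!\big(X_{\tfrac jn}+y\,\DXj\big)\,\DXj\,dy = f(X_{\frac{j+1}{n}})-f(X_{\frac jn}),
\]
so that, writing $R_n(t)$ for the discrepancy between integrating $f'(X_{\frac jn}+y\DXj)\DXj$ against $\nu$ and against Lebesgue measure,
\[
S_n^\nu(f',t) = \big(f(X_{\frac{\Nt}{n}})-f(0)\big) + R_n(t),\qquad
R_n(t)=\sum_{j=0}^{\Nt-1}\left(\int_0^1 f'\!\big(X_{\tfrac jn}+y\,\DXj\big)\big(\nu(dy)-dy\big)\right)\DXj.
\]

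Next I would Taylor-expand $f'$ about the midpoint value $X_{\frac jn}+\tfrac12\DXj$. The symmetry $\nu(A)=\nu(1-A)$ annihilates all odd powers of $(y-\tfrac12)$, and the defining moment conditions of $\ell=\ell(\nu)$ force the centered even moments of $\nu$ and of Lebesgue measure to coincide up to order $2(\ell-1)$; hence the two integrals agree through order $\DXj^{2\ell-1}$ and the first surviving difference is of order $\DXj^{2\ell}$. After multiplying by $\DXj$ and replacing the midpoint value by $X_{\frac jn}$ at the cost of a higher-order error, this gives exactly the decomposition used for Theorem~\ref{thm1},
\[
R_n(t)=\kappa_{\nu,\ell}\sum_{j=0}^{\Nt-1} f^{(2\ell+1)}(X_{\tfrac jn})\,\DXj^{2\ell+1}+E_n(t),
\]
with $\kappa_{\nu,\ell}$ as in \eqref{def:kappa} and $E_n(t)$ collecting the midpoint-replacement error together with the Taylor remainders, all of which involve strictly higher odd powers $\DXj^{2\ell+3},\DXj^{2\ell+5},\dots$.

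The crux is then a second-moment estimate in place of the stable central limit theorem. Using the increment exponent, $\E[\DXj^2]\asymp n^{-\alpha}$ by \eqref{eq:alphdadef}, and the correlation-decay bounds coming from (H.1)--(H.2) that are already established en route to Theorem~\ref{Variations}, the diagonal terms of $\mathrm{Var}(V_n(t))$ are of order $nt\cdot(n^{-\alpha})^{2\ell+1}=t\,n^{1-\alpha(2\ell+1)}$, and the off-diagonal terms are controlled by the same summable correlations, yielding $\E[V_n(t)^2]=O\big(n^{1-\alpha(2\ell+1)}\big)$. Since $1-\alpha(2\ell+1)<0$ precisely when $\alpha>\frac{1}{2\ell+1}$, this gives $V_n(t)\to 0$ in $L^2$; after localizing the weights $f^{(2\ell+1)}(X_{\frac jn})$ (bounded on $[0,t]$ up to a cutoff, using the moderate growth of $f$ and the Gaussian tails of $X$) and applying the same product-formula/Hermite expansion used for Theorem~\ref{Variations}, the weighted sum $\kappa_{\nu,\ell}\sum_j f^{(2\ell+1)}(X_{\frac jn})\DXj^{2\ell+1}\to 0$ in probability; the remainder $E_n(t)$, carrying even more negative powers of $n$, is handled identically. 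Combining this with the pathwise convergence $f(X_{\frac{\Nt}{n}})-f(0)\to f(X_t)-f(0)$ (continuity of $X$ and $f$) gives $S_n^\nu(f',t)\to f(X_t)-f(0)$ in probability for each $t$, which simultaneously defines the $\nu$-symmetric integral as this limit and establishes the correction-free It\^o formula.

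The only genuinely new point beyond Theorem~\ref{thm1} is the bookkeeping of the exponent $1-\alpha(2\ell+1)$ in the variance bound, so the main obstacle is not conceptual but the nonstationarity of $X$: unlike the fBm case, the covariances $\mathrm{Cov}(\DXj^{2\ell+1},\DXk^{2\ell+1})$ cannot be summed via a single stationary spectral density and must instead be bounded through the pointwise estimates \eqref{ec1}--\eqref{ec2} on $\phi'$ and $\phi''$, exactly as in the proof of Theorem~\ref{Variations}. Because those estimates are already available, the supercritical case follows with little additional effort — the strictly negative sign of $1-\alpha(2\ell+1)$ does all the work, upgrading the stable-law statement of Theorem~\ref{thm1} to convergence in probability with a vanishing correction term.
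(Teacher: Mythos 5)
Your telescoping-plus-Taylor setup is sound and essentially re-derives the expansion \eqref{eq:Taylor_expansion} from \cite{GNRV}, but there is a genuine error at the step ``replacing the midpoint value by $X_{\frac jn}$ at the cost of a higher-order error.'' The first-order replacement error is
\[
\frac{\kappa_{\nu,\ell}}{2}\sum_{j=0}^{\Nt-1} f^{(2\ell+2)}(X_{\frac jn})\,\DXj^{2\ell+2}\,+\,\cdots,
\]
an \emph{even}-power weighted variation, not a higher odd power. Since $\E[\DXj^{2\ell+2}]=(2\ell+1)!!\,\xi_{j,n}^{2\ell+2}$ with $\xi_{j,n}^{2}\asymp n^{-2\beta}j^{2\beta-\alpha}$ by \eqref{ecua1}, and the summands have a common sign (no cancellation in expectation), this term has mean of order $n^{1-\alpha(\ell+1)}$, which \emph{diverges} whenever $\frac{1}{2\ell+1}<\alpha<\frac{1}{\ell+1}$ --- a nonempty sub-range of the supercritical regime covered by Theorem \ref{Cor1}. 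Correspondingly, the left-endpoint weighted odd variation $\sum_j f^{(2\ell+1)}(X_{\frac jn})\DXj^{2\ell+1}$ itself blows up at the same rate (for fBm with $\ell=1$ this is the classical fact that left-point cubic sums fail for $\frac16<H<\frac14$ even though the Trapezoidal rule converges). This is precisely why the paper's decomposition \eqref{def:Phi} keeps the derivative evaluated at the midpoint $\widetilde{X}_{\frac jn}$ throughout; your claim that it ``gives exactly the decomposition used for Theorem \ref{thm1}'' with left-endpoint weights misremembers that decomposition, and your $E_n(t)$ does not tend to zero.

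The second gap is the passage from the unweighted bound $\E[V_n(t)^2]=O(n^{1-\alpha(2\ell+1)})$ (which is correct) to the weighted sums. The weights $f^{(2\ell+1)}(\widetilde{X}_{\frac jn})$ are correlated with the increments, so $\E\bigl[w_jw_k\,\DXj^{2\ell+1}\DXk^{2\ell+1}\bigr]$ cannot be bounded by the unweighted covariances times $\sup|w|^2$; the naive Cauchy--Schwarz bound gives $n^{2-\alpha(2\ell+1)}\to\infty$, and the ``product-formula/Hermite expansion used for Theorem \ref{Variations}'' has no mechanism for weights. This is where the paper does its real work: Lemma \ref{lem:Fdelta} converts each weighted Hermite term into multiple Skorohod integrals, Meyer's inequality \eqref{eq:Meyer} controls their $L^p$ norms, the case analysis behind \eqref{eq:Gtildebound} handles the nonstationary covariances, and Lemma \ref{lem:aux}'s telescoping estimate on $\sum_j|\langle\widetilde{\varepsilon}_{\frac jn},\partial_{\frac jn}\rangle_{\hten}|^r$ kills the boundary term $R_n^h$ --- none of which your plan engages. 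The resulting bound \eqref{ineq:Psibound} in Lemma \ref{lem:tight}, with the exponent $\frac p2(1-\alpha(2h+1))$, is exactly what proves Theorem \ref{Cor1} (claims (c) and (d)), so the supercritical case is not ``little additional effort'' beyond exponent bookkeeping. A minor further point: Theorem \ref{thm1} assumes only $f\in\mathcal{C}^{8\ell+2}(\R)$ with no moderate-growth hypothesis, so the localization must proceed, as in the paper, through the events $\{\sup_{0\le s\le T}|X_s|\le L\}$ and a compactly supported modification of $f$, rather than through Gaussian-tail estimates on $f$ itself.
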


\noindent The important new developments compared to previous work are:
\begin{itemize}
\item  A system for constructing  stochastic integrals with respect to rough-path processes, originally developed in \cite{BNN, GNRV, Simpson, NRS} for the fBm, is now extended to a wider class of processes that are not necessarily stationary.

\item We prove a central limit theorem for the power variations of general self-similar Gaussian processes.

\item  We present a more efficient proof of tightness, which allows for less restrictions on the integrand function $f$ compared with \cite{BNN}. 
\end{itemize}

The paper is organized as follows. In Section \ref{sec:notation_and_theory} we present some Malliavin calculus preliminaries. In Section \ref{eq:convergence_variations} we prove the convergence of the variations of the process $X.$ Section \ref{proof_main} is devoted to the proofs of Theorems \ref{thm1} and Theorem \ref{Cor1}. Finally, in Section \ref{Sec:App} we prove some technical lemmas.

\medskip
\section{Preliminaries}\label{sec:notation_and_theory}
In the sequel, $X$ will denote a self-similar Gaussian process of self-similarity exponent $\beta$,  satisfying assumptions (H.1) and (H.2), defined on a probability space $(\Omega,\Fc,\Pb)$, where $\Fc$ is the $\sigma$-algebra generated by $X$. This implies that $X$ has the  increment exponent $\alpha$ and we assume  $0<\alpha<1$, $0<\alpha\leq 2\beta$ and $\beta \le  1/2$. Let $Y$ be the continuous Gaussian process with covariance function \eqref{eq:Sigmadef} introduced in Section \ref{sec:mainresult} and  let $Y^{i}$,  $i\geq 1$, be independent copies of $Y$. We will assume that $Y$ and $Y^{i}$, $i\geq 1$, are defined on an enlarged probability space $(\Omega,\Gc,\Pb)$, with $\Fc\subset\Gc$, and they are independent of $X$.

\subsection{Elements of Malliavin Calculus}\label{Notation}
Following are descriptions of some of the identities and methods to be used in this paper. The reader should refer to the texts \cite{Nualart} or \cite{NoP11} for details.
We will denote by $\hten$ the Hilbert space obtained by taking the completion of the space of step functions endowed with the inner product 
\begin{align*}
\Ip{\Indi{[0,a]},\Indi{[0,b]}}_{\hten}
  &:=  \mathbb{E}\left[X_{a}X_{b}\right],
\end{align*}
$a,b \ge 0$, where $\Indi{B}$ is the indicator function of a Borel set $B$.  The mapping $\Indi{[0,t]} \mapsto X_t$ can be extended to a linear isometry between $\hten$ and a Gaussian subspace of $L^2(\Omega, \cal{F}, \mathbb{P})$. We will denote by $X(h)$ the image of $h\in \hten$ by this isometry.
For any integer $q\in\mathbb N$, we denote by $\hten^{\otimes q}$ and $\hten^{\odot q}$ the $q$th tensor product of $\hten$, and the $q$th symmetric tensor product of $\hten$, respectively. The $q$th Wiener chaos	of $L^{2}(\Omega,\cal{F},\mathbb{P})$, denoted by $\mathcal{H}_{q}$, is the closed subspace of $L^{2}(\Omega,\cal{F},\mathbb{P})$ generated by the  random variables $\{H_{q}(X(h)),~  h\in\hten,\Norm{h}_{\hten}=1\}$, where $H_{q}$ is the $q$th Hermite polynomal, defined by 
\begin{align}\label{eq:xnHermite}
H_{q}(x)
  &:=(-1)^{q}e^{\frac{x^{2}}{2}}\frac{\text{d}^{q}}{\text{d}x^{q}}e^{-\frac{x^{2}}{2}}.
\end{align}
We observe that any monomial of the form $x^{2\ell+1}$, for $\ell\in\N$, can be expressed as a linear combination of odd Hermite polynomials with integer coefficients $c_{j,r}$, namely, 
\begin{align}\label{eq:hermite_expansion}
x^{2r+1}
   &=\sum_{j=0}^{r}c_{j,r}H_{2(r-j)+1}(x).
\end{align}
We will denote by $J_{q}$ the projection over the space $\Hc_{q}$. The mapping 
\beq{hmap} I_{q}(h^{\otimes q}):=H_{q}(X(h)),
\eeq 
defined first for $h\in\hten$ such that $\Norm{h}_{\hten}=1$ and then extended by linearity, provides a linear isometry between $\hten^{\odot q}$ (equipped with the norm $\sqrt{q!}\Norm{\cdot}_{\hten^{\otimes q}}$) and $\mathcal{H}_{q}$ (equipped with the $L^{2}$-norm).  The random variable $I_q(\cdot)$ denotes the generalized Wiener-It\^o stochastic integral.

Let $\{e_{n}\}_{n\geq1}$ be a complete orthonormal system in $\hten$. Given $f\in\hten^{\odot p}$, $g\in\hten^{\odot q}$ and $r\in\{0,\dots, p\wedge q\}$, the $r$th-order contraction of $f$ and $g$ is the element of $\hten^{\otimes (p+q-2r)}$ defined by 
\begin{align*}
f\otimes_{r}g
  &=\sum_{i_{1},\dots, i_{r}=1}^{\infty}\Ip{f,e_{i_{1}}\otimes\cdots\otimes e_{i_{r}}}_{\hten^{\otimes r}}\otimes\Ip{g,e_{i_{1}}\otimes\cdots\otimes e_{i_{r}}}_{\hten^{\otimes r}},
\end{align*}
where $f\otimes_{0}g=f\otimes g$, and for $p=q$, $f\otimes_{q}g=\Ip{f,g}_{\hten^{\otimes q}}$. Let $\mathcal{S}$ denote the set  of all cylindrical random variables of the form
\begin{align*}
F= g(X(h_{1}),\dots, X(h_{n})),
\end{align*} 
where $g:\mathbb{R}^{n}\rightarrow\mathbb{R}$ is an infinitely differentiable function with compact support, and $h_{j}\in\hten$. The Malliavin derivative of $F$ is the element of $L^{2}(\Omega;\hten)$, defined by 
\begin{align*}
DF
  &=\sum_{i=1}^{n}\frac{\partial g}{\partial x_{i}}(X(h_{1}),\dots, X(h_{n}))h_{i}.
\end{align*}
By iteration, one can define the $r$th derivative $D^{r}$ for every $r\geq2$, which is an element of $L^{2}(\Omega;\hten^{\odot r})$. For $p\geq1$ and $r\geq1$, the set $\mathbb{D}^{r,p}$ denotes the closure of $\mathcal{S}$ with respect to the norm $\Norm{\cdot}_{\mathbb{D}^{r,p}}$, defined by
\begin{align*}
\Norm{F}_{\mathbb{D}^{r,p}}
  &:=\left(\mathbb{E}\left[\Abs{F}^{p}\right]+\sum_{i=1}^{r}\mathbb{E}\left[\Norm{D^{i}F}_{\hten^{\otimes i}}^{p}\right]\right)^{\frac{1}{p}}.
\end{align*}
The operator $D^{r}$ can be consistently extended to the space $\mathbb{D}^{r,p}$. We denote by $\delta$ the adjoint of the operator $D$, also called the divergence operator. A random element $u\in L^{2}(\Omega;\hten)$ belongs to the domain of $\delta$ in $L^2(\Omega)$, denoted by $\mathrm{Dom} \, \delta $, if and only if satisfies
\begin{align*}
\Abs{\mathbb{E}\left[\Ip{DF,u}_{\hten}\right]}
  &\leq C_{u}\mathbb{E}\left[F^{2}\right]^{\frac{1}{2}},\ \text{ for every } F\in\mathbb{D}^{1,2},
\end{align*}
where $C_{u}$ is  a constant only depending on $u$. If $u\in \mathrm{Dom} \,\delta$, then the random variable $\delta(u)$ is defined by the duality relationship
\begin{align*}
\mathbb{E}\left[F\delta(u)\right]=\mathbb{E}\left[\Ip{DF,u}_{\hten}\right],
\end{align*}
which holds for every $F\in\mathbb{D}^{1,2}$. The previous relation extends to the multiple Skorohod integral $\delta^{q}$, and we have 
\begin{align*}
\mathbb{E}\left[F\delta^{q}(u)\right]
  &=\mathbb{E}\left[\Ip{D^{q}F,u}_{\hten^{\otimes q}}\right],
\end{align*}
for any element $u$ in the domain of $\delta^{q}$, denoted by ${\rm Dom} \, \delta^q$, and any random variable $F\in\mathbb{D}^{q,2}$. Moreover, $\delta^{q}(h)=I_{q}(h)$ for every $h\in\hten^{\odot q}$. For any Hilbert space $V$, we denote by $\mathbb{D}^{k,p}(V)$ the corresponding Sobolev space of $V$-valued random variables (see \cite[page 31]{Nualart}). The operator $\delta^{q}$ is continuous from $\mathbb{D}^{k,p}(\hten^{\otimes q})$ to $\mathbb{D}^{k-q,p}$, for any $p>1$ and any integers $k\geq q\geq1$, that is, we have 
\begin{align}\label{eq:Meyer}
\Norm{\delta^{q}(u)}_{\mathbb{D}^{k-q,p}}
  &\leq c_{k,p}\Norm{u}_{\mathbb{D}^{k,p}(\hten^{\otimes q})}
\end{align}
for all $u\in\mathbb{D}^{k,p}(\hten^{\otimes q})$, and some constant $c_{k,p}>0$. These estimates are consequences of Meyer inequalities (see \cite[Proposition 1.5.7]{Nualart}). In particular, these estimates imply that $\mathbb{D}^{q,2}(\hten^{\otimes q})\subset \mathrm{Dom}~\delta^{q}$ for any integer $q\geq1$. 
The following lemma has been proved in \cite[Lemma 2.1]{NoNu}:
\begin{lemma}\label{lem:Fdelta}
Let $q\geq1$ be an integer. Suppose that $F\in\mathbb{D}^{q,2}$, and let $u$ be a symmetric element in ${\rm Dom} \,\delta^{q}$. Assume that, for any $0\leq r+j\leq q$, $\Ip{D^{r}F,\delta^{j}(u)}_{\hten^{\otimes r}}\in L^{2}(\Omega;\hten^{\otimes q-r-j})$. Then, for any $r=0,\dots, q-1$, $\Ip{D^{r}F,u}_{\hten^{\otimes r}}$ belongs to the domain of $\delta^{q-r}$ and we have 
\begin{align*} 
F\delta^{q}(u)
  &:=\sum_{r=0}^{q}\binom{q}{r}\delta^{q-r}(\Ip{D^{r}F,u}_{\hten^{\otimes r}}).
\end{align*}
\end{lemma}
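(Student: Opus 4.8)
The plan is to argue by induction on $q$, treating the asserted identity as the $q$-fold analogue of the elementary first-order product rule for the divergence and propagating it by duality. First I would dispose of the base case $q=1$. Specializing the hypothesis to $q=1$ gives $Fu\in L^{2}(\Omega;\hten)$ (the case $r=0,j=0$) and $\Ip{DF,u}_{\hten}\in L^{2}(\Omega)$ (the case $r=1,j=0$), and the classical identity $F\delta(u)=\delta(Fu)+\Ip{DF,u}_{\hten}$ (see \cite{Nualart}) is exactly $\sum_{r=0}^{1}\binom{1}{r}\delta^{1-r}(\Ip{D^{r}F,u}_{\hten^{\otimes r}})$; in particular it already shows $Fu=\Ip{D^{0}F,u}\in\mathrm{Dom}\,\delta$.

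For the inductive step, assuming the statement for $q-1$, the cleanest route is to test against an arbitrary smooth cylindrical $G\in\mathcal{S}$ and evaluate $\E[GF\delta^{q}(u)]$ in two ways. Since $GF\in\mathbb{D}^{q,2}$, the duality relationship for the multiple Skorohod integral gives $\E[(GF)\delta^{q}(u)]=\E[\Ip{D^{q}(GF),u}_{\hten^{\otimes q}}]$. On the other side, the Leibniz rule for the iterated Malliavin derivative yields $D^{q}(GF)=\sum_{r=0}^{q}\binom{q}{r}\,D^{q-r}G\,\widetilde{\otimes}\,D^{r}F$, where $\widetilde{\otimes}$ denotes the symmetrized tensor product. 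Because $u$ is symmetric, the symmetrization may be dropped when pairing against $u$, and contracting the factor $D^{r}F$ against $r$ of the slots of $u$ rewrites each summand as $\Ip{D^{q-r}G,\Ip{D^{r}F,u}_{\hten^{\otimes r}}}_{\hten^{\otimes(q-r)}}$. Applying the duality relationship for $\delta^{q-r}$ to each term then identifies $\E[GF\delta^{q}(u)]$ with $\E[G\sum_{r=0}^{q}\binom{q}{r}\delta^{q-r}(\Ip{D^{r}F,u}_{\hten^{\otimes r}})]$, and since $G$ runs over a dense family the asserted identity follows.

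The delicate point, and what I expect to be the main obstacle, is not this formal algebra but the bookkeeping of domains and integrability that legitimizes it: to invoke the duality relationship for $\delta^{q-r}$ one must first know that each contraction $\Ip{D^{r}F,u}_{\hten^{\otimes r}}$ lies in $\mathrm{Dom}\,\delta^{q-r}$. This is precisely where the hypothesis $\Ip{D^{r}F,\delta^{j}(u)}_{\hten^{\otimes r}}\in L^{2}(\Omega;\hten^{\otimes(q-r-j)})$ for all $0\le r+j\le q$ enters. I would establish these memberships inductively, using the recursion $\delta^{q}=\delta\circ\delta^{q-1}$: rearranging the order-$(q-1)$ identity expresses the divergence of each lower-order contraction in terms of $F\delta^{q-1}(\cdot)$ and of terms already under control, and the stated $L^{2}$ bounds guarantee that every such term is square-integrable, so that $\Ip{D^{r}F,u}_{\hten^{\otimes r}}\in\mathrm{Dom}\,\delta^{q-r}$ as needed. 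Once these domain conditions are in hand the duality computation above is rigorous and the induction closes; the remaining ingredients, namely the Leibniz formula for $D^{q}$ and the symmetrization/contraction identities, are routine, with the only care required being to track which tensor slots are contracted, a point that is resolved at each stage by the symmetry of $u$.
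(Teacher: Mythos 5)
The paper itself offers no proof of this lemma: it is quoted from Nourdin and Nualart \cite{NoNu}, where it appears as Lemma 2.1 and is proved by induction on $q$. Your overall architecture --- base case $q=1$ from the classical identity $F\delta(u)=\delta(Fu)+\Ip{DF,u}_{\hten}$, then duality against smooth cylindrical $G$, the Leibniz rule $D^{q}(GF)=\sum_{r=0}^{q}\binom{q}{r}D^{q-r}G\,\widetilde{\otimes}\,D^{r}F$, and the symmetry of $u$ to drop the symmetrization --- is the standard route and correctly reduces everything to the domain assertions $\Ip{D^{r}F,u}_{\hten^{\otimes r}}\in\mathrm{Dom}\,\delta^{q-r}$; you are also right that this is where the hypothesis on all pairs $(r,j)$ must enter. (Note in passing that for $\Ip{D^{r}F,\delta^{j}(u)}_{\hten^{\otimes r}}$ even to make sense, the hypothesis implicitly contains that $u$ lies in the domain of $\delta^{j}$ for every $j\le q$ in the vector-valued sense; this is what licenses your appeal to the recursion $\delta^{q}=\delta\circ\delta^{q-1}$ and to vector-valued duality, and it deserves to be said explicitly.)

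The genuine gap is in how you propose to obtain those memberships. ``Rearranging the order-$(q-1)$ identity'' can only control the binomial combination $\sum_{r=0}^{q-1}\binom{q-1}{r}\delta^{q-1-r}(\Ip{D^{r}F,u}_{\hten^{\otimes r}})$, which equals $F\delta^{q-1}(u)$, or the divergence of that sum; a single identity cannot be rearranged to isolate the divergence of each individual contraction, so this step fails as written. The repair is to prove membership term by term, by induction on the number $q-r$ of remaining divergences: a first-order integration by parts (Leibniz applied to $D(G\,D^{r}F)$ together with duality in the one contracted slot) gives, for smooth cylindrical $G$, $\E\left[\Ip{DG,\Ip{D^{r}F,u}_{\hten^{\otimes r}}}_{\hten}\right]=\E\left[G\left(\Ip{D^{r}F,\delta(u)}_{\hten^{\otimes r}}-\Ip{D^{r+1}F,u}_{\hten^{\otimes r+1}}\right)\right]$, whence $\Ip{D^{r}F,u}_{\hten^{\otimes r}}\in\mathrm{Dom}\,\delta$ with $\delta(\Ip{D^{r}F,u}_{\hten^{\otimes r}})=\Ip{D^{r}F,\delta(u)}_{\hten^{\otimes r}}-\Ip{D^{r+1}F,u}_{\hten^{\otimes r+1}}$; iterating yields $\delta^{q-r}(\Ip{D^{r}F,u}_{\hten^{\otimes r}})=\sum_{i=0}^{q-r}(-1)^{i}\binom{q-r}{i}\Ip{D^{r+i}F,\delta^{q-r-i}(u)}_{\hten^{\otimes (r+i)}}$, and each summand is square-integrable precisely by the hypothesis --- this is exactly why the hypothesis quantifies over all $(r,j)$ with $r+j\le q$, a point your sketch uses only loosely. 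With these memberships in hand your duality computation is rigorous; alternatively, summing the displayed formula against $\binom{q}{r}$ and using $\sum_{r+i=m}(-1)^{i}\binom{q}{r}\binom{q-r}{i}=\binom{q}{m}\sum_{i=0}^{m}(-1)^{i}\binom{m}{i}=\Ind{m=0}$ collapses the double sum directly to $F\delta^{q}(u)$, recovering the Pascal's-rule bookkeeping of the original proof in \cite{NoNu}.
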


\subsection{Central limit theorems for multiple stochastic integrals}
In the seminal paper \cite{NuPe}, Nualart and Peccati established a central limit theorem for sequences of multiple stochastic integrals of fixed order. In this context, assuming that the variances converge, convergence in distribution to a Gaussian law is actually equivalent to the convergence of just the fourth moment. Shortly afterwards, in \cite{PeTu}, Peccati and Tudor proved a multidimensional version of this result. More recent developments of these type of results have been addressed by using Stein's method and Malliavin calculus (see the monograph by Nourdin and Peccati \cite{NoP11}).

The following result is a version of the multidimensional limit central theorem for multiple stochastic integrals, obtained by Peccati and Tudor in \cite{PeTu}.

\begin{theorem}[Peccati-Tudor criterion]\label{thm:PeTu}
Let $d\geq 2$ and let $q_{1},\dots, q_{d}\geq1$ be some fixed integers. Consider vectors
\begin{align*}
F_{n}=(F_{n}^{1},\dots, F_{n}^{d})=(I_{q_{1}}(h_{n}^{1}),\dots, I_{q_{d}}(h_{n}^{d})),\ \ \ \ \ n\geq1,
\end{align*}
with $h_{n}^{i}\in\Hg^{\otimes q_{i}}$. Let $C=\{C_{i,j}\}_{1\leq i,j\leq d}$ be a $d\times d$ symmetric non-negative definite matrix. Assume that, as $n\rightarrow\infty$, the following conditions hold:
\begin{enumerate}
\item[(i)] For every $i,j=1,\dots, d$,
\begin{align*}
\lim_{n\rightarrow\infty}\E\left[F_{n}^{i}F_{n}^{j}\right]
  &=C_{i,j}.
\end{align*}
\item[(ii)] For all $1\leq i\leq d$ such that $q_{i}>1$ and $1\leq \ell\leq q_{i}-1$, 
$$\Norm{h_{n}^{i}\otimes_{\ell}h_{n}^{i}}_{\Hg^{\otimes 2(q_{i}-\ell)}}\rightarrow0,\ \ \ \text{as }\ \ n\rightarrow\infty.$$
\end{enumerate}
Then $F_{n}$ converges in law to a centered Gaussian law with covariance $C$.
\end{theorem}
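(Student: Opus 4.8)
The plan is to prove this multidimensional central limit theorem by the Malliavin--Stein method, reducing the joint convergence to the $L^2(\Omega)$-convergence of the Malliavin inner products $\Ip{DF_n^i,DF_n^j}_{\Hg}$. First I would invoke the multivariate normal-approximation bound: if $N$ denotes a centered Gaussian vector with covariance $C$, then for any test function $g\in\mathcal{C}^3(\R^d)$ with bounded second and third derivatives the interpolation $t\mapsto\E[g(\sqrt t\,F_n+\sqrt{1-t}\,N)]$ yields
\[
\Abs{\E[g(F_n)]-\E[g(N)]}\le c_g\sum_{i,j=1}^d\E\left[\Abs{C_{ij}-\tfrac{1}{q_j}\Ip{DF_n^i,DF_n^j}_{\Hg}}\right],
\]
where I have used $-L^{-1}I_q(h)=\tfrac1q I_q(h)$, with $L$ the Ornstein--Uhlenbeck generator, so that $-DL^{-1}F_n^j=\tfrac1{q_j}DF_n^j$. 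Since the class $\mathcal{C}^3$ with bounded derivatives is distribution-determining (and this route needs no invertibility of $C$, so the merely non-negative definite case is covered), it suffices to prove $\tfrac1{q_j}\Ip{DF_n^i,DF_n^j}_{\Hg}\to C_{ij}$ in $L^2(\Omega)$ for every pair $(i,j)$.

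Second, I would expand this inner product into Wiener chaos. Since $DI_q(h)=qI_{q-1}(h)$, the product formula for multiple integrals gives
\[
\tfrac1{q_j}\Ip{DF_n^i,DF_n^j}_{\Hg}=\sum_{r=0}^{(q_i\wedge q_j)-1} a_r\, I_{q_i+q_j-2-2r}\!\left(h_n^i\,\widetilde{\otimes}_{r+1}\,h_n^j\right),
\]
for explicit combinatorial constants $a_r$. The lowest-order summand is a constant, equal to the full expectation $\tfrac1{q_j}\E[\Ip{DF_n^i,DF_n^j}_{\Hg}]=\E[F_n^iF_n^j]=C_{ij}^{(n)}$, which converges to $C_{ij}$ by hypothesis (i); this term is present only when $q_i=q_j$, and otherwise $C_{ij}=0$ and no constant appears. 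Each remaining summand is a multiple integral of strictly positive order, and by the isometry its $L^2(\Omega)$-norm is bounded, up to the constants $a_r$, by a cross-contraction norm $\Norm{h_n^i\otimes_s h_n^j}_{\Hg^{\otimes(q_i+q_j-2s)}}$ with $1\le s\le (q_i\wedge q_j)-1$. Thus everything reduces to showing that these cross-contraction norms tend to $0$.

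Third, and this is the main obstacle, I must pass from the component-wise hypothesis (ii), which controls only the self-contractions $\Norm{h_n^i\otimes_\ell h_n^i}$, to the cross-contractions with $i\ne j$. The device is the contraction identity followed by Cauchy--Schwarz,
\[
\Norm{h_n^i\otimes_s h_n^j}_{\Hg^{\otimes(q_i+q_j-2s)}}^2=\Ip{h_n^i\otimes_{q_i-s}h_n^i,\;h_n^j\otimes_{q_j-s}h_n^j}_{\Hg^{\otimes 2s}}\le\Norm{h_n^i\otimes_{q_i-s}h_n^i}\,\Norm{h_n^j\otimes_{q_j-s}h_n^j}.
\]
For $1\le s\le (q_i\wedge q_j)-1$ the indices $q_i-s$ and $q_j-s$ lie in $\{1,\dots,q_i-1\}$ and $\{1,\dots,q_j-1\}$ respectively, so both factors on the right are exactly the self-contraction norms driven to zero by hypothesis (ii); the borderline cases $q_i=1$ or $q_j=1$ are handled directly by the same inequality, since then $DF_n^i$ or $DF_n^j$ is deterministic and $\Norm{h_n^i}^2=\E[(F_n^i)^2]$ stays bounded by (i). Consequently every positive-order summand vanishes in $L^2(\Omega)$, whence $\tfrac1{q_j}\Ip{DF_n^i,DF_n^j}_{\Hg}\to C_{ij}$, the Stein bound tends to $0$, and $F_n\law N(0,C)$.

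Finally, I would note an alternative route following the original argument of Peccati and Tudor: by the Nualart--Peccati fourth-moment theorem, hypothesis (ii) already forces each individual component $F_n^i$ to converge to a one-dimensional Gaussian, and one then upgrades this to joint convergence by checking that all mixed cumulants converge to those of $N(0,C)$ via the diagram/moment formula, again using (i) for the second cumulants and the cross-contraction estimate above to annihilate the higher ones. I expect the Malliavin--Stein route to be the cleaner of the two, with the Cauchy--Schwarz contraction bound of the third paragraph being the single genuinely nontrivial ingredient.
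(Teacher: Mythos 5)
The paper itself offers no proof of this statement: it is imported verbatim from Peccati and Tudor \cite{PeTu} and used as a black box (to prove Theorem \ref{thm:NNCLT}), so there is no internal argument to compare yours against line by line. That said, your Malliavin--Stein interpolation route is the now-standard proof of this criterion (essentially the one in the Nourdin--Peccati monograph \cite{NoP11}), and its architecture is sound: the interpolation bound with $-DL^{-1}I_{q}(h)=q^{-1}DI_{q}(h)$, the chaos expansion of $\Ip{DF_{n}^{i},DF_{n}^{j}}_{\Hg}$ whose constant term is $\E[F_{n}^{i}F_{n}^{j}]$, the observation that no invertibility of $C$ is needed, and the reduction of cross-contractions to self-contractions via $\Norm{h_{n}^{i}\otimes_{s}h_{n}^{j}}^{2}=\Ip{h_{n}^{i}\otimes_{q_{i}-s}h_{n}^{i},\,h_{n}^{j}\otimes_{q_{j}-s}h_{n}^{j}}$ followed by Cauchy--Schwarz are all correct. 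For the record, the original proof in \cite{PeTu} went through stochastic calculus (time-changed Brownian martingales), so your route is genuinely different from the cited source: more elementary, quantitative, and self-contained within Malliavin calculus.

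One indexing slip needs repair. By your own product-formula expansion, the contraction index $s=r+1$ runs over $1,\dots,q_{i}\wedge q_{j}$; when $q_{i}\neq q_{j}$ the top term $s=q_{i}\wedge q_{j}$ is a multiple integral of strictly positive order $|q_{i}-q_{j}|$, so your claim that every positive-order summand carries a cross-contraction with $1\le s\le(q_{i}\wedge q_{j})-1$ misses it. Your ``borderline'' remark covers only $q_{i}=1$ or $q_{j}=1$, whereas the phenomenon occurs whenever $q_{i}\neq q_{j}$ (e.g.\ $q_{i}=2$, $q_{j}=5$, $s=2$); note this is precisely the configuration arising in the paper's application, where the orders $q_{1}<\cdots<q_{d}$ are pairwise distinct and first-chaos components are adjoined. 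The fix is the same device you already used: for $q_{i}<q_{j}$ and $s=q_{i}$,
\begin{align*}
\Norm{h_{n}^{i}\otimes_{q_{i}}h_{n}^{j}}_{\Hg^{\otimes(q_{j}-q_{i})}}^{2}
=\Ip{h_{n}^{i}\otimes_{0}h_{n}^{i},\,h_{n}^{j}\otimes_{q_{j}-q_{i}}h_{n}^{j}}_{\Hg^{\otimes 2q_{i}}}
\le\Norm{h_{n}^{i}}_{\Hg^{\otimes q_{i}}}^{2}\Norm{h_{n}^{j}\otimes_{q_{j}-q_{i}}h_{n}^{j}}_{\Hg^{\otimes 2q_{i}}},
\end{align*}
where $q_{j}-q_{i}\in\{1,\dots,q_{j}-1\}$, so the second factor vanishes by hypothesis (ii) while $\Norm{h_{n}^{i}}^{2}=\E[(F_{n}^{i})^{2}]/q_{i}!$ stays bounded by hypothesis (i). With this one amendment your argument is complete and correct.
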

We will need the following modification of the Peccati-Tudor criterion, in which we will make use of the notation introduced in Sections \ref{Sec:intro} and \ref{sec:notation_and_theory}.
 
\begin{theorem}\label{thm:NNCLT}
Let $1<q_1<q_{2}<\dots< q_d$ be positive integers.  Consider a sequence of stochastic processes $F_n^{i}=\{F_{n}^{i}(t)\}_{t\geq0}$ of the form $F_n^{i}(t) = I_{q_i} (h_n^i(t))$, where each $h_n^i(t)$ is an element of ${\Hg}^{\otimes {q_i}}$ and $1\leq i\leq d$. Suppose in addition,	that the following conditions hold for every $t\geq0$ and $1\leq i\leq d$:
\begin{enumerate}
\item[(i)]  There exist $c_{1},\dots, c_{d}>0$, such that  for every $s,t\ge 0$ 
\begin{align}\label{conv:hninner}
\lim_{n\rightarrow\infty}\Ip{h_n^i(s),h_n^i(t)}_{{\Hg}^{\otimes {q_i}}}= \frac{c_{i}^2}{q_{i}!}\Sigma(s,t).
\end{align}
\item[(ii)]  For all $i=1,\dots, d$ and $r=1,\dots, q_{i}-1$,
\begin{align}\label{conv:contractionh}
\lim_{n\rightarrow\infty}\Norm{h_n^i(t)\otimes_{r}h_n^i(t)}_{\Hg^{\otimes 2(q_{i}-r)}}=0.
\end{align}
\end{enumerate}
Then the finite dimensional distributions of the process $\sum_{i=1}^{d}F_n^{i}$ converge stably to those of $\sum_{i=1}^{d} \sqrt{q_i!} c_{i}Y^{i}$.
\end{theorem}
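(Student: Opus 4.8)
The plan is to reduce Theorem~\ref{thm:NNCLT} to the multidimensional Peccati--Tudor criterion (Theorem~\ref{thm:PeTu}) by first establishing the joint convergence of the finite-dimensional distributions, then upgrading ordinary convergence in law to \emph{stable} convergence, and finally identifying the limit as $\sum_{i=1}^d\sqrt{q_i!}\,c_i Y^i$. First I would fix a finite set of time points $0\le t_1<\dots<t_m$ and consider the $(d\cdot m)$-dimensional vector whose entries are the $F_n^i(t_k)=I_{q_i}(h_n^i(t_k))$. By hypothesis (i), for each fixed pair of indices the inner products $\Ip{h_n^i(s),h_n^i(t)}_{\Hg^{\otimes q_i}}$ converge to $\frac{c_i^2}{q_i!}\Sigma(s,t)$; multiplying by $q_i!$ gives the limiting covariance of $F_n^i(s)$ and $F_n^i(t)$. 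Because the orders $q_1<\dots<q_d$ are \emph{distinct}, multiple integrals of different orders are orthogonal in $L^2(\Omega)$, so the cross-covariances $\E[F_n^i(s)F_n^j(t)]$ for $i\neq j$ vanish identically for every $n$; hence the limiting covariance matrix $C$ is block diagonal with blocks $c_i^2\,\Sigma(t_k,t_{k'})$. This verifies condition (i) of Theorem~\ref{thm:PeTu}. Condition (ii) of Theorem~\ref{thm:PeTu} is exactly my hypothesis (ii), the vanishing of the contractions $\Norm{h_n^i(t)\otimes_r h_n^i(t)}$; a short argument (Cauchy--Schwarz on the contraction norms, or monotonicity in the time variable) extends this from a single $t$ to all the time points $t_k$ simultaneously. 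Thus Peccati--Tudor yields convergence in law of the vector to a centered Gaussian with covariance $C$.

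Next I would identify this Gaussian limit with the stated process. Since $C$ is block diagonal and the $i$th block equals $c_i^2\Sigma(t_k,t_{k'})=c_i^2\E[Y^i_{t_k}Y^i_{t_{k'}}]$, the limiting Gaussian vector has the same law as $\big(\sqrt{q_i!}\,c_i Y^i_{t_k}\big)_{i,k}$, using the independence of the copies $Y^1,\dots,Y^d$ to match the block structure. Summing over $i$ shows the finite-dimensional distributions of $\sum_{i=1}^d F_n^i$ converge in law to those of $\sum_{i=1}^d\sqrt{q_i!}\,c_i Y^i$.

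The genuinely nontrivial part is promoting convergence in law to \emph{stable} convergence in the sense of Definition~\ref{def:stable}, i.e. showing $\E[f(\xi_n)M]\to\E[f(\xi)M]$ for every bounded $\Fc$-measurable $M$, since the $F_n^i$ live on $(\Omega,\Fc,\Pb)$ while the limit $Y^i$ lives on the enlargement and must be \emph{independent} of $\Fc$. The standard route, which I would follow, is to show joint convergence of the pair $(F_n,Z)$ where $Z$ is \emph{any} fixed finite vector of the form $(X(g_1),\dots,X(g_p))$ generating a dense enough sub-$\sigma$-algebra of $\Fc$: one augments the vector in the Peccati--Tudor computation with the first-chaos elements $X(g_\ell)=I_1(g_\ell)$ and checks that the additional cross-covariances $\E[F_n^i(t)X(g_\ell)]=\Ip{h_n^i(t),g_\ell}_{\Hg}\cdot\mathbbm{1}_{\{q_i=1\}}=0$ vanish because $q_i>1$ for all $i$. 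This forces the Gaussian limit of $F_n$ to be \emph{independent} of the limit of $Z$, which is exactly $Z$ itself; the vanishing of these cross-covariances is the structural reason the limiting $Y^i$ are independent of $X$, and it is precisely where the hypothesis $q_i>1$ is used. A monotone-class / density argument in $M$ then upgrades this to all bounded $\Fc$-measurable $M$, giving stable convergence. I expect the main obstacle to be the careful bookkeeping in this last step: verifying that the asymptotic independence from the whole $\sigma$-algebra $\Fc$ follows from asymptotic independence from a generating family, and handling the joint fourth-moment/contraction conditions for the augmented vector uniformly over the chosen test functions.
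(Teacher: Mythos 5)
Your proposal is correct and follows essentially the same route as the paper: augment the vector $(F_n^i(t_k))$ with first-chaos elements $X(f_1),\dots,X(f_N)$, apply the Peccati--Tudor criterion (Theorem \ref{thm:PeTu}) using hypothesis (ii) for the contractions and hypothesis (i) plus the vanishing cross-covariances (forced by $q_i>1$) for the covariance matrix, and conclude stable convergence since $\Fc$ is generated by $X$. The only cosmetic difference is that no extension argument over the time points is needed, since hypothesis (ii) is assumed for every $t\ge 0$, and the paper leaves the final density/monotone-class step implicit where you spell it out.
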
 
\begin{proof}
Let $t_{1},\dots, t_{m}\ge 0$ be fixed and consider the sequence of random vectors
$$F_{n}=(F_{n}^{i}(t_{j});\ 1\leq i\leq d,\ 1\leq j\leq m).$$
It suffices to show that for any $f_{1},\dots, f_{N}\in\Hg$, the random vectors 
$$\xi_{n}=(F_{n},X(f_{1}),\dots, X(f_{N}))$$ 
converge in  distribution to the Gaussian vector 
$$\xi =(F,X(f_{1}),\dots X(f_{N})),$$
where $X(h)$, for $h\in\Hg$, is defined as in Section \ref{Notation}, $F$ is given by
\begin{align*}
F
  &=(\sqrt{q_i!}c_{i}Y^{i}(t_{j});\ 1\leq i\leq d,\ 1\leq j\leq m),
\end{align*}
and $Y^{1},\dots, Y^{d}$ are the Gaussian processes defined in Section \ref{sec:mainresult}. Notice that in particular $X(f_{i})$ belongs to the chaos of order 1. 

To prove the result we will verify the conditions of Theorem \ref{thm:PeTu}, for $C$ defined as the covariance matrix of $\xi$. Condition \textit{(ii)} follows from \eqref{conv:contractionh}. To prove the convergence of the covariances we proceed as follows. By \eqref{conv:hninner}, the covariance matrix of $F_{n}$ converges to the covariance of $(\sqrt{q_i}c_{i}Y^{i}(t_{j});\ 1\leq i\leq d,\ 1\leq j\leq m)$. In addition, since $q_{i}>1$ for all $1\leq i\leq d$, we have that for every $1\leq  j\leq m$ and $1\leq l\leq N$,
\begin{align*}
\lim_{n\rightarrow\infty}\E\left[F_{n}^{i}(t_{j})X(f_{l})\right]
  &=\E\left[Y^{i}(t_{j})X(f_{l})\right]=0.
\end{align*}
From here, it follows that the covariance of $\xi_{n}$ converges to the covariance of $\xi$, as required. The proof is now complete.
\end{proof}


\subsection{Notation}\label{Notationextra}
 For $n \ge 2$ we consider the discretization of $[0,\infty)$ by the points $\{ \frac{j}{n}, j\ge 0\}$.  For $t  \ge 0$, $j\ge 0$ and $n\ge 2$,  we define:
 \[
\varepsilon_t = \Indi{[0,t)},\quad \widetilde{\varepsilon}_{\frac jn} = \frac 12\left(\varepsilon_{\frac jn} + \varepsilon_{\frac{j+1}{n}}\right) \;\text{and }\;\partial_{\frac jn} = \Indi{[\frac jn , \frac{j+1}{n})}.
\]
For the process $X$, we  introduce the notation:
\[ \Delta X_{\frac{t}{n}} = X_{\frac{t+1}{n}}-X_{\frac tn};\quad \widetilde{X}_{\frac tn} = \frac 12\left( X_{\frac{t+1}{n}}+X_{\frac tn}\right)\;\text{ and }\; \xi_{t,n} = \| \Delta X_{\frac{t}{n}}\|_{L^2(\Omega)} .
\]

When not otherwise defined, the symbol $C$ denotes a generic positive constant, which may change from line to line.  The value of $C$ may depend on  the parameters of the process $X$ and the length of the time interval  $[0,t]$ or $[0,T]$  we are considering.

 
\section{Asymptotic behavior of the power variations of $X$}\label{eq:convergence_variations}
This section is devoted to the proof of Theorem \ref{Variations}.  
Define $V_{n}(t)$ by \eqref{eq:Vndef} and recall that $\alpha =\frac 1{2\ell +1}$. By the Hermite polynomial expansion of $x^{2\ell+1}$ (see \req{eq:hermite_expansion}) and \req{hmap}, we can write
\[ \frac{\DXj^{2\ell +1}}{\xi_{j,n}^{2\ell+1}} = \sum_{r=0}^{\ell} c_{r,\ell}~H_{2(\ell-r)+1}\left(\frac{\DXj}{\xi_{j,n}}\right) = \sum_{r=0}^\ell c_{r,\ell}~I_{2(\ell-r)+1}\left(\frac{\partial_{\frac jn}^{\otimes 2(\ell-r)+1}}{\xi_{j,n}^{2(\ell-r)+1}}\right),\]
where each $c_{r,\ell}$ is an integer with $c_{0,\ell}=1$. It follows that
\[ \DXj^{2\ell+1} =  \sum_{r=0}^\ell c_{r,\ell}~\xi_{j,n}^{2r} I_{2(\ell-r)+1}\left( \partial_{\frac jn}^{\otimes 2(\ell-r)+1}\right).
\]
Define $q_r = 2(\ell-r)+1$ and notice that $q_{\ell} = 1$ and $3 = q_{\ell -1} < \cdots < q_0 = 2\ell +1$.  We can write for $t \ge 0$
\begin{equation}\label{V_decomp}
V_n(t) = \sum_{r=0}^\ell c_{r,\ell}V_n^r(t),
\end{equation}
where
\[
V_n^r(t) = \sum_{j=0}^{\Nt -1} \xi_{j,n}^{2r} I_{q_r}(\partial_{\frac jn}^{\otimes q_r}) = I_{q_r} (h^{r}_n(t)),
\]
and 
\[ h_n^{r}(t) = \sum_{j=0}^{\Nt -1} \xi^{2r}_{j,n} \partial_{\frac jn}^{\otimes q_r}.
\]
In the next lemma, we show that  the term $V_n^\ell(t)$  does not contribute to the limit of $V_n(t)$ as $n$ tends to infinity.

\begin{lemma}
The term
\[ V_n^\ell(t) =  \sum_{j=0}^{\Nt -1} \xi_{j,n}^{2\ell} I_1(\partial_{\frac jn}) =  \sum_{j=0}^{\Nt -1} \xi_{j,n}^{2\ell} \DXj\]
tends to zero in $L^2(\Omega)$ as $n$ tends to infinity.
\end{lemma}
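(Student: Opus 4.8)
The plan is to expose the cancellation that makes $V_n^\ell(t)=\sum_{j=0}^{\Nt-1}\xi_{j,n}^{2\ell}\DXj$ vanish in the limit, since a naive second-moment estimate is doomed. Indeed, $\E[(V_n^\ell(t))^2]=\sum_{j,k}\xi_{j,n}^{2\ell}\xi_{k,n}^{2\ell}\,\E[\DXj\DXk]$, and using only the increment-exponent upper bound $\xi_{j,n}^2\le c_2 n^{-\alpha}$ (valid for all $j/n\le T$ by the remark following (H.2)) together with $\alpha(2\ell+1)=1$, the diagonal alone is already of order $\Nt\,n^{-\alpha(2\ell+1)}\asymp t$. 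So one cannot conclude without using the sign cancellations between increments, and the cleanest way to realize them is an Abel summation of the telescoping increments $\DXj=X_{\frac{j+1}{n}}-X_{\frac jn}$.

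Writing $b_{j,n}=\xi_{j,n}^{2\ell}$ and $N=\Nt$, summing by parts and using $X_0=0$ gives
\begin{align*}
V_n^\ell(t)=b_{N-1,n}\,X_{\frac Nn}-\sum_{j=1}^{N-1}(b_{j,n}-b_{j-1,n})\,X_{\frac jn}.
\end{align*}
Since $\|X_{\frac jn}\|_{L^2(\Omega)}^2=(j/n)^{2\beta}\phi(1)\le C$ for $j/n\le T$, Minkowski's inequality reduces everything to two estimates. The boundary term satisfies $b_{N-1,n}\|X_{\frac Nn}\|_{L^2}\le C n^{-\alpha\ell}\to0$ because $\ell\ge1$. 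The remaining sum is controlled by the total variation of $j\mapsto\xi_{j,n}^{2\ell}$: applying $|a^\ell-b^\ell|\le \ell\max(a,b)^{\ell-1}|a-b|$ with $a,b\le c_2n^{-\alpha}$,
\begin{align*}
\sum_{j=1}^{N-1}\Abs{b_{j,n}-b_{j-1,n}}\,\|X_{\frac jn}\|_{L^2}\le C n^{-\alpha(\ell-1)}\sum_{j=1}^{N-1}\Abs{\xi_{j,n}^2-\xi_{j-1,n}^2}.
\end{align*}

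For the total variation of $\xi_{j,n}^2$ I would pass to scale $1$ by self-similarity, writing $\xi_{j,n}^2=n^{-2\beta}v_j$ with $v_j:=\E[(X_{j+1}-X_j)^2]=\phi(1)\big[(j+1)^{2\beta}+j^{2\beta}\big]-2j^{2\beta}\phi(1+1/j)$, a sequence independent of $n$. A Taylor expansion of $\phi$ at $1$, using $\psi(x)=\phi(x)+\lambda(x-1)^\alpha$ from (H.1) and the bounds \eqref{ec1} and \eqref{ec2} on $\phi''$ and $\phi'$, yields $v_j=2\lambda j^{2\beta-\alpha}+O(j^{2\beta-1})$ and, upon differencing, $|v_j-v_{j-1}|\le Cj^{2\beta-\alpha-1}$ when $\alpha<2\beta$ and $|v_j-v_{j-1}|\le Cj^{\alpha-2}$ when $\alpha=2\beta$. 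Summing gives $\sum_{j=1}^{N-1}|v_j-v_{j-1}|\le C\max(1,N^{2\beta-\alpha})$, whence $\sum_{j=1}^{N-1}|\xi_{j,n}^2-\xi_{j-1,n}^2|=n^{-2\beta}\sum|v_j-v_{j-1}|\le Cn^{-2\beta}\max(1,(nT)^{2\beta-\alpha})\le Cn^{-\alpha}$. Combining with the prefactor, $\sum_j|b_{j,n}-b_{j-1,n}|\le Cn^{-\alpha\ell}\to0$, and together with the boundary term this gives $\|V_n^\ell(t)\|_{L^2}\to0$.

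The main obstacle is precisely this total-variation estimate on $v_j$, and within it the critical regime $\alpha=2\beta$: there the dominant term $2\lambda j^{2\beta-\alpha}$ becomes constant, the crude bound $|v_j-v_{j-1}|\lesssim j^{2\beta-\alpha-1}=j^{-1}$ is only borderline non-summable, and the genuine decay rate $j^{\alpha-2}$ must be extracted from the second-order expansion of $\phi$ together with the tail decay $\nu>1$ in \eqref{ec1}. I expect this estimate is best isolated as one of the technical lemmas of Section \ref{Sec:App}; modulo it, the Abel-summation argument above is routine.
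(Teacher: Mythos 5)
Your proposal is correct and follows essentially the same route as the paper's proof: summation by parts with $X_0=0$, a boundary term of order $n^{-\alpha\ell}$, and control of the total variation of $j\mapsto \xi_{j,n}^{2\ell}$ via $|a^\ell-b^\ell|\le \ell\max(a,b)^{\ell-1}|a-b|$ together with a derivative bound on the extension of $\xi_{j,n}^2$ to real argument (the paper's $a_n(x)$, with $|a_n'(x)|\le Cn^{-2\beta}x^{2\beta-\alpha-1}$ obtained from (H.1) exactly as in your expansion of $v_j$). The one point where you diverge is your flagged ``main obstacle'' at $\alpha=2\beta$, which is in fact not an obstacle: the coefficient $2\beta-\alpha$ of the leading term $2\lambda x^{2\beta-\alpha}$ vanishes there, so the sharper bound $|a_n'(x)|\le Cn^{-2\beta}x^{2\beta-2}=Cn^{-2\beta}x^{\alpha-2}$ already follows from boundedness of $\psi'$ alone (no second-order expansion of $\phi$ or use of $\nu$ is needed), and even the crude $j^{-1}$ rate would only cost a factor $\log n$, leaving $n^{-\alpha\ell}\log n\to 0$.
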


\begin{proof}
Recalling that $X_0 = 0$ and $\Delta X_{j/n} = X_{(j+1)/n} - X_{j/n}$, we can rewrite the sum as
\[ V_n^\ell(t)=
 X_{\frac{\Nt}{n}}\xi_{\Nt -1,n}^{2\ell} - X_{\frac 1n}\left( \xi_{1,n}^{2\ell} - \xi_{0,n}^{2\ell}\right) - \sum_{j=2}^{\Nt -1} X_{\frac jn}\left( \xi_{j,n}^{2\ell} - \xi_{j-1,n}^{2\ell}\right).
\]
We have, for any integer $j\ge 1$,
\begin{align*}  \xi^2_{j,n} &=\left(\frac{j+1}{n}\right)^{2\beta}\phi(1)+\left(\frac jn\right)^{2\beta}\phi(1) - 2\left(\frac jn\right)^{2\beta}\phi(\frac{j+1}{j})\\
 &= \frac{\phi(1)}{n^{2\beta}}\left( (j+1)^{2\beta} - j^{2\beta}\right) - \frac{2j^{2\beta}}{n^{2\beta}}\left( \phi(1+\frac 1j) - \phi(1)\right).
 \end{align*}
By (H.1), we can write this as
\[
 \xi^2_{j,n} = \frac{2\beta\phi(1)}{n^{2\beta}}\int_0^1 (j+y)^{2\beta-1}dy - \frac{2j^{2\beta}}{n^{2\beta}}\left( -\lambda j^{-\alpha} + \psi(1+\frac 1j) - \psi(1)\right) :=a_n(j).
\]
By the previous formula, we can extend the function $a_n$ to all reals $x\ge 1$.
Using the fact that  $ \psi(x)$ has a bounded derivative in $(1,2]$,
we can find positive constants $C,C^{\prime}$ such that  for all $x\ge 1$,
\[
\left| a_n'(x)\right| \le Cn^{-2\beta}\left( x^{2\beta-2} + x^{2\beta-\alpha -1}\right) \le C^{\prime}n^{-2\beta}x^{2\beta-\alpha -1}.
\]
Hence, by \eqref{ineq:varpartial}, it follows that for integers $2\le j \le \Nt$,
\begin{eqnarray*}
 \left| a_n^\ell (j) - a_n^\ell (j-1)\right| 
&\le&  C \sup_{2\le j \le \Nt}\left|a_n(j)\right|^{\ell -1}\\
&&\times \int_0^1 \left| a_n'(j-1+y)\right|~dy \le Cn^{-(\ell-1)\alpha-2\beta}(j-1)^{2\beta-\alpha-1}.
\end{eqnarray*}
As a consequence, using again inequality (\ref{ineq:varpartial}),  we can write
\begin{eqnarray*}
&&\mathbb{E}\left[\left(X_{\frac{\Nt}{n}}\xi_{\Nt -1,n}^{2\ell} - X_{\frac 1n}\left( \xi_{1,n}^{2\ell} - \xi_{0,n}^{2\ell}\right) - \sum_{j=2}^{\Nt -1} X_{\frac jn}\left( \xi_{j,n}^{2\ell} - \xi_{j-1,n}^{2\ell}\right)\right)^2\right]^{\frac{1}{2}} \\
&& \qquad \le Cn^{-\ell\alpha} + C  \sum_{j=2}^{\Nt -1} \left| a_n^\ell (j) - a_n^\ell (j-1)\right|   \le Cn^{-\ell\alpha},
\end{eqnarray*} 
which tends to zero as $n$ tends to infinity.  
\end{proof}

Then,  Theorem \ref{Variations}  will be  a consequence of Theorem \ref{thm:NNCLT}, if we show that  the  remaining terms $h_n^r(t)$,  $0 \le r \le \ell-1$, $t\geq0$,  satisfy conditions  (\ref{conv:hninner}) and (\ref{conv:contractionh}).  This will be done in the next two lemmas.

\begin{lemma}
Let $1 \le p \le q_r-1$ be an integer. Then,
\[
\lim_{n\rightarrow \infty} \left\| h_n^r(t) \otimes_p h^r_n(t) \right\|^2_{\hten^{\otimes (2q_r - 2p)}} =0.
\]
\end{lemma}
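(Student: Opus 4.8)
The plan is to show that all contractions $h_n^r(t)\otimes_p h_n^r(t)$ vanish in norm by expanding the norm explicitly in terms of inner products of the tensor building blocks $\partial_{j/n}$. Writing out the definition,
\[
\Norm{h_n^r(t)\otimes_p h_n^r(t)}^2_{\hten^{\otimes(2q_r-2p)}}
  = \sum_{i,j,k,l=0}^{\Nt-1}\xi_{i,n}^{2r}\xi_{j,n}^{2r}\xi_{k,n}^{2r}\xi_{l,n}^{2r}
    \Ip{\partial_{\frac in},\partial_{\frac jn}}_{\hten}^{p}
    \Ip{\partial_{\frac kn},\partial_{\frac ln}}_{\hten}^{p}
    \Ip{\partial_{\frac in},\partial_{\frac kn}}_{\hten}^{q_r-p}
    \Ip{\partial_{\frac jn},\partial_{\frac ln}}_{\hten}^{q_r-p},
\]
so everything reduces to controlling the elementary covariances $\mu(i,j):=\Ip{\partial_{\frac in},\partial_{\frac jn}}_{\hten}=\E[\DXj[i]\,\DXj[j]]$. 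First I would record the diagonal estimate from the increment exponent: since $\alpha=\tfrac1{2\ell+1}$ and (H.1) holds, one has $\xi_{j,n}^2=\mu(j,j)\asymp n^{-\alpha}$ uniformly over the relevant range of $j$ (this is exactly the two-sided bound guaranteed by the increment exponent together with the self-similarity scaling, and is what the auxiliary inequality (\ref{ineq:varpartial}) encodes). Then $\xi_{j,n}^{2r}\asymp n^{-r\alpha}$, and each of the four weight factors contributes $n^{-r\alpha}$, giving a prefactor of order $n^{-4r\alpha}$.

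The crux is a good off-diagonal bound on $\mu(i,j)$ for $i\neq j$. Using $\phi''$ bound (H.2) and a second-order Taylor/telescoping argument on $R(s,t)=s^{2\beta}\phi(t/s)$, the covariance of two \emph{distinct} increments should satisfy an estimate of the form $|\mu(i,j)|\le C n^{-2\beta}\,\rho(|i-j|)$, where $\rho$ decays like $|i-j|^{\alpha-2}$ in the near-diagonal regime and like $|i-j|^{-\nu-1}$ in the far regime; both are summable powers. I would separate the quadruple sum according to which pairs of indices are close. The key structural point is that the exponents $p$ and $q_r-p$ are \emph{both} at least $1$ (this is precisely why the range $1\le p\le q_r-1$ is assumed, and why the non-contracted term $V_n^\ell$ with $q_\ell=1$ had to be peeled off separately). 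Thus the summand carries at least one factor of $|\mu(i,j)|$ and at least one of $|\mu(i,k)|$, each genuinely off-diagonal and hence small, which is what produces extra decay beyond the trivial diagonal term.

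The standard way to organize the bookkeeping is to bound each factor $|\mu(\cdot,\cdot)|^m$ crudely by $\xi_{\cdot,n}^{2m}\le Cn^{-m\alpha}$ except for one chosen factor in each of the two groups, reducing the quadruple sum to a product of two double sums of the shape $\sum_{i,j}|\mu(i,j)|^{\min(p,q_r-p,\dots)}$; by self-similarity each such sum is $\asymp n\sum_{p\in\Z}(\,|p+1|^\alpha+|p-1|^\alpha-2|p|^\alpha\,)^{m}$ which converges for $m\ge 2$ since the summand decays like $|p|^{\alpha-2}$. Tallying the powers of $n$, the total scales like $n^{-8r\alpha}\cdot n^{-(\text{const})}\cdot n^2$ against the normalization $n^{4}$-many index choices, and the construction is arranged so that the $\alpha=\tfrac1{2\ell+1}$ balance makes the prefactor stay bounded while the off-diagonal decay forces the whole expression to $0$. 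The main obstacle I anticipate is the nonstationarity of $X$: unlike the fBm case, $\mu(i,j)$ does not depend only on $|i-j|$, so the clean convolution/summation-by-parts estimates of \cite{BNN} are not directly available. I would handle this by using the uniform two-sided variance bounds from the increment exponent to pass to a comparison with the stationary fBm-type kernel $|p+1|^\alpha+|p-1|^\alpha-2|p|^\alpha$, at the cost of keeping track of the time interval $[0,t]$ and the constant $C$ that may depend on it, exactly as flagged in the notation section.
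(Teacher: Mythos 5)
Your setup coincides with the paper's: the same expansion of $\Norm{h_n^r(t)\otimes_p h_n^r(t)}^2_{\hten^{\otimes(2q_r-2p)}}$ into a quadruple sum, and the same key structural observation that $p\ge 1$ and $q_r-p\ge 1$. But two concrete problems keep the argument from closing. First, your off-diagonal bound $|\mu(i,j)|\le Cn^{-2\beta}\rho(|i-j|)$ with $\rho$ depending only on the gap is \emph{false} whenever $\alpha<2\beta$ (e.g.\ Swanson-type processes, which the theorem covers): by the paper's Lemma 5.3 and the Step 1 asymptotics in the proof of Lemma 3.4, $\E[\Delta X_{\frac jn}\Delta X_{\frac{j+x}{n}}]$ behaves like $\lambda n^{-2\beta}j^{2\beta-\alpha}\rho_\alpha(x)$, which for $j$ of order $n$ is of order $n^{-\alpha}\rho_\alpha(x)\gg n^{-2\beta}\rho_\alpha(x)$. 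The correct uniform statement on $[0,T]$ has prefactor $n^{-\alpha}$ with $C=C(T)$ (absorbing $T^{2\beta-\alpha}$), which is exactly what the paper packages as the row-sum estimate $\sup_{j}\sum_{k}\bigl|\Ip{\partial_{\frac jn},\partial_{\frac kn}}_{\hten}\bigr|\le Cn^{-\alpha}$ in Lemma 5.4. (Your far-field exponent $|i-j|^{-\nu-1}$ should also be $|i-j|^{-\nu}$, coming from $\phi'$ rather than $\phi''$; this is harmless since $\nu>1$. Likewise only the upper half of $\xi_{j,n}^2\asymp n^{-\alpha}$ holds uniformly down to $j=0$, but only the upper half is used.)

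Second, and more seriously, your bookkeeping lands exactly on the critical line and does not produce decay. Reducing the quadruple sum to ``a product of two double sums'' means keeping one inner-product factor per group and bounding the other two by $\sup|\mu|^{q_r-p}\le Cn^{-(q_r-p)\alpha}$; since $\sum_{i,j\le\Nt}|\mu(i,j)|^p\le Cn^{1-p\alpha}$, the total is
\begin{equation*}
n^{-4r\alpha}\cdot\bigl(n^{1-p\alpha}\bigr)^2\cdot n^{-2(q_r-p)\alpha}=n^{2-\alpha(4r+2q_r)}=n^{0},
\end{equation*}
because $4r+2q_r=4\ell+2=2/\alpha$. So any scheme with two free indices and only two row-sum gains gives $O(1)$, not $o(1)$; the balance you invoke at the end is precisely what prevents the conclusion, and your inconsistent exponent count ($n^{-8r\alpha}$ versus the earlier, correct $n^{-4r\alpha}$) suggests this tally was never carried out. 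The paper's proof avoids this by pulling out $\sup|\Ip{\cdot,\cdot}_{\hten}|^{2q_r-3}$ and keeping \emph{three} factors under the sum whose index pattern connects all four indices, namely $\Ip{\partial_{\frac{j_1}{n}},\partial_{\frac{j_2}{n}}}_{\hten}\Ip{\partial_{\frac{j_1}{n}},\partial_{\frac{k_1}{n}}}_{\hten}\Ip{\partial_{\frac{j_2}{n}},\partial_{\frac{k_2}{n}}}_{\hten}$; three successive applications of the row-sum lemma leave a single free index, giving $n^{1-\alpha(4r+2q_r)}=n^{-1}\to0$. Your convolution picture, once corrected to the $n^{-\alpha}$ prefactor, supports the same count (three $\ell^1$ norms of $\rho$ and one free index around the $4$-cycle $j_1\,j_2\,k_2\,k_1$), so the missing ingredient is organizational — you must retain a connected chain of three off-diagonal factors — but as written the proof stops at a bounded, non-vanishing estimate.
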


\begin{proof}  We have for each $n \ge 2$
\begin{multline*}
\left\| h_n^r(t) \otimes_p h^r_n(t) \right\|^2_{\hten^{\otimes (2q_r - 2p)}} \\
= \sum_{j_1, j_2, k_1, k_2= 0}^{\Nt -1} \xi_{j_1,n}^{2r}\xi_{j_2,n}^{2r}\xi_{k_1,n}^{2r}\xi_{k_2,n}^{2r}
\left< \partial_{\frac{j_1}{n}}, \partial_{\frac{j_2}{n}}\right>_\hten^p\left< \partial_{\frac{k_1}{n}}, \partial_{\frac{k_2}{n}}\right>_\hten^p\left< \partial_{\frac{j_1}{n}}, \partial_{\frac{k_1}{n}}\right>_\hten^{q_r-p}\left< \partial_{\frac{j_2}{n}}, \partial_{\frac{k_2}{n}}\right>_\hten^{q_r-p}.
\end{multline*}
Note that for applicable values of $q_r$ and $p$ we always have $p\ge 1$ and $q_r-p \ge 1$.  
By (\ref{ineq:varpartial})  and Cauchy-Schwarz inequality, we have
\[
\sup_{0 \le j,k\le \Nt -1} \left| \left< \partial_{\frac{j}{n}}, \partial_{\frac{k}{n}}\right>_\hten\right| \le Cn^{-\alpha}.
\]
As a consequence, 
\begin{eqnarray*}
\left\| h_n^r(t) \otimes_p h^r_n(t) \right\|^2_{\hten^{\otimes (2q_r - 2p)}} & \le & \left( \sup_{0\le j \le \Nt -1} \left| \xi_{j,n}^{2r}\right|\right)^4
\sup_{0 \le j,k\le \Nt -1} \left| \left< \partial_{\frac{j}{n}}, \partial_{\frac{k}{n}}\right>_\hten\right|^{2q_r -3}\\
&&\times \sum_{j_1, j_2, k_1, k_2= 0}^{\Nt -1}\left| \left< \partial_{\frac{j_1}{n}}, \partial_{\frac{j_2}{n}}\right>_\hten\left< \partial_{\frac{j_1}{n}}, \partial_{\frac{k_1}{n}}\right>_\hten\left< \partial_{\frac{j_2}{n}}, \partial_{\frac{k_2}{n}}\right>_\hten\right| \\
 & \le&  C n^{-\alpha(4r+2q_r-3)+1} \left( \sup_{0\le j \le \Nt -1} \sum_{k=0}^{\Nt -1} \left|\left< \partial_{\frac{j}{n}}, \partial_{\frac{k}{n}}\right>_\hten\right|\right)^3.
\end{eqnarray*}
We now apply Lemma \ref{lem:sum_est1} and noting that $4r + 2q_r = 4\ell +2 = \frac{2}{\alpha}$, we have up to a constant $C$,
\[
\left\| h_n^r(t) \otimes_p h^r_n(t) \right\|^2_{\hten^{\otimes (2q_r - 2p)}}
\le  Cn^{-1},
\]
which tends to zero as $n$ tends to infinity.  This completes the proof of the lemma.
\end{proof}

In the next lemma we show  that the functions  $h_n^r$, $0 \le r \le \ell-1$, satisfy condition  (\ref{conv:hninner})  of Theorem \ref{thm:NNCLT}, with  some constants $c_r$ to be defined below.

\begin{lemma}\label{sigma_comps}  Under above notation, let $s,t\ge 0$.  Then for each integer $0 \le r \le \ell -1$,
\begin{equation} \label{equa1}
\lim_{n\to\infty} \left< h_n^r(t) , h_n^r(s) \right>_{\hten^{\otimes q_r}}
= \frac{\alpha}{2\beta }(s\wedge t)^{\frac{2\beta}{\alpha}} 2^{2r} \lambda^{2\ell+1}\sum_{m\in\mathbb{Z}} (\rho_\alpha(m))^{q_r},
\end{equation}
where $\rho_\alpha(m)=|m+1|^\alpha+|m-1|^\alpha-2|m|^\alpha$.
\end{lemma}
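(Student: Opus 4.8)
The plan is to expand the inner product over the discretization points, extract the dominant contribution coming from the term $\psi(x)=\phi(x)+\lambda(x-1)^\alpha$ of Hypothesis (H.1), and then recognize the resulting double sum as a Riemann-sum approximation to $(s\wedge t)^{2\beta/\alpha}$ times a convergent series in the stationary increments of the limiting Gaussian field. First I would write out
\[
\left< h_n^r(t) , h_n^r(s) \right>_{\hten^{\otimes q_r}}
= \sum_{j=0}^{\Nt-1}\sum_{k=0}^{\lfloor ns\rfloor-1} \xi_{j,n}^{2r}\xi_{k,n}^{2r}\left< \partial_{\frac jn},\partial_{\frac kn}\right>_\hten^{q_r}.
\]
The first task is to produce sharp asymptotics for the two ingredients of each summand. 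For the diagonal factor, from the formula $\xi_{j,n}^2=a_n(j)$ derived in the preceding lemma and the identity $4r+2q_r=2/\alpha=4\ell+2$, one finds $\xi_{j,n}^{2r}\sim (2\beta\lambda/ n^{2\beta})^{r}j^{r(2\beta-\alpha)}$ after isolating the $(x-1)^\alpha$ piece of $\phi$; I would make this precise with an error estimate using the bounded derivative of $\psi$. For the off-diagonal factor I would compute $\left<\partial_{\frac jn},\partial_{\frac kn}\right>_\hten$ exactly in terms of $\phi$ via \eqref{eq:R}, and show that as $n\to\infty$, after multiplying by the appropriate power of $n$, it is governed by the second-order increment $\rho_\alpha(j-k)=|j-k+1|^\alpha+|j-k-1|^\alpha-2|j-k|^\alpha$ coming once again from the $\lambda(x-1)^\alpha$ term; the pieces of $\phi$ controlled by (H.2) contribute only lower-order remainders because of the decay bound \eqref{ec1}.

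With these asymptotics in hand, the core step is to insert them into the double sum and change variables to $m=j-k$ (the off-diagonal lag) and a remaining spatial index. The factor $\xi_{j,n}^{2r}\xi_{k,n}^{2r}$ contributes $j^{2r(2\beta-\alpha)}$-type weights that, summed over the spatial index with the normalizing powers of $n$, converge to $\int_0^{s\wedge t}u^{\cdots}\,du$ and produce the prefactor $\frac{\alpha}{2\beta}(s\wedge t)^{2\beta/\alpha}$ together with the constant $2^{2r}\lambda^{2\ell+1}$; here I would carefully track the total power of $n$, using $4r+2q_r=4\ell+2=2/\alpha$ to see that everything balances so that a genuine limit (rather than $0$ or $\infty$) appears. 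Meanwhile, summing $\rho_\alpha(m)^{q_r}$ over the lag $m\in\Z$ gives the factor $\sum_{m\in\Z}(\rho_\alpha(m))^{q_r}$, which is finite because $\rho_\alpha(m)=O(|m|^{\alpha-2})$ for large $|m|$ and $q_r\ge 3$ guarantees $q_r(2-\alpha)>1$.

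The main obstacle I anticipate is controlling the interchange of the two limiting mechanisms rigorously: one must simultaneously take the off-diagonal lag sum over $m$ to its infinite-series limit and the spatial sum to its Riemann-integral limit, while the factors $\xi_{j,n}^{2r}\xi_{k,n}^{2r}$ vary slowly across a lag window. Concretely, the difficulty is justifying that $\xi_{j,n}^{2r}\xi_{k,n}^{2r}$ may be replaced by a common value $\xi_{j,n}^{4r}$ (up to negligible error) throughout each band $|j-k|=m$, uniformly enough that the $m$-sum and the spatial sum decouple. I would handle this by a dominated-convergence argument on $\Z$, using the decay estimate $\rho_\alpha(m)=O(|m|^{\alpha-2})$ together with the bound $|a_n'(x)|\le C'n^{-2\beta}x^{2\beta-\alpha-1}$ from the previous lemma to show that the slow variation of the diagonal weights across any fixed lag introduces only an $o(1)$ relative error, and then passing to the limit term by term. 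The contributions of the non-leading parts of $\phi$ governed by (H.2) must be shown to vanish in the limit, which is where the decay exponent $\nu$ and the bound \eqref{ec2} enter.
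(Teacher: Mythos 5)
Your proposal follows essentially the same route as the paper's proof: expand the inner product as a double sum, extract the leading $\lambda(x-1)^\alpha$ behavior via (H.1) and the Mean Value Theorem for $\psi$, pass to the lag variable $m=j-k$, dominate the lag sums uniformly in $n$ by $C|m|^{-1-\delta}$ using the (H.2)-based covariance estimates, and identify the spatial Riemann sum $n^{-2\beta/\alpha}\sum_j j^{2\beta/\alpha-1}\to\frac{\alpha}{2\beta}(s\wedge t)^{2\beta/\alpha}$; the paper merely organizes this slightly differently (it disposes of $s\neq t$ through a separate vanishing estimate over disjoint rectangles, and handles the slowly varying weights by keeping the product $\xi_{j,1}^{2\ell+1}\xi_{j+x,1}^{2\ell+1}$ intact rather than replacing it by a common value across each lag band). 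One slip to fix: by Lemma \ref{lem3.2} the leading asymptotic is $\xi_{j,n}^2\sim 2\lambda\, n^{-2\beta}j^{2\beta-\alpha}$, so your constant should be $(2\lambda)^r$, not $(2\beta\lambda)^r$ — otherwise the final constant would not come out as $2^{2r}\lambda^{2\ell+1}$.
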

\begin{proof}
We can easily check that 
\[
\left< h_n^r(t), h_n^r(s)\right>_{\hten^{\otimes q_r}} 
	=\sum_{j=0}^{\Nt -1}\sum_{k=0}^{\floor{ns} -1}G_{n}(j,k),
\]
where the function $G_{n}(j,k)$ is defined by
\begin{align}\label{eq:Psidef}
G_{n}(j,k)
  =\xi_{j,n}^{2r}\xi_{k,n}^{2r}\left< \partial_{\frac jn},\partial_{\frac {k}{n}}\right>_\hten^{q_r}.
\end{align}
Then the convergence  (\ref{equa1}) will be a consequence of the following two facts:

\noindent
(i) For every $0<s_{1}<t_{1}<s_{2}<t_{2}$,  
\begin{equation}\label{conv:inctozero}
\lim_{n\rightarrow\infty}\sum_{j=\floor{ns_{1}}}^{\floor{nt_{1}} -1}\sum_{k=\floor{ns_{2}}}^{\floor{nt_{2}} -1}\Abs{G_{n}(j,k)}
=0.
\end{equation}

\noindent 
(ii) For every $t>0$,
\begin{equation}\label{eq:sigma_comps}
\lim_{n\rightarrow\infty}\sum_{j,k=0}^{\floor{nt} -1}G_{n}(j,k)\\
= \frac{\alpha}{2\beta}t^{\frac{2\beta}{\alpha}}2^{2r} \lambda^{2\ell+1}\sum_{m\in\mathbb{Z}}(\rho_\alpha(m))^{q_r}.
\end{equation}

First we prove \eqref{conv:inctozero}. 
We can assume that  $n \ge 6$, $\floor{ns_{1}} \ge 1$  and  $\floor {nt_1}+2< \floor{ns_2}$, which is true if $n$ is large enough. This implies that $j+3 \le k$ for each $k$ and $j$ such that
$\floor{ns_{1}} \le j \le\floor{nt_{1}} -1$ and  $\floor{ns_{2}} \le k \le\floor{nt_{2}} -1$. As a consequence, applying  inequalities (\ref{ecua1}) and (\ref{ineq:covpartial1}),  we obtain the estimate
\begin{align*}
\sum_{j=\floor{ns_{1}}}^{\floor{nt_{1}} -1} 
\sum_{ k=\floor{ns_{2}} }^{(2j+2)\wedge(\floor{nt_{2}}-1)}|G_n(j,k)|  &\le C\sum_{j=\floor{ns_{1}}}^{\floor{nt_{1}} -1} \sum_{ k=\floor{ns_{2}}}^{\floor{nt_{2}}-1}
  n^{-4\beta r} k^{(2\beta-\alpha)r} j^{(2\beta-\alpha)r} n^{-2\beta q_r} j^{(2\beta-\alpha)q_r} k^{(\alpha-2)q_r}\\
	&\leq C  n^{2-2(\alpha r +q_r)},
\end{align*}
which converges to  zero as $n$ tends to  infinity due to the fact that $\alpha>0$ and $q_{r}\geq 1.$ On the other hand, applying  inequalities (\ref{ecua1}) and (\ref{ineq:covpartial2})  we obtain the estimate
\begin{align*}
\sum_{j=\floor{ns_{1}}}^{\floor{nt_{1}} -1} 
\sum_{ k=(2j+2)\vee\floor{ns_{2}}}^{\floor{nt_{2}} -1}|G_n(j,k)|  &\le C\sum_{j=\floor{ns_{1}}}^{\floor{nt_{1}} -1} \sum_{ k=\floor{ns_{2}}} ^{\floor{nt_{2}} -1}
  n^{-4\beta r} j^{(2\beta-\alpha)r} k^{(2\beta-\alpha)r} n^{-2\beta q_r} j^{(2\beta+\nu -2)q_r} k^{-\nu q_r} \\
  &\leq Cn^{2-2(\alpha r+q_{r})}.
\end{align*}
The exponent of $n$ is the above estimate is always negative, so this term converges to zero as $n$ tends to infinity.

Next we prove \eqref{eq:sigma_comps}. We can write
\begin{equation}\label{Variance_r}
\sum_{j,k=0}^{\floor{nt} -1}G_{n}(j,k)
  = \sum_{x=0}^{\floor{nt}-1}\sum_{j=0}^{\floor{nt} -1-x}(2-\delta_{x,0})G_{n}(j,j+x),
\end{equation}
where $\delta_{x,0}$ denotes the Kronecker delta.  
First we will show that there exist constants $C,\delta>0$, such that for $3\leq x\leq\floor{nt}-1$, 
\begin{equation} \label{ecua17}
\sum_{j=0}^{\floor{nt} -1-x}(2-\delta_{x,0})|G_{n}(j,j+x)|
  \leq  C x^{-1-\delta}.
\end{equation}
To show (\ref{ecua17}) we consider three cases:

\noindent
Case 1:  For $j=0$, we have, using (\ref{ecua1}) and   (\ref{ec2}),
\begin{eqnarray}\label{eq:ecua17p0}
|G(0,x)|  &\le& C n^{-4\beta r}  x^{(2\beta-\alpha)r} | n^{-2\beta} (\phi(x+1) -\phi(x))|^{q_r} \le C n^{-2\beta(2\ell+1)} x^{ (2\beta-\alpha)r-\nu q_r} \nonumber\\
&\le &  C x^{-2\beta(2\ell+1) +(2\beta-\alpha)r-\nu q_r}
\end{eqnarray}
which provides the desired estimate, because  the largest value of the exponent $-2\beta(2\ell+1) +(2\beta-\alpha)r-\nu q_r$ is obtained for $r=\ell-1$, and in this case this exponent becomes
\[
-2\beta(\ell+2) -(\ell-1) \alpha -3\nu \le -\alpha(2\ell+1) -3\nu= -1-3\nu.
\]

\noindent
Case 2: Applying  (\ref{ineq:covpartial1}), yields
\begin{align*}
\sum_{j=x-2}^{\floor{nt} -1-x} |G_{n}(j,j+x)|
 &\le C\sum_{j=x-2}^{\floor{nt} -1-x}n^{-2\beta(2\ell+1)}j^{(2\beta-\alpha)(r+q_{r})}(j+x)^{(2\beta-\alpha)r+(\alpha-2)q_{r}}\\
&\le C\sum_{j=x-2}^{\floor{nt} -1-x}n^{-2\beta(2\ell+1)}(j+x)^{(2\beta-\alpha)(2\ell+1)+(\alpha-2)q_{r}}\\
&\le C^{\prime}\sum_{j=x-2}^{\floor{nt} -1-x}(j+x)^{-\alpha(2\ell+1)+(\alpha-2)q_{r}}.
\end{align*}
Hence, using the bound 
$
(j+x)^{(\alpha-2)q_{r}}
  \leq j^{(\alpha-2)(q_{r}-1)}x^{\alpha-2},
$
and the condition $\alpha=\frac{1}{2\ell+1}$, we get 
\begin{align}\label{eq:ecua17p1}
\sum_{j=x-2}^{\floor{nt} -1-x} |G_{n}(j,j+x)|
 &\le Cx^{\alpha-2}\sum_{j=1}^{\infty}j^{-1+(\alpha-2)(q_{r}-1)}.
\end{align}
The sum in the right hand side is finite due to the fact that $q_{r}\geq 3$ and $\alpha<1$.\\

\noindent
Case 3: By  (\ref{ineq:covpartial2}), 
\begin{align}\label{eq:Gnc3}
\sum_{j=0}^{x-2}\Abs{G_{n}(j,j+x)}
  &\leq C \sum_{j=0}^{x-2}n^{-2\beta(2\ell+1)}j^{2\beta(r+q_{r})-\alpha r+(\nu-2)q_{r}}(j+x)^{(2\beta-\alpha)r-\nu q_{r}}.
\end{align}
Notice that
\begin{align*}
n^{-2\beta(2\ell+1)}j^{2\beta(r+q_{r})}(j+x)^{2\beta r}
  &\leq n^{-2\beta(2\ell+1)}(j+x)^{2\beta(2\ell+1)}\leq C
\end{align*}
and 
\begin{align*}
(j+x)^{-\nu q_{r}}
  &\leq j^{-\nu(q_{r}-1)}x^{-\nu}.
\end{align*}
Hence, by \eqref{eq:Gnc3}, 
\begin{align}\label{eq:ecua17p2}
\sum_{j=0}^{x-2}\Abs{G_{n}(j,j+x)}
  &\leq \sum_{j=0}^{x-2}j^{-\alpha r+(\nu-2)q_{r}}(j+x)^{-\alpha r-\nu q_{r}}\leq x^{-\nu}\sum_{j=0}^{x-2}j^{-\alpha r-2q_{r}+\nu}(j+x)^{-\alpha r}\nonumber\\
  &\leq C x^{-\nu}\sum_{j=0}^{x-2}j^{-2\alpha r-2 q_{r}+\nu}.
\end{align}
The sum in the right hand side is finite due to the conditions $q_{r}\geq 3$ and $\nu\leq 2$.\\

\noindent Relation \eqref{ecua17} follows from \eqref{eq:ecua17p0}, \eqref{eq:ecua17p1} and \eqref{eq:ecua17p2}. As a consequence, provided that we prove the pointwise convergence 
\begin{equation}\label{conv:Gpointwise}
\lim_{n\rightarrow\infty}\sum_{j=0}^{\floor{nt} -1-x}G_{n}(j,j+x)
=\frac{\alpha}{2\beta}2^{2\ell+1-q_{r}}\lambda^{2\ell+1}t^{\frac{2\beta}{\alpha}}( \rho_\alpha(x))^{q_{r}},
\end{equation}
for any $x\ge 0$, by applying the dominated convergence theorem in \eqref{Variance_r}, we obtain \eqref{eq:sigma_comps}. 
The proof of \eqref{conv:Gpointwise} will be done in three steps.

\medskip
\noindent
{\it Step 1.} 
 Since $\phi(y)=-\lambda (y-1)^{\alpha}+\psi(y)$, for every $x\geq1$ we can write
\begin{multline*}
\E\left[(X_{j+1}-X_{j})(X_{j+x+1}-X_{j+x})\right]\\
\begin{aligned}
  &=(j+1)^{2\beta}(\phi(1+\frac{x}{j+1})-\phi(1+\frac{x-1}{j+1}))+j^{2\beta}(\phi(1+\frac{x}{j})-\phi(1+\frac{x+1}{j}))\\
	&=-\lambda(j+1)^{2\beta-\alpha}(x^{\alpha}-(x-1)^{\alpha})-\lambda j^{2\beta-\alpha}(x^{\alpha}-(x+1)^{\alpha})\\
	&+(j+1)^{2\beta}(\psi(1+\frac{x}{j+1})-\psi(1+\frac{x-1}{j+1}))+j^{2\beta}(\psi(1+\frac{x}{j})-\psi(1+\frac{x+1}{j})).
\end{aligned}
\end{multline*}
Hence, using  the Mean Value Theorem for  $\psi$, as well as (H.1), we deduce that for every $x\geq1$, there exist constants $\gamma_{1}$ and $ \gamma_{2}>0$, such that
\begin{multline*}
\E\left[(X_{j+1}-X_{j})(X_{j+x+1}-X_{j+x})\right]\\
\begin{aligned}
  &=-\lambda(j+1)^{2\beta-\alpha}(x^{\alpha}-(x-1)^{\alpha})-\lambda j^{2\beta-\alpha}(x^{\alpha}-(x+1)^{\alpha})\\
	&+(j+1)^{2\beta-1} \psi'(1+\gamma_1) - j^{2\beta-1} \psi'(1+\gamma_2).
\end{aligned}
\end{multline*}
As a consequence, taking into account that $\psi'$ is bounded and $\alpha <1$,  
\begin{align}\label{conv:covincrements}
\lim_{j\rightarrow\infty}(j+1)^{\alpha-2\beta}\E\left[(X_{j+1}-X_{j})(X_{j+x+1}-X_{j+x})\right]
  &=-\lambda(2x^{\alpha}-(x-1)^{\alpha}-(x+1)^{\alpha}).
\end{align}
In addition, from Lemma \ref{lem3.2}, it follows that
\begin{align}\label{conv:xi}
\lim_{j\rightarrow\infty}(j+1)^{\alpha-2\beta}\E\left[(X_{j+1}-X_{j})^2\right]
=\lim_{j\rightarrow\infty}(j+1)^{\alpha-2\beta}\E\left[(X_{j+x+1}-X_{j+x})^2\right]
=2\lambda.
\end{align}
Using \eqref{conv:covincrements} and \eqref{conv:xi}, we get 
\begin{align}\label{eq:covnormpointw}
\lim_{j\rightarrow\infty}\xi_{j,1}^{-1}\xi_{j+x,1}^{-1}\E\left[(X_{j+1}-X_{j})(X_{j+x+1}-X_{j+x})\right]
	&= \frac{1}{2}(\Abs{x-1}^{\alpha}+\Abs{x+1}^{\alpha}-2\Abs{x}^{\alpha}).
\end{align}
Notice that the previous relation is also true for $x=0$. Therefore,  we deduce that for every $\varepsilon>0$, there exists $M>0$, such that for every $j\geq M$,
\begin{equation} \label{eps}
\bigg|\xi_{j,1}^{-q_{r}}\xi_{j+x,1}^{-q_{r}}\E\left[(X_{j+1}-X_{j})(X_{j+x+1}-X_{j+x}) \right]^{q_r}
-2^{-q_{r}}( \rho_\alpha(x))^{q_{r}}\bigg|<\varepsilon.
\end{equation}

\medskip
\noindent
{\it Step 2.}  Provided that we prove that 
\begin{align}\label{conv:xiaux}
\lim_{n\rightarrow\infty}n^{-\frac{2\beta}{\alpha}}\sum_{j=0}^{\floor{nt}-1-x}\xi_{j,1}^{2\ell+1}\xi_{j+x,1}^{2\ell+1}=\frac{\alpha}{2\beta}(2\lambda)^{2\ell+1}t^{\frac{2\beta}{\alpha}},
\end{align}
taking into account the self-similarity of the process $X$, and the fact that $\alpha =\frac 1{ 2\ell+1}$, the proof of  \eqref{conv:Gpointwise} will follow from
\begin{equation} \label{ecua67}
\lim_{n\rightarrow \infty} 
\sum_{j=0}^{\floor{nt}-1-x}\Abs{\xi_{j,n}^{2\ell+1-q_{r}}\xi_{j+x,n}^{2\ell+1-q_{r}}\left< \partial_{\frac jn},\partial_{\frac {j+x}{n}}\right>_\hten^{q_r}-
2^{-q_{r}} \xi_{j,n}^{2\ell+1}\xi_{j+x,n}^{2\ell+1}( \rho_\alpha(x))^{q_{r}}} =0.
\end{equation}
Using \eqref{ineq:varpartial} we can easily prove that 
\begin{equation}\label{eq:nearvariance2}
\limsup_{n\rightarrow\infty}\sum_{j=0}^{M}\Abs{\xi_{j,n}^{2\ell+1-q_{r}}\xi_{j+x,n}^{2\ell+1-q_{r}}\left< \partial_{\frac jn},\partial_{\frac {j+x}{n}}\right>_\hten^{q_r}-2^{-q_{r}}\xi_{j,n}^{2\ell+1}\xi_{j+x,n}^{2\ell+1}( \rho_\alpha(x))^{q_{r}}}=0.
\end{equation} 
Applying the estimate (\ref{eps}) and the limit (\ref{conv:xiaux}), we obtain
\begin{eqnarray} \notag
 &&\limsup_{n\rightarrow \infty} \sum_{j=M}^{\floor{nt}-1-x}\Abs{\xi_{j,n}^{2\ell+1-q_{r}}\xi_{j+x,n}^{2\ell+1-q_{r}}\left< \partial_{\frac jn},\partial_{\frac {j+x}{n}}\right>_\hten^{q_r}-
2^{-q_{r}} \xi_{j,n}^{2\ell+1}\xi_{j+x,n}^{2\ell+1}( \rho_\alpha(x))^{q_{r}}} \\ \notag
 && \quad =\limsup_{n\rightarrow \infty}  n^{-\frac {2\beta}{\alpha}} \sum_{j=M}^{\floor{nt}-1-x} \xi_{j,1}^{2\ell+1}\xi_{j+x,1}^{2\ell+1} \\ \notag
&&\qquad  \times   \left|\xi_{j,1}^{-q_{r}}\xi_{j+x,1}^{-q_{r}}\E\left[(X_{j+1}-X_{j})(X_{j+x+1}-X_{j+x}) \right]^{q_r}
-2^{-q_{r}}( \rho_\alpha(x))^{q_{r}}\right| \\ 
&& \qquad \le \varepsilon \frac{\alpha}{2\beta}(2\lambda)^{2\ell+1}t^{\frac{2\beta}{\alpha}}.  \label{ecua68}
\end{eqnarray}
Therefore, (\ref{eq:nearvariance2}) and (\ref{ecua68})  imply (\ref{ecua67}).

\medskip
 \noindent
 {\it Step 3.}
In order to prove \eqref{conv:xiaux} we proceed as follows. Using Lemma \ref{lem3.2}, as well as the condition $\alpha<1$, we deduce that for every $\varepsilon>0$, there exists $M\in\N$, such that for every $j\geq M$, 
$$\Abs{(j^{-(2\beta-\alpha)}\xi_{j,1}\xi_{j+x,1})^{2\ell+1}-(2\lambda)^{2\ell+1}}<\varepsilon,$$
and hence, since $\alpha=(2\ell+1)^{-1}$,
\begin{eqnarray*}
&&n^{-\frac{2 \beta}{\alpha}}\sum_{j=M}^{\floor{nt}-1-x}\Abs{\xi_{j,1}^{2\ell+1}\xi_{j+x,1}^{2\ell+1}-(2\lambda)^{2\ell+1}j^{(2\beta-\alpha)(2\ell+1)}}\\
&&\qquad \qquad =n^{-\frac{2\beta}{\alpha}}\sum_{j=M}^{\floor{nt}-1-x}j^{(2\beta-\alpha)(2\ell+1)} \left| \xi_{j,1}^{2\ell+1}\xi_{j+x,1}^{2\ell+1} j^{-(2\beta-\alpha)(2\ell+1)}
-(2\lambda)^{2\ell+1} \right|  \\
 && \qquad \qquad  \leq \varepsilon n^{-\frac{2\beta}{\alpha}}\sum_{j=M}^{\floor{nt}-1-x}j^{\frac{2\beta-\alpha}{\alpha}}.
\end{eqnarray*}
Therefore, since 
\begin{align}\label{conv:j}
\lim_{n\rightarrow\infty}n^{-\frac{2\beta}{\alpha}}\sum_{j=0}^{\floor{nt}-1-x}j^{\frac{2\beta-\alpha}{\alpha}}
  &= \frac{\alpha}{2\beta}t^{\frac{2\beta}{\alpha}},
\end{align}
we conclude that there exists a constant $C>0$ depending on $t$ and $x$, such that
\begin{align*}
\limsup_{n\rightarrow\infty}n^{-\frac{2\beta}{\alpha}}\sum_{j=M}^{\floor{nt}-1-x}\Abs{\xi_{j,1}^{2\ell+1}\xi_{j+x,1}^{2\ell+1}-(2\lambda)^{2\ell+1}j^{((2\beta-\alpha)(2l+1)}}
  &< C\varepsilon,
\end{align*}
and hence, by relation \eqref{conv:j} and condition $\alpha=(2\ell+1)^{-1}$, we conclude that
\begin{multline*}
\lim_{n\rightarrow\infty}n^{-2\beta(2\ell+1)}\sum_{j=0}^{\floor{nt}-1-x}\xi_{j,1}^{2\ell+1}\xi_{j+x,1}^{2\ell+1}\\
  =(2\lambda)^{2\ell+1}\lim_{n\rightarrow\infty}n^{-2\beta(2\ell+1)}\sum_{j=0}^{\floor{nt}-1-x}j^{(2\beta-\alpha)(2\ell+1)}
  =\frac{\alpha}{2\beta} (2\lambda)^{2\ell+1}t^{\frac{2\beta}{\alpha}},
\end{multline*} 
as required. The proof of Lemma \ref{sigma_comps} is now complete. 
\end{proof}
\section{Asymptotic behavior of $S_{n}^{\nu}(f^{\prime},t)$}\label{proof_main}
In this section we prove the main results, Theorems \ref{thm1} and \ref{Cor1}. We follow arguments similar to those used in  the proof of Theorem 1.1 of Binotto, Nourdin and Nualart \cite{BNN}, which was originally used in \cite{GNRV}.  For $f\in\mathcal{C}^{8\ell+2}(\R)$ and $a<b$, we consider the approximation \eqref{eq:Taylor_expansion} below, which was proved in  \cite[Theorem~3.6]{GNRV}  using Taylor's formula and the properties of $\nu$
\begin{eqnarray}  \notag
f(b)& =&  f(a) + (b-a)\int_0^1 f'(a+y(b-a))~\nu(dy) + \sum_{h=\ell}^{2\ell} \kappa_{\nu,h} f^{(2h+1)}\left(\frac{a+b}{2}\right) (b-a)^{2h+1}\\ 
&& + C(a,b)(b-a)^{4\ell+2}, \label{eq:Taylor_expansion} 
\end{eqnarray}
where $C(a,b)$ is a continuous function with $C(a,a)=0$, and the $\kappa_{\nu,h}$ are the constants given in  \cite[Theorem~3.6]{GNRV}.  In particular,
\begin{align}\label{def:kappa}
 \kappa_{\nu,\ell} =\frac{1}{(2\ell)!}\left( \frac{1}{(2\ell+1)2^{2\ell}} - \int_0^1 \left(y-\frac 12\right)^{2\ell} \nu(dy)\right). 
\end{align}
Recall the notation $\widetilde{X}_{\frac{t}{n}}$ and $\Delta X_{\frac{t}{n}}$  introduced in Section \ref{Notationextra}. From \eqref{eq:Taylor_expansion}, it follows that for $n\ge 2$,
\begin{align}\label{eq:Taylorexp}
f(X_{t})-f(0)
  &=S_{n}^{\nu}(f^{\prime},t)+\sum_{h=\ell}^{2\ell}\sum_{j=0}^{\floor{nt}-1}\kappa_{\nu,h}f^{(2h+1)}(\widetilde{X}_{\frac{j}{n}})(\DXj)^{2h+1}+R_{n}(t),
\end{align} 
where 
\begin{align*}
R_{n}(t)
  &=\sum_{j=0}^{\floor{nt}-1}C(X_{\frac{j}{n}},X_{\frac{j+1}{n}})(\DXj)^{4\ell+2}.
\end{align*}
Then, we can write 	
\begin{align}\label{eq:approxItoformula}
f(X_{t})-f(0)
  &=S_{n}^{\nu}(f^{\prime},t)+\sum_{h=\ell}^{2\ell}\Phi_{n}^{h}(t)+R_{n}(t),
\end{align} 
where
\begin{align}
\Phi_{n}^{h}(t)
  &=\kappa_{\nu,h}\sum_{j=0}^{\floor{nt}-1}f^{(2h+1)}(\widetilde{X}_{\frac{j}{n}})(\DXj)^{2h+1}.\label{def:Phi}
\end{align}
The term $R_{n}$ converges to zero in probability, uniformly in compact sets. Indeed, for every $T,K,\varepsilon>0$, we can write
\begin{align}\label{ineq:Cres1}
\mathbb{P}\left[\sup_{0\leq t\leq T}\Abs{R_{n}(t)}>\varepsilon\right]
  &\leq\mathbb{P}\left[\sup_{\substack{s,t\in[0,T]\\\Abs{t-s}\leq\frac{1}{n}}}\Abs{C(X_{s},X_{t})}>\frac{1}{K}\right]+\mathbb{P}\left[\sum_{j=0}^{\floor{nT}-1}(\DXj)^{4\ell+2}>K\varepsilon\right].
\end{align}
Since $\Delta X_{\frac{j}{n}}$ is a centered Gaussian variable, by \eqref{ineq:varpartial},   for all even integer $r$  
$$\sup_{1\leq j\leq \floor{nT}-1}\mathbb{E}\left[\Abs{\DXj}^r\right]\leq (r-1)!!\sup_{1\leq j\leq \floor{nT}-1}\mathbb{E}\left[\Abs{\DXj}^2\right]^{\frac{r}{2}}\leq C(r-1)!!n^{-\frac{r\alpha}{2}},$$ 
where $(r-1)!!$ denotes the double factorial $(r-1)!!=\prod_{k=0}^{r-1}(r-1-2k)$. As a consequence, using the Chebychev inequality and the condition $\alpha=\frac{1}{2\ell+1}$, we get
\begin{align}\label{ineq:Cres2}
\mathbb{P}\left[\sum_{j=0}^{\floor{nT}-1}(\DXj)^{4\ell+2}>K\varepsilon\right]
  &\leq \frac{C}{K\varepsilon}\frac{\floor{nT}}{n}\leq\frac{C}{K\varepsilon}.
\end{align}
The  convergence to zero in  probability, uniformly in compact sets,  of $R_{n}(t)$ is obtained from \eqref{ineq:Cres1} and \eqref{ineq:Cres2}, by letting first $n\rightarrow\infty$, and then $K\rightarrow\infty$.

The previous analysis shows that the term $R_{n}$ appearing in right hand side \eqref{eq:approxItoformula}, does not contribute to the limit as $n$ goes to infinity, so the asymptotic behavior of $S_{n}^{\nu}(f',t)$ is completely determined by $\sum_{h=\ell}^{2\ell}\Phi_{n}^{h}(t)$. The study of the  stochastic process $\sum_{h=\ell}^{2\ell}\Phi_{n}^{h}$ can be decomposed in the following steps: first, we reduce the problem of proving  Theorems \ref{thm1} and \ref{Cor1}, to the case where $f$ is compactly supported, by means of a localization argument. Then we prove that the processes $\Phi_{n}^{h}(t)$, with $h=\ell,\dots, 2\ell$ are tight in the Skorohod topology, and only contribute to the limit as $n$ goes to infinity, when $h=\ell$. 

Finally, we determine the behavior of $\Phi_{n}^{\ell}$ by splitting into the cases $\alpha=\frac{1}{2\ell+1}$ and $\alpha>\frac{1}{2\ell+1}$. In the case $\alpha>\frac{1}{2\ell+1}$, we show that $\Phi_{n}^{\ell}\rightarrow0$ in probability, which proves Theorem \ref{Cor1}. For the case $\alpha=\frac{1}{2\ell+1}$, we use the small blocks-big blocks methodology (see \cite{BNN} and \cite{CoNuWo}) and Theorem \ref{Variations}, to prove that $\Phi_{n}^{\ell}$ converges stably to $\{\kappa_{\nu,\ell}\sigma_{\ell}\int_{0}^{t}f^{(2\ell+1)}(X_{s})d Y_{s}\}_{t\geq0}$, which proves Theorem \ref{thm1}.

 We start reducing the problem of proving Theorems \ref{thm1} and \ref{Cor1}, to the case where $f$ is compactly supported. 
 Define the process $Z=\{Z_{t}\}_{t\geq0}$, by
\begin{align}\label{def:Z}
Z_{t}
  &=\kappa_{\nu,\ell}\sigma_{\ell}\int_{0}^{t}f^{(2\ell+1)}(X_{s})dY_{s}.
\end{align}
By \eqref{eq:approxItoformula}, it suffices to show that for all $f\in \mathcal{C}^{8\ell+2}(\R)$, the following claims hold:
\begin{enumerate}
\item If $\alpha=\frac{1}{2\ell+1}$, 
\begin{align}\label{eq:tighteq1}
\{\sum_{h=\ell}^{2\ell}\Phi_{n}^{h}(t)\}_{t\geq0}\stackrel{stably}{\rightarrow}\{ Z_t\}_{t\geq0}~~~~~~\text{as}~n\rightarrow\infty,
\end{align}
in the topology of $\textbf{D}[0,\infty)$.
\item If $\alpha>\frac{1}{2\ell+1}$, then for every $t\ge 0$
\begin{align}\label{eq:tighteq2}
\sum_{h=\ell}^{2\ell}\Phi_{n}^{h}(t)\stackrel{\Pb}{\rightarrow}0,~~~~~~\text{as}~n\rightarrow\infty.
\end{align}
\end{enumerate}
 Notice  that the convergences \eqref{eq:tighteq1}  and \eqref{eq:tighteq2} hold, provided that:
\begin{enumerate}
\item If $\alpha=\frac{1}{2\ell+1}$, then,
\begin{enumerate}
\item[a)] For every $h=\ell,\dots, 2\ell$, the sequence $\Phi_{n}^{h}$ is tight in $\textbf{D}[0,\infty)$.
\item[b)] The finite dimensional distributions of $\Phi_{n}^{\ell}$ converge stably to those of $Z$. 
\item[c)] For every $h=\ell+1,\dots,2\ell$ and $t\ge0$, the sequence $\Phi_{n}^{h}(t)$ converges to zero in probability.
\end{enumerate}
\item If $\alpha>\frac{1}{2\ell+1}$, then $\Phi_{n}^{h}(t)$ converges to zero in probability for every $h=\ell,\dots,2\ell$ and $t>0$.
\end{enumerate}
In turn, these conditions are a consequence of the following claims:
\begin{enumerate}[(i)]
\item For every $\varepsilon, T>0$ and $h=\ell,\dots, 2\ell$, there is a compact set $K\subset \textbf{D}[0, T]$, such that 
\begin{align*}
\sup_{n\geq 1}\mathbb{P}\left[\Phi_{n}^{h}\in K^{c}\right]<\varepsilon.
\end{align*}
\item For every $\varepsilon,\delta>0$, $t\geq0$ and $h=\ell+1,\dots, 2\ell$, there exists $N\in\mathbb{N}$, such that for every $n\geq N$, 
\begin{align*}
\mathbb{P}\left[\Abs{\Phi_{n}^{h}(t)}>\delta\right]<\varepsilon.
\end{align*} 
\item Let $\varepsilon>0$ and $0\leq t_{1}\leq\cdots\leq t_{d}\leq T$ be fixed. If $\alpha=\frac{1}{2\ell+1}$, then for every compactly supported function $\phi\in {\cal{C}}^{1}(\mathbb{R}^{d},\mathbb{R})$, and every event $B\in \sigma(X)$, there exists $N\in\mathbb{N}$, such that for $n\geq N$,
\begin{align}\label{conv:fdd}
\Abs{\mathbb{E}\left[(\phi(\Phi_{n}^{\ell}(t_{1}),\dots, \Phi_{n}^{\ell}(t_{d}))-\phi(Z_{t_{1}},\dots, Z_{t_d}))\Indi{B}\right]}
  &<\varepsilon.
\end{align}
\item If $\alpha>\frac{1}{2\ell+1}$, then for every $\varepsilon,\delta>0$, $t\geq0$ there exists $N\in\mathbb{N}$, such that for every $n\geq N$, 
\begin{align*}
\mathbb{P}\left[\Abs{\Phi_{n}^{\ell}(t)}>\delta\right]<\varepsilon.
\end{align*} 
\end{enumerate}

Recall that $\Phi_{n}^{h}$ depends on $f$ via \eqref{def:Phi}. We claim that it suffices to show conditions (i)-(iv) for $f$ compactly supported. Suppose that (i)-(iv) hold for every function in ${\cal{C}}^{8\ell+2}(\mathbb{R})$ with compact support, and take a general element $g\in {\cal{C}}^{8\ell+2}(\mathbb{R})$. Fix $L\ge 1$ and let $g_{L}:\mathbb{R}\rightarrow\mathbb{R}$ be a compactly supported function, with derivatives up to order $8\ell+2$, such that $g_{L}(x)=g(x)$ for every $x\in[-L,L]$, and define the processes $\widetilde{\Phi}_{n}^{h,L}=\{\widetilde{\Phi}_{n}^{h,L}(t)\}_{t\geq0}$, $h=\ell,\dots, 2\ell$ and $\widetilde{Z}^{L}=\{\widetilde{Z}_{t}^{L}\}_{t\geq0}$,  by
\begin{align*}
\widetilde{\Phi}_{n}^{h,L}
  &=k_{\nu,h}\sum_{j=0}^{\floor{nt}-1}g_{L}^{(2h+1)}(\widetilde{X}_{\frac{j}{n}})(\DXj)^{2h+1},
\end{align*} 
and
\begin{align*}
\widetilde{Z}_{t}^{L}
  &=\kappa_{\nu,\ell}\sigma_{\ell}\int_{0}^{t}g_{L}^{(2\ell+1)}(X_{s})dY_{s}.
\end{align*}

Fix $T>0$ and define as well the events $A_{L,T}=\{\sup_{0\leq s\leq T}\Abs{X_{s}}\leq L\}$. Then, for every  $\varepsilon>0$,  there exists a compact set $K_{L}\subset \textbf{D}[0,T]$  such that  for all $h=\ell,\dots,2\ell$
\begin{align}
\sup_{n\geq1}\mathbb{P}\left[\widetilde{\Phi}_{n}^{h,L}\in K_{L}^{c}\right]
  &<\frac{\varepsilon}{2}  \label{ineq:tightdef}.
\end{align}
Since $\Phi_{n}^{h}=\widetilde{\Phi}_{n}^{h,L}$ in $A_{L,T}$, we have 
\begin{eqnarray*}
\mathbb{P}\left[\Phi_{n}^{h}\in K_{L}^{c}\right]
  &\leq &\mathbb{P}\left[\Phi_{n}^{h} \in K_{L}^{c},A_{L,T}\right]+\mathbb{P}\left[A_{L,T}^{c}\right]=\mathbb{P}\left[\widetilde{\Phi}_{n}^{h,L}\in K_{L}^{c},A_{L,T}\right]+\mathbb{P}\left[A_{L,T}^{c}\right] \\
  &\le&  \frac \varepsilon 2+\mathbb{P}\left[A_{L,T}^{c}\right] \le \varepsilon,
\end{eqnarray*}
if $L$ is large enough. This proves property (i) for $g$.

Given $t\in [0,T]$, for every  $\varepsilon>0$ there exists a constant $N_{L}>0$, such that for every $n\geq N_{L}$ and  for every $h=\ell+1,\dots,2\ell$,
\begin{align}\label{ineq:Phiconvdef}
\sup_{n\geq1}\mathbb{P}\left[\Abs{\widetilde{\Phi}_{n}^{h,L}(t)}>\delta\right]
  &<\frac{\varepsilon}{2}.
\end{align}
Again, this implies that
\begin{eqnarray*}
\mathbb{P}\left[ \left|  \Phi_{n}^{h}(t) \right| >\delta \right]
  &\leq &\mathbb{P}\left[  \left|  \Phi_{n}^{h}(t) \right| >\delta,A_{L,T}\right]+\mathbb{P}\left[A_{L,T}^{c}\right]=\mathbb{P}\left[\left| \widetilde{\Phi}_{n}^{h,L} (t) \right| >\delta,A_{L,T}\right]+\mathbb{P}\left[A_{L,T}^{c}\right] \\
  &\le&  \frac \varepsilon 2+\mathbb{P}\left[A_{L,T}^{c}\right] \le \varepsilon,
\end{eqnarray*}
if $L$ is large enough, which proves property (ii) for $g$.

Moreover, if $\alpha=\frac{1}{2\ell+1}$, then for every $0\leq t_{1}\leq \cdots\leq t_{d}\leq T$ there exists $M_{L}\in\N$, such that for all $n\geq M_{L}$,
\begin{align}
\Abs{\mathbb{E}\left[(\phi(\widetilde{\Phi}_{n}^{\ell,L}(t_{1}),\dots, \widetilde{\Phi}_{n}^{\ell,L}(t_{d}))-\phi(\widetilde{Z}_{t_1}^{L},\dots, \widetilde{Z}_{t_d}^{L}))\Indi{B\cap A_{L,T}}\right]}
  &<\frac{\varepsilon}{2},\label{ineq:fddtail}
\end{align}
and if  $t\in [0,T]$ and $\alpha>\frac{1}{2\ell+1}$, there exists $R_{L}\in\N$, such that for all $n\geq R_{L}$,
\begin{align}\label{ineq:Phiconvdef2}
\sup_{n\geq1}\mathbb{P}\left[\Abs{\widetilde{\Phi}_{n}^{\ell,L}(t)}>\delta\right]
  &<\frac{\varepsilon}{2}.
\end{align}
 Similarly, we have
\begin{eqnarray*}
&& \Abs{\mathbb{E}\left[(\phi(\Phi_{n}^{\ell}(t_{1}),\dots, \Phi_{n}^{\ell}(t_{d}))-\phi(Z_{t_{1}},\dots, Z_{t_d}))\Indi{B}\right]}\\
 && \leq \Abs{\mathbb{E}\left[(\phi(\widetilde{\Phi}_{n}^{\ell,L}(t_{1}),\dots, \widetilde{\Phi}_{n}^{\ell,L}(t_{d}))-\phi(\widetilde{Z}_{t_1}^{L},\dots, \widetilde{Z}_{t_d}^{L}))\Indi{B}\Indi{A_{L,T}}\right]}+2\sup_{x\in\mathbb{R}^{d}}\Abs{\phi(x)}\mathbb{P}\left[A_{L,T}^{c}\right] \\
 &&\le \frac {\varepsilon} 2 +2\sup_{x\in\mathbb{R}^{d}}\Abs{\phi(x)}\mathbb{P}\left[A_{L,T}^{c}\right] 
\end{eqnarray*}
and 
\begin{eqnarray*}
\mathbb{P}\left[\Abs{\Phi_{n}^{h}(t)}>\delta\right]
  &\leq & \mathbb{P}\left[\Abs{\Phi_{n}^{h}(t)}>\delta,A_{L,T}\right]+\mathbb{P}\left[A_{L,T}^{c}\right]=\mathbb{P}\left[\Abs{\widetilde{\Phi}_{n}^{h,L}(t)}>\delta,A_{L}\right]+\mathbb{P}\left[A_{L,T}^{c}\right]\\
  &\le &   \frac {\varepsilon} 2 +\mathbb{P}\left[A_{L,T}^{c}\right].
\end{eqnarray*}
Taking $L$ large enough we conclude that properties (iii) and (iv) hold for $g$.

Therefore, we can assume without loss of generality that $f$ has compact support.  Relations (i), (ii) and (iv), for $f$ compactly supported follow from Lemma \ref{lem:tight}, while relation (iii) follows from Lemma  \ref{lem:fdd}. Modulo these two lemmas, which we state below, the proof of Theorem \ref{thm1} is now complete.
\begin{lemma}\label{lem:tight}
Assume that $\alpha\geq\frac{1}{2h+1}$. Consider the process $\Phi_{n}^{h}$, $h=\ell,\dots, 2\ell$ defined in \eqref{def:Phi}, for $f\in {\cal{C}}^{8\ell+2}(\mathbb{R})$ with compact support. Then,  
\begin{enumerate}
\item The sequence of processes $\{\Phi_{n}^{h}\}_{n\geq1}$, is tight in $\textbf{D}[0,\infty)$, for $h=\ell,\dots, 2\ell$.
\item If $h\geq \ell+1$, then $\Phi_{n}^{h}\stackrel{\mathbb{P}}{\rightarrow}0$, in the topology of $\textbf{D}[0,\infty)$, as $n\rightarrow\infty$.
\item If $\alpha>\frac{1}{2\ell+1}$, then $\Phi_{n}^{\ell}\stackrel{\mathbb{P}}{\rightarrow}0$, in the topology of $\textbf{D}[0,\infty)$, as $n\rightarrow\infty$.
\end{enumerate}
\end{lemma}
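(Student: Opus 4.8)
The plan is to reduce all three assertions to a single family of $L^{2m}$ estimates for the increments of $\Phi_n^h$, obtained by combining the Wiener chaos decomposition of odd powers with the integration by parts formula of Lemma~\ref{lem:Fdelta} and the Meyer inequalities \eqref{eq:Meyer}. For the tightness in part~(1) I would use the standard moment criterion for $\mathbf{D}[0,\infty)$: since $\Phi_n^h$ is a piecewise-constant process with $\Phi_n^h(0)=0$ and jumps of order $n^{-(2h+1)\alpha/2}$, it suffices to find an even integer $2m$, taken large enough, and a constant $C$ with
\[
\E\left[\Abs{\Phi_n^h(t)-\Phi_n^h(s)}^{2m}\right]\le C\,(t-s)^{m},\qquad 0\le s\le t\le T,
\]
uniformly in $n$; by Cauchy--Schwarz this controls the two-increment product $\E[\Abs{\Phi_n^h(s)-\Phi_n^h(r)}^{m}\Abs{\Phi_n^h(t)-\Phi_n^h(s)}^{m}]$ by $C(t-r)^{m}$ with $m\ge 2>1$. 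Parts~(2) and~(3) then follow from part~(1) together with the vanishing of the one-dimensional marginals, namely $\E[\Phi_n^h(t)^2]\to0$, since the candidate limit $0$ is continuous.

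To produce these estimates I would expand each summand of \eqref{def:Phi}. By the Hermite expansion \eqref{eq:hermite_expansion}, exactly as in Section~\ref{eq:convergence_variations},
\[
\DXj^{2h+1}=\sum_{r=0}^{h}c_{r,h}\,\xi_{j,n}^{2r}\,I_{q_r}\!\left(\partial_{\frac jn}^{\otimes q_r}\right),\qquad q_r=2(h-r)+1,
\]
and then apply Lemma~\ref{lem:Fdelta} with $F=f^{(2h+1)}(\widetilde X_{\frac jn})$ and $u=\partial_{\frac jn}^{\otimes q_r}$. Since $D^aF=f^{(2h+1+a)}(\widetilde X_{\frac jn})\,\widetilde\varepsilon_{\frac jn}^{\otimes a}$, each product $f^{(2h+1)}(\widetilde X_{\frac jn})\DXj^{2h+1}$ unfolds into $\sum_{p=0}^{q_r}\binom{q_r}{p}\delta^{p}(\cdot)$, whose $\delta^p$-kernel carries a deterministic weight $\Ip{\widetilde\varepsilon_{\frac jn},\partial_{\frac jn}}_{\hten}^{q_r-p}\xi_{j,n}^{2r}$ and a bounded random factor $f^{(2h+1+q_r-p)}(\widetilde X_{\frac jn})$. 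Summing over $j$ and $r$ gives $\Phi_n^h(t)-\Phi_n^h(s)=\sum_{p\ge0}\delta^{p}(\Psi_n^{h,p})$ with $\Psi_n^{h,p}=\sum_{j=\floor{ns}}^{\floor{nt}-1} b_j\,\partial_{\frac jn}^{\otimes p}$ and $\Abs{b_j}\le C\Abs{\Ip{\widetilde\varepsilon_{\frac jn},\partial_{\frac jn}}_{\hten}}^{q_r-p}\xi_{j,n}^{2r}$ deterministically.

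By \eqref{eq:Meyer} it then suffices to bound $\Norm{\Psi_n^{h,p}}_{\mathbb{D}^{p,2m}(\hten^{\otimes p})}$. Because $f$ has compact support, every Malliavin derivative $D^i\Psi_n^{h,p}$ merely replaces $f^{(\cdots)}$ by a higher---still bounded---derivative and appends copies of $\widetilde\varepsilon_{\frac jn}$; the highest order that appears is $f^{(4h+2)}$, which for $h=2\ell$ is $f^{(8\ell+2)}$ and accounts precisely for the hypothesis $f\in\mathcal{C}^{8\ell+2}$. Factoring out the uniform sup-norms of the $f$-derivatives leaves a purely deterministic estimate of the form
\[
\E\left[\Norm{\Psi_n^{h,p}}_{\hten^{\otimes p}}^{2m}\right]\le C\left(\sum_{j,k=\floor{ns}}^{\floor{nt}-1}\Abs{b_j}\,\Abs{b_k}\,\Abs{\Ip{\partial_{\frac jn},\partial_{\frac kn}}_{\hten}}^{p}\right)^{m},
\]
and likewise for the derivative terms. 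Using the Appendix estimates \eqref{ineq:varpartial}, \eqref{ineq:covpartial1}, \eqref{ineq:covpartial2} one has $\xi_{j,n}^{2}\le Cn^{-\alpha}$ and $\Abs{\Ip{\widetilde\varepsilon_{\frac jn},\partial_{\frac jn}}_{\hten}}\le Cn^{-2\beta}$, while Lemma~\ref{lem:sum_est1} gives $\sup_j\sum_k\Abs{\Ip{\partial_{\frac jn},\partial_{\frac kn}}_{\hten}}\le Cn^{-\alpha}$, so that the inner double sum is at most $C\,n(t-s)\,n^{-\alpha p}$ uniformly in the window position. Tracking the powers of $n$ shows that the dominant contribution comes from the uncontracted terms ($p=q_r$), for which the inner sum is of order $n^{1-\alpha(2h+1)}(t-s)$; raising to the $m$-th power and invoking $\alpha\ge\frac{1}{2h+1}$ makes the prefactor $n^{m(1-\alpha(2h+1))}$ bounded, yielding $C(t-s)^m$ and hence part~(1). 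For parts~(2) and~(3) the same computation with $2m=2$ and a single window $[0,t)$ gives $\E[\Phi_n^h(t)^2]\le C\,n^{1-\alpha(2h+1)}t$, and the strict inequality $(2h+1)\alpha>1$---valid when $h\ge\ell+1$, and when $h=\ell$ under $\alpha>\frac{1}{2\ell+1}$---forces this to zero.

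The main obstacle is the double-sum estimate under non-stationarity. Since $X$ is not stationary, $\Ip{\partial_{\frac jn},\partial_{\frac kn}}_{\hten}$ depends on $j$ and $k$ individually rather than on $j-k$, so the translation-invariant lag sums used for fractional Brownian motion in \cite{BNN} are not available; the delicate point is to split the sum into a near-diagonal regime governed by \eqref{ineq:covpartial1} and a far regime governed by \eqref{ineq:covpartial2}, and to verify that each produces a clean power of $(t-s)$ with a bound uniform both in the location of the window and in $n$. Securing the full $\mathbb{D}^{p,2m}$ norms---rather than merely second moments---simultaneously for all $h\le 2\ell$ is what forces the higher smoothness of $f$ and constitutes the technical core of the argument; this is also where the improvement over \cite{BNN}, allowing $f\in\mathcal{C}^{8\ell+2}$, is realized.
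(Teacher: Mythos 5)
Your strategy is essentially the paper's: expand $\DXj^{2h+1}$ in Hermite polynomials, move $f^{(2h+1)}(\widetilde{X}_{\frac jn})$ inside the Skorohod integrals via Lemma \ref{lem:Fdelta}, estimate increments through Meyer's inequality \eqref{eq:Meyer} and the Appendix covariance bounds, and conclude with a moment criterion. However, there is a genuine gap in your treatment of the fully contracted term $p=0$ --- the one piece of the decomposition that is not a Skorohod integral but a pathwise sum, and precisely the term the paper must treat separately. For $p=0$ your kernel bound is $\Abs{b_j}\le C\Abs{\Ip{\widetilde{\varepsilon}_{\frac jn},\partial_{\frac jn}}_{\hten}}^{q_r}\xi_{j,n}^{2r}$, and you then combine the uniform estimate $\Abs{\Ip{\widetilde{\varepsilon}_{\frac jn},\partial_{\frac jn}}_{\hten}}\le Cn^{-2\beta}$ with the count of roughly $n(t-s)$ terms. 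This yields
\begin{align*}
\sum_{j=\floor{ns}}^{\floor{nt}-1}\Abs{b_j}\le C\,n(t-s)\,n^{-\alpha r-2\beta q_r},
\end{align*}
and at the critical point $2\beta=\alpha=\frac{1}{2h+1}$ the exponent of $n$ is $1-\alpha(r+q_r)=1-\alpha(2h+1-r)=\alpha r$, which is \emph{strictly positive} for every $r\ge 1$. Concretely, for $h=\ell=1$, $r=1$, $\alpha=2\beta=\frac13$, the term $\sum_j\xi_{j,n}^{2}f^{(4)}(\widetilde{X}_{\frac jn})\Ip{\widetilde{\varepsilon}_{\frac jn},\partial_{\frac jn}}_{\hten}$ receives from your estimate only the divergent bound $n^{1/3}$, so your claimed uniform $C(t-s)^m$ control (and, for parts (2)--(3), the bound $\E[\Phi_n^h(t)^2]\le Cn^{1-\alpha(2h+1)}t$) does not follow. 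What you are missing is that $\Ip{\partial_{\frac jn},\widetilde{\varepsilon}_{\frac jn}}_{\hten}=\frac{\phi(1)}{n^{2\beta}}\bigl((j+1)^{2\beta}-j^{2\beta}\bigr)$ telescopes: it decays in $j$, and summing over $j\le \floor{nT}-1$ produces $(\floor{nT}/n)^{2\beta}\le T^{2\beta}$ rather than $n^{1-2\beta}$. This is exactly Lemma \ref{lem:aux} (estimate \eqref{ineq:covpartialepsilon}), which your toolkit never invokes. The paper accordingly splits $\Phi_n^h=\Psi_n^h+R_n^h$, shows via the telescoping bound that $\sup_{t\in[0,T]}\Abs{R_n^h(t)}\to0$ in $L^1$ at rate $C(n^{-\alpha h}+hn^{-4\beta})$ (see \eqref{lim:SupR}), and runs the moment and tightness argument only for $\Psi_n^h$, i.e.\ for the sums with $p\ge1$, where your power counting is correct and the dominant contribution is indeed $n^{1-\alpha(2h+1)}(t-s)$.

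A secondary, repairable flaw: the uniform-in-$n$ bound $\E\bigl[\Abs{\Phi_n^h(t)-\Phi_n^h(s)}^{2m}\bigr]\le C(t-s)^m$ is false when $t-s<\frac1n$ and $(s,t]$ straddles a grid point, since a single jump has $L^{2m}$-norm of order $n^{-1/2}$ at criticality while $(t-s)^m$ can be arbitrarily small. The correct modulus, as in \eqref{ineq:Psibound}, involves $\bigl(\frac{\floor{nt}-\floor{ns}}{n}\bigr)^{m}$, and Billingsley's Theorem 13.5 accommodates this form: in the two-increment condition, both increments can be nonzero only when $t-r\ge\frac1n$, in which case $\frac{\floor{nt}-\floor{nr}}{n}\le 2(t-r)$. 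With this correction, and with the $\delta^0$ remainder handled through Lemma \ref{lem:aux} as above, your argument becomes the paper's proof.
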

\begin{proof}
Fix $h$, $\ell \le h \le 2\ell$. 
As in Section \ref{eq:convergence_variations}, $c_{0,h},\dots,c_{h,h}$ will denote the coefficients of the Hermite expansion of $x^{2h+1}$, namely,  
\begin{align*}
x^{2h+1}
  &=\sum_{u=0}^{h}c_{u,h}H_{2(h-u)+1}(x).
\end{align*}
Then, by \req{hmap}, we can write 
\[ \frac{\DXj^{2h +1}}{\xi_{j,n}^{2h+1}} = \sum_{u=0}^{h} c_{u,h}~H_{2(h-u)+1}\left(\frac{\DXj}{\xi_{j,n}}\right) = \sum_{u=0}^h c_{u,h}~\delta\left(\frac{\partial_{\frac jn}^{\otimes 2(h-u)+1}}{\xi_{j,n}^{2(h-u)+1}}\right).\]
To prove the result, we use the above relation to write the process $\Phi_{n}^{h}$ as a sum of multiple Skorohod integrals plus a remainder term that converges uniformly to zero on compact intervals. Indeed, we can write, for $h=\ell,\dots, 2\ell$, 
\begin{align*}
\Phi_{n}^{h}(t)
  &=\kappa_{\nu,h}\sum_{j=0}^{\floor{nt}-1}\sum_{u=0}^{h}c_{u,h}\xi_{j,n}^{2u}f^{(2h+1)}(\widetilde{X}_{\frac{j}{n}})\delta^{2h+1-2u}(\partial_{\frac{j}{n}}^{\otimes 2h+1-2u}).
\end{align*}
Hence, applying Lemma \ref{lem:Fdelta} with $F=f^{(2h+1)}(\widetilde{X}_{\frac{j}{n}})$, $q=2h+1-2u$ and $u=\partial_{\frac{j}{n}}^{\otimes 2h+1-2u}$, we obtain 
\begin{align}\label{eq:Phitheta}
\Phi_{n}^{h}(t)
  &=\kappa_{\nu,h}\sum_{u=0}^{h}\sum_{r=0}^{2h+1-2u}\binom{2h+1-2u}{r}c_{u,h}\Theta_{u,r}^{n}(t),
\end{align}
where the random variable $\Theta_{u,r}^{n}(t)$, for $h=\ell,\dots, 2\ell$ fixed, is defined by
\begin{align*}
\Theta_{u,r}^{n}(t)
  &=\delta^{2h+1-2u-r}\bigg(\sum_{j=0}^{\floor{nt}-1}\xi_{j,n}^{2u} f^{(2h+1+r)}(\widetilde{X}_{\frac{j}{n}})\partial_{\frac{j}{n}}^{\otimes 2h+1-2u-r}\Ip{\widetilde{\varepsilon}_{\frac{j}{n}},\partial_{\frac{j}{n}}}_{\hten}^{r}\bigg).
\end{align*} 
By \eqref{eq:Phitheta}, we can decompose the process $\Phi_{n}^{h}(t)$, as 
\begin{align}\label{eq:Phidecomp}
\Phi_{n}^{h}(t)
  &=\Psi_{n}^{h}(t)+R_{n}^{h}(t),
\end{align}
where 
\begin{align}\label{eq:psidef}
\Psi_{n}^{h}(t)
  &=\kappa_{\nu,h}\sum_{u=0}^{h}\sum_{r=0}^{2h-2u}\binom{2h+1-2u}{r}c_{u,h}\Theta_{u,r}^{n}(t),
\end{align}
and 
\[
R_{n}^{h}(t)
  =\kappa_{\nu,h}\sum_{u=0}^{h}c_{u,h}\sum_{j=0}^{\floor{nt}-1}\xi_{j,n}^{2u}f^{(4h+2-2u)}(\widetilde{X}_{\frac{j}{n}})\Ip{\widetilde{\varepsilon}_{\frac{j}{n}},\partial_{\frac{j}{n}}}_{\hten}^{2h+1-2u}.
\]
Therefore, to prove the lemma, it suffices to show the following four claims:
\begin{enumerate}
\item[(a)] The process $R_{n}^{h}=\{R_{n}^{h}(t)\}_{t\geq0}$ converges uniformly to zero in $L^{1}(\Omega)$ on compact intervals, namely, 
for each $T>0$,
\begin{align*}
\mathbb{E}\left[\sup_{t\in[0,T]}\Abs{R_{n}^{h}(t)}\right]\rightarrow0
\end{align*}
\item[(b)] The process $\Psi_{n}^{h}=\{\Psi_{n}^{h}(t)\}_{t\geq0}$ is tight in $\textbf{D}[0,\infty)$ for all $\ell\leq h\leq 2\ell$.
\item[(c)] The process $\Psi_{n}^{h}=\{\Psi_{n}^{h}(t)\}_{t\geq0}$ converges to zero in $\textbf{D}[0,\infty)$ for $\ell+1\leq h\leq 2\ell$.
\item[(d)] If $\alpha>\frac{1}{2\ell+1}$, then the process $\Psi_{n}^{\ell}=\{\Psi_{n}^{\ell}(t)\}_{t\geq0}$ converges to zero  in probability in $\textbf{D}[0,\infty)$.
\end{enumerate}

\noindent
{\it Proof of claim (a):} Using inequality \eqref{ineq:varpartial}, as well as the fact that $f$ has compact support, we deduce that 
\begin{align*}
\mathbb{E}\left[\sup_{t\in[0,T]}\Abs{R_{n}^{h}(t)}\right]
  &\leq C\sum_{u=0}^{h}\sum_{j=0}^{\floor{nT}-1}\mathbb{E}\left[\Abs{f^{(4h+2-2u)}(\widetilde{X}_{\frac{j}{n}})}\right]\xi_{j,n}^{2u}\Abs{\Ip{\widetilde{\varepsilon}_{\frac{j}{n}},\partial_{\frac{j}{n}}}_{\hten}}^{2h+1-2u}\\
	&\leq C\sum_{u=0}^{h}\sum_{j=0}^{\floor{nT}-1}n^{-\alpha  u}\Abs{\Ip{\widetilde{\varepsilon}_{\frac{j}{n}},\partial_{\frac{j}{n}}}_{\hten}}^{2h+1-2u}.
\end{align*}
Hence, by inequality \eqref{ineq:covpartialepsilon}, there exists a constant $C>0$, such that
\begin{align}
\mathbb{E}\left[\sup_{t\in[0,T]}\Abs{R_{n}^{h,m}(t)}\right]
  &\leq C\sum_{u=0}^{h}n^{-\alpha u-4\beta(h-u)}\nonumber\\
	&=C(n^{-\alpha h}+\sum_{u=0}^{h-1}n^{-\alpha u}n^{-4\beta(h-u)})\leq C(n^{-\alpha h}+hn^{-4\beta})\label{lim:SupR},
\end{align}
which implies that $\sup_{t\in[0,T]}R_{n}^{h}$ converges to zero in $L^{1}(\Omega)$, as required.\\

\noindent
{\it Proof of claims (b), (c) and (d):} 
Since $h\geq \ell$ and $\alpha\geq(2\ell+1)^{-1}$, by the `Billingsley criterion' (see \cite[Theorem 13.5]{Billingsley}), it suffices to show that for every $0\leq s\leq t\leq T$, and $p>2$, there exists a constant $C>0$, such that
\begin{align}\label{ineq:Psibound}
\mathbb{E}\left[\Abs{\Psi_{n}^{h}(t)-\Psi_{n}^{h}(s)}^{p}\right]\leq Cn^{\frac{p}{2}(1-\alpha(2h+1))}\Abs{\frac{\floor{nt}-\floor{ns}}{n}}^{\frac{p}{2}}.
\end{align}
Indeed, relation \eqref{ineq:Psibound} implies that 
$$\mathbb{E}\left[\Abs{\Psi_{n}^{h}(t)-\Psi_{n}^{h}(s)}^{p}\right ] \leq C\Abs{\frac{\floor{nt}-\floor{ns}}{n}}^{\frac{p}{2}},$$
so that $\Psi_{n}^{h}$ is tight. Moreover, if $\ell+1\geq h$ or $\alpha>\frac{1}{2\ell+1}$, then $\E\left[\Abs{\Psi_{n}^{h}(t)-\Psi_{n}^{h}(w)}^{p}\right]\rightarrow0$ as $n\rightarrow\infty$, which implies conditions (c) and (d).

To prove \eqref{ineq:Psibound} we proceed as follows. By \eqref{eq:psidef}, there exists a constant $C>0$, only depending on $h, \nu$ and $T$, such that 
\begin{align}\label{eq:Phiinc}
\Norm{\Psi_{n}^{h}(t)-\Psi_{n}^{h}(s)}_{L^{p}(\Omega)}
  \leq C\max_{\substack{0\leq u\leq h\\0\leq r\leq 2h-2u}}\Norm{\Theta_{u,r}^{n}(t)-\Theta_{u,r}^{n}(s)}_{L^{p}(\Omega)}.
\end{align}
For $0\leq u\leq h$ and $0\leq r\leq 2h-2u$, define the constant $w=2h+1-2u-r\geq1$. By Meyer's inequality \eqref{eq:Meyer}, we have the following bound for the $L^{p}$-norm appearing in the right-hand side of \eqref{eq:Phiinc}.
\begin{multline}\label{ineq:Meyer}
\Norm{\Theta_{u,r}^{n}(t)-\Theta_{u,r}^{n}(s)}_{L^{p}(\Omega)}^2\\
\begin{aligned}
  &=\Norm{\delta^{w}\left(\sum_{j=\floor{ns}}^{\floor{nt}-1}\xi_{j,n}^{2u}f^{(2h+1+r)}(\widetilde{X}_{\frac{j}{n}})\partial_{\frac{j}{n}}^{\otimes w}\Ip{\widetilde{\varepsilon}_{\frac{j}{n}},\partial_{\frac{j}{n}}}_{\hten}^{r}\right)}_{L^{p}(\Omega)}^{2}\\
  &\leq C\sum_{i=0}^{w}\bigg\lVert\sum_{j=\floor{ns}}^{\floor{nt}-1}\xi_{j,n}^{2u}f^{(2h+1+r+i)}(\widetilde{X}_{\frac{j}{n}})\partial_{\frac{j}{n}}^{\otimes w}\otimes \widetilde{\varepsilon}_{\frac{j}{n}}^{\otimes i}\Ip{\widetilde{\varepsilon}_{\frac{j}{n}},\partial_{\frac{j}{n}}}_{\hten}^{r}\bigg\rVert_{L^{p}(\Omega,\hten^{\otimes (w+i)})}^{2}\\
	&= C\sum_{i=0}^{w}\Norm{\Norm{\sum_{j=\floor{ns}}^{\floor{nt}-1}\xi_{j,n}^{2u}f^{(2h+1+r+i)}(\widetilde{X}_{\frac{j}{n}})\partial_{\frac{j}{n}}^{\otimes w}\otimes \widetilde{\varepsilon}_{\frac{j}{n}}^{\otimes i}\Ip{\widetilde{\varepsilon}_{\frac{j}{n}},\partial_{\frac{j}{n}}}_{\hten}^{r}}_{\hten^{\otimes (w+i)}}^2}_{L^{\frac{p}{2}}(\Omega)}.
\end{aligned}
\end{multline} 
From the previous relation, it follows that there exists a constant $C>0$, such that
\begin{eqnarray}   \notag
\Norm{\Theta_{u,r}^{n}(t)-\Theta_{u,r}^{n}(s)}_{L^{p}(\Omega)}^2
  &\leq  & C\sum_{i=0}^{w}\bigg\lVert\sum_{j,k=\floor{ns}}^{\floor{nt}-1}\xi_{j,n}^{2u}\xi_{k,n}^{2u}f^{(2h+1+r+i)}(\widetilde{X}_{\frac{j}{n}})f^{(2h+1+r+i)}(\widetilde{X}_{\frac{k}{n}})\\
 && \!\!\!\!\!\!  \!\!\!\times \Ip{\partial_{\frac{j}{n}},\partial_{\frac{k}{n}}}_{\hten}^{w}\Ip{\widetilde{\varepsilon}_{\frac{j}{n}},\widetilde{\varepsilon}_{\frac{k}{n}}}_{\hten}^{i}\Ip{\widetilde{\varepsilon}_{\frac{j}{n}},\partial_{\frac{j}{n}}}_{\hten}^{r}\Ip{\widetilde{\varepsilon}_{\frac{k}{n}},\partial_{\frac{k}{n}}}_{\hten}^{r}\bigg\rVert_{L^{\frac{p}{2}}(\Omega)}. \label{ineq:normdelta0}
\end{eqnarray}
Since $f$ has compact support, by applying Minkowski inequality and Cauchy-Schwarz inequality in \eqref{ineq:normdelta0}, we deduce that 
\[
\Norm{\Theta_{u,r}^{n}(t)-\Theta_{u,r}^{n}(s)}_{L^{p}(\Omega)}^2\leq C\sum_{i=0}^{w}\sum_{j,k=\floor{ns}}^{\floor{nt}-1}\xi_{j,n}^{2u+r}\xi_{k,n}^{2u+r}\Norm{\widetilde{\varepsilon}_{\frac{j}{n}}}_{\hten}^{i+r}\Norm{\widetilde{\varepsilon}_{\frac{k}{n}}}_{\hten}^{i+r}\Abs{\Ip{\partial_{\frac{j}{n}},\partial_{\frac{k}{n}}}_{\hten}}^{w}.
\]
From here, using the Cauchy Schwarz inequality, it follows that  
\begin{align*}
\Norm{\Theta_{u,r}^{n}(t)-\Theta_{u,r}^{n}(s)}_{L^{p}(\Omega)}^2
  &\leq C\sum_{j,k=\floor{ns}}^{\floor{nt}-1}\xi_{j,n}^{2u+r}\xi_{k,n}^{2u+r}\Abs{\Ip{\partial_{\frac{j}{n}},\partial_{\frac{k}{n}}}_{\hten}}^{w}\\
	&\leq C \sum_{j,k=\floor{ns}}^{\floor{nt}-1}\xi_{j,n}^{2h}\xi_{k,n}^{2h}\Abs{\Ip{\partial_{\frac{j}{n}},\partial_{\frac{k}{n}}}_{\hten}}.
\end{align*}
Consequently,  we get
\begin{align}\label{ineq:normdelta}
\Norm{\Theta_{u,r}^{n}(t)-\Theta_{u,r}^{n}(s)}_{L^{p}(\Omega)}^2
  &\leq C\sum_{x=0}^{\floor{nt}-\floor{ns}-1}\sum_{j=\floor{ns}}^{\floor{nt}-1-x}\xi_{j,n}^{2h}\xi_{j+x,n}^{2h}\Abs{\Ip{\partial_{\frac{j}{n}},\partial_{\frac{j+x}{n}}}_{\hten}}.
\end{align}
Then  the estimate (\ref{ineq:Psibound}) will follow from
\begin{align}\label{eq:Gtildebound}
 \xi_{j,n}^{2h}\xi_{j+x,n}^{2h}\Abs{\Ip{\partial_{\frac{j}{n}},\partial_{\frac{j+x}{n}}} _{\hten}} \le Cn^{-\alpha(2h+1)} x^{-1-\delta},
\end{align}
for some $\delta >0$ and for all $x\ge 3$ and $\floor{ns} \le j \le \floor{nt}-1$.
Set 
\[
\widehat{G}(j,j+x)=\xi_{j,n}^{2h}\xi_{j+x,n}^{2h}|\Ip{\partial_{\frac{j}{n}},\partial_{\frac{j+x}{n}}}_{\Hg}|.
\]
 By  considering the  cases $j=0$, $j\geq x+2$ and $1\leq j\leq x+2$, for $x\geq 3$, we obtain the following bounds:\\

\noindent Case $j=0$: 
Using \eqref{ec2} and \eqref{ineq:varpartial}, we get 
\begin{align*}
\widehat{G}(0,x)
  &\leq Cn^{-(2\alpha h+2\beta)}|\phi(x+1)-\phi(x)|\\
	&\leq Cn^{-\alpha(2h+1)}x^{-\nu}.
\end{align*}

\noindent Case $j\geq x+2$:
Using \eqref{ineq:covpartial1}, we deduce  that for every $j\geq x-2$, 
\begin{align*}
\widehat{G}(j,x)
  &\leq Cn^{-2\beta(2h+1)}j^{(2\beta-\alpha)(h+1)}(j+x)^{(2\beta-\alpha)h+\alpha-2}\\
	&\leq Cn^{-2\beta(2h+1)}j^{(2\beta-\alpha)(h+1)}(j+x)^{(2\beta-\alpha)h}x^{\alpha-2}\\
	&\leq Cn^{-2\beta(2h+1)}(j+x)^{(2\beta-\alpha)(2h+1)}x^{\alpha-2}
	=Cn^{-\alpha(2h+1)}x^{\alpha-2}.
\end{align*}

\noindent Case $j\leq x+2:$
Using \eqref{ineq:covpartial2}, we deduce that for all $j\leq x-2$, 
\begin{align}\label{eq:GtildeC3s1}
\widehat{G}(j,x)
  &\leq Cn^{-2\beta(2h+1)}j^{(2\beta-\alpha)h+2\beta+\nu-2}(j+x)^{(2\beta-\alpha)h-\nu}.
\end{align}
If $\nu\geq 2-\alpha$, then 
$$(j+x)^{-\nu}=(j+x)^{\alpha-2}(j+x)^{2-\alpha-\nu}\leq x^{\alpha-2}j^{2-\alpha-\nu},$$
and thus, by \eqref{eq:GtildeC3s1}, 
\begin{align}\label{eq:GtildeC3s2}
\hat{G}(j,x)
  &\leq Cn^{-2\beta(2h+1)}j^{(2\beta-\alpha)(h+1)}(j+x)^{(2\beta-\alpha)h}x^{\alpha-2}\leq Cn^{-\alpha(2h+1)}x^{\alpha-2}.
\end{align}
On the other hand, if $\nu\leq 2-\alpha$, then by \eqref{eq:GtildeC3s1}, 
\begin{align}\label{eq:GtildeC3s3}
\hat{G}(j,x)
  &\leq Cn^{-2\beta(2h+1)}j^{(2\beta-\alpha)h+2\beta-\alpha}(j+x)^{(2\beta-\alpha)h-\nu}\nonumber\\
	&\leq Cn^{-2\beta(2h+1)}j^{(2\beta-\alpha)(h+1)}(j+x)^{(2\beta-\alpha)h}x^{-\nu}\nonumber\\
	&\leq Cn^{-\alpha(2h+1)}x^{-\nu}.
\end{align}
The proof of the lemma is now complete.
\end{proof}
\begin{lemma}\label{lem:fdd}
Assume that $\alpha=\frac{1}{2\ell+1}$ and let $0\leq t_{1}\leq\cdots\leq t_{d}\leq T$ be fixed. Define $\Phi_{n}^{\ell}$ and $Z$ by \eqref{def:Phi} and \eqref{def:Z} respectively, for some function $f$ with compact support. Then,  
\begin{align}\label{conv:convintegral}
(\Phi_{n}^{\ell}(t_{1}),\dots, \Phi_{n}^{\ell}(t_{d}))
	&\stackrel{stably}{\rightarrow}(Z_{t_{1}},\dots, Z_{t_d}).
\end{align}
\end{lemma}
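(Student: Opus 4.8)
The plan is to use the small blocks–big blocks scheme, with Theorem~\ref{Variations} providing the stable limit on the big blocks. As $\kappa_{\nu,\ell}$ is a common deterministic factor (and the statement is trivial if it vanishes), after dividing it out it is enough to prove that
\begin{align*}
T_n(t)=\sum_{j=0}^{\Nt-1}f^{(2\ell+1)}(\widetilde{X}_{\frac jn})\,\DXj^{2\ell+1}
\end{align*}
satisfies \eqref{conv:convintegral} with limit $\sigma_\ell\int_0^t f^{(2\ell+1)}(X_s)\,dY_s$. Fix $m\ge1$, set $\tau_i=iT/m$ for $0\le i\le m$, and introduce the frozen-weight approximation
\begin{align*}
T_n^{(m)}(t)=\sum_{i=1}^{m}f^{(2\ell+1)}(X_{\tau_{i-1}})\big(V_n(\tau_i\wedge t)-V_n(\tau_{i-1}\wedge t)\big),
\end{align*}
where $V_n$ is the odd variation \eqref{eq:Vndef}. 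The argument rests on three ingredients: (A) $T_n^{(m)}$ approximates $T_n$ uniformly in $n$ once $m$ is large; (B) for each fixed $m$, $T_n^{(m)}$ converges stably as $n\to\infty$; and (C) the resulting limit converges, as $m\to\infty$, to the integral against $Y$.

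For ingredient (B), Theorem~\ref{Variations} gives that the vector of big-block increments $\big(V_n(\tau_i\wedge t_k)-V_n(\tau_{i-1}\wedge t_k)\big)_{i,k}$ converges stably to $\sigma_\ell$ times the corresponding increments of $Y$. By Definition~\ref{def:stable}, stable convergence is joint convergence in law with any bounded $\Fc$-measurable random element, so in particular it holds jointly with $(X_{\tau_{i-1}})_{1\le i\le m}$ and with $\Indi{B}$ for $B\in\sigma(X)$. Since $f^{(2\ell+1)}$ and the test function $\phi$ are bounded and continuous and the limiting increments of $Y$ are independent of $X$, the continuous mapping theorem yields, for each fixed $m$,
\begin{align*}
\E\big[\phi(T_n^{(m)}(t_1),\dots,T_n^{(m)}(t_d))\Indi{B}\big]\longrightarrow\E\big[\phi(\zeta^{(m)}_1,\dots,\zeta^{(m)}_d)\Indi{B}\big],
\end{align*}
where $\zeta^{(m)}_k=\sigma_\ell\sum_{i=1}^{m}f^{(2\ell+1)}(X_{\tau_{i-1}})(Y_{\tau_i\wedge t_k}-Y_{\tau_{i-1}\wedge t_k})$. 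For ingredient (C), since $Y$ has independent increments and is independent of $X$, the Riemann sums $\zeta^{(m)}_k$ converge in $L^2(\Omega)$ as $m\to\infty$ to $\sigma_\ell\int_0^{t_k}f^{(2\ell+1)}(X_s)\,dY_s$, and boundedness and continuity of $\phi$ transfer this to the expectations above.

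The main obstacle is ingredient (A), namely controlling the freezing error $T_n(t)-T_n^{(m)}(t)=\sum_i\sum_{j}\big(f^{(2\ell+1)}(\widetilde X_{\frac jn})-f^{(2\ell+1)}(X_{\tau_{i-1}})\big)\DXj^{2\ell+1}$ uniformly in $n$. A crude estimate is hopeless: since $\alpha=\frac{1}{2\ell+1}$ one has $\E[\Abs{\DXj}^{2\ell+1}]\asymp n^{-1/2}$, so $\sum_{j}\E[\Abs{\DXj}^{2\ell+1}]\asymp n^{1/2}\to\infty$, and the cancellations carried by the odd exponent must be retained. I would therefore bound $\Norm{T_n(t)-T_n^{(m)}(t)}_{L^2(\Omega)}$ by the same Skorohod-integral and Meyer-inequality computation that produced the tightness estimate \eqref{ineq:Psibound} in Lemma~\ref{lem:tight}, now carrying the weight differences. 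The leading contribution retains the factor $f^{(2\ell+1)}(\widetilde X_{\frac jn})-f^{(2\ell+1)}(X_{\tau_{i-1}})$, which, since $f^{(2\ell+1)}$ is bounded and uniformly continuous ($f$ having compact support), is at most $\eta$ on the event $A_{m,\rho}=\{\max_i\sup_{s,s'\in[\tau_{i-1},\tau_i]}\Abs{X_s-X_{s'}}<\rho\}$ and at most $2\Norm{f^{(2\ell+1)}}_{\infty}$ otherwise; by path continuity of $X$, $\Pb[A_{m,\rho}^c]\to0$ as $m\to\infty$. The remaining, lower-order contributions come with extra inner-product factors $\Ip{\widetilde\varepsilon_{\frac jn},\partial_{\frac jn}}_{\hten}$ and are negligible by the very estimates that make the terms $\Phi_n^h$, $h>\ell$, vanish in Lemma~\ref{lem:tight}. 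Combined with the boundedness of the summed block second moments (which total $O(T^{2\beta/\alpha})$, as in the computation behind Lemma~\ref{sigma_comps}), this gives $\limsup_n\Norm{T_n(t)-T_n^{(m)}(t)}_{L^1(\Omega)}\to0$ as $m\to\infty$. The delicate point throughout is that the weights are random, so the smallness must be extracted via the localization on $A_{m,\rho}$ rather than pulled out of the $L^2$ bound.

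Finally, I would assemble the pieces with a $3\varepsilon$ argument. Writing the target as $\sigma_\ell\int_0^{t_k}f^{(2\ell+1)}(X_s)\,dY_s$ and using that $\phi$ is Lipschitz, ingredient (A) bounds $\Abs{\E[\phi(T_n)\Indi{B}]-\E[\phi(T_n^{(m)})\Indi{B}]}$ uniformly in $n$; choosing $m$ large makes this gap and the $m\to\infty$ gap from (C) small, and ingredient (B) then disposes of the limit in $n$ for that fixed $m$. This proves \eqref{conv:convintegral}.
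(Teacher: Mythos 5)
Your plan is the same small blocks--big blocks route as the paper's own proof: your ingredients (B) and (C) are exactly the paper's argument (stable convergence of the frozen-weight block sums via Theorem \ref{Variations} jointly with $\mathcal{F}$-measurable data, then $L^2$/probability convergence of the Riemann sums $\Xi_p^i$ to $Z_{t_i}$), and your ingredient (A) is, in outline, the paper's reduction of $\Norm{\Phi_n^\ell(t)-\widetilde\Phi_{n,p}(t)}_{L^2(\Omega)}$ to Meyer-inequality bounds on multiple Skorohod integrals carrying the weight differences, with the smallness extracted from path continuity of $X$ (the paper's quantity $Q_p$ plays the role of your localization on $A_{m,\rho}$) and the bilinear sum $\sum_{j,k}\xi_{j,n}^{2\ell}\xi_{k,n}^{2\ell}\Abs{\Ip{\partial_{\frac jn},\partial_{\frac kn}}_\hten}$ kept $O(1)$ via \eqref{eq:Gtildebound}.

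However, there is one genuine gap in your ingredient (A). When you apply Lemma \ref{lem:Fdelta} to the freezing error, the Malliavin derivatives of the frozen weight $f^{(2\ell+1)}(X_{\tau_{i-1}})$ produce contraction factors $\Ip{\varepsilon_{\tau_{i-1}},\partial_{\frac jn}}_\hten^{r}$, and in particular a zero-order (no remaining Skorohod integral) term of the form $\sum_{k}\sum_{j\in I_k}\xi_{j,n}^{2u}f^{(4\ell+2-2u)}(X_{\frac kp})\Ip{\varepsilon_{\frac kp},\partial_{\frac jn}}_\hten^{2\ell+1-2u}$. Your proposal dismisses the lower-order terms as carrying ``extra inner-product factors $\Ip{\widetilde\varepsilon_{\frac jn},\partial_{\frac jn}}_\hten$'' killed by the Lemma \ref{lem:tight} estimates, but that mechanism (Lemma \ref{lem:aux}, estimate \eqref{lim:SupR}) only applies to the $\widetilde\varepsilon_{\frac jn}$ factors: one has $\Ip{\widetilde\varepsilon_{\frac jn},\partial_{\frac jn}}_\hten=O(n^{-2\beta})$ by a telescoping identity, whereas Cauchy--Schwarz only gives $\Abs{\Ip{\varepsilon_{\frac kp},\partial_{\frac jn}}_\hten}\le C n^{-\alpha/2}$, so the naive bound on the frozen zero-order term is of order $n\cdot n^{-\alpha u}\cdot n^{-\alpha(2\ell+1-2u)/2}=n^{1/2}$ --- divergent, by precisely the counting you invoke to rule out the crude estimate, and multiplying by a small uniform-continuity factor $\eta$ does not cure a bound that blows up in $n$. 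The paper closes this with a dedicated argument (its Step 1, convergence \eqref{lim:Thetareminder1}): after reserving one factor $\Abs{\Ip{\varepsilon_{\frac kp},\partial_{\frac jn}}_\hten}$ and bounding the rest by $Cn^{-\alpha\ell}$, it writes $\Ip{\varepsilon_{\frac kp},\partial_{\frac jn}}_\hten=\left(\frac kp\right)^{2\beta}\left(\phi\left(\frac{(j+1)p}{nk}\right)-\phi\left(\frac{jp}{nk}\right)\right)$ and uses the decomposition \eqref{phi1}, $\phi(x)=-\lambda(x-1)^\alpha+\psi(x)$ with $\psi'$ bounded, so that the sum over $j\in I_k$ telescopes to a bound uniform in $n$; the whole term is then $O(n^{-\alpha\ell})\to 0$ for each fixed $p$. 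Without this telescoping cancellation --- which is an idea absent from your sketch --- ingredient (A) does not close.
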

\begin{proof}
We follow the small blocks-big blocks methodology (see \cite{BNN} and \cite{CoNuWo}). Let $2\leq p<n$. For $k\geq0$, define the set 
\begin{align*}
I_{k}
  &=\{j\in\{0,\dots, \floor{nt}-1\}\ |\ \frac{k}{p}\leq\frac{j}{n}< \frac{k+1}{p}\}.
\end{align*}
The basic idea of the proof of \eqref{conv:convintegral}, consists on approximating $(\Phi_{n}^{\ell}(t_{1}),\dots, \Phi_{n}^{\ell}(t_{d}))$ by the random vector $(\widetilde{\Phi}_{n,p}(t_{1}),\dots, \widetilde{\Phi}_{n,p}(t_{d}))$, where 
\begin{align*}
\widetilde{\Phi}_{n,p}(t)
  &=\kappa_{\nu,\ell}\sum_{k=0}^{\floor{pt}}\sum_{j\in I_{k}}f^{(2\ell+1)}(X_{\frac{k}{p}})(\DXj)^{2\ell+1}.
\end{align*}
By Proposition \ref{Variations}, for every $\Fc$-measurable and bounded random variable $\eta$,  the vector $(\widetilde{\Phi}_{n,p}(t_{1}),\dots,\widetilde{\Phi}_{n,p}(t_{d}),\eta)$ converges in law, as $n$ tends to infinity, to the vector $(\Xi_{p}^{1},\dots, \Xi_{p}^{d},\eta)$, where
\begin{align*}
\Xi_{p}^{i}
  &=\kappa_{\nu,\ell}\sigma_{\ell}\sum_{k=0}^{\floor{pt_{i}}}f^{(2\ell+1)}(X_{\frac{k}{p}})(Y_{\frac{k+1}{p}}-Y_{\frac{k}{p}}),\ \ \ \ \ \ \ \ \ \text{for}\ i=1,\dots, d.
\end{align*} 
In turn, when $p\rightarrow\infty$, the random vector $(\Xi_{p}^{1},\dots,\Xi_{p}^{d},\eta)$ converges in probability to a random vector with the same law as $(Z_{t_{1}},\dots, Z_{t_{d}},\eta)$, which  implies \eqref{conv:convintegral}, provided that 
\begin{align}\label{lim:PhiPhitilde}
\lim_{p\rightarrow\infty}\limsup_{n\rightarrow\infty}\sum_{i=0}^{d}\Norm{\Phi_{n}^{\ell}(t_{i})-\widetilde{\Phi}_{n,p}(t_{i})}_{L^{2}(\Omega)}
  &=0.
\end{align}
Indeed, if \eqref{lim:PhiPhitilde} holds, then for all $g:\R^{d+1}\rightarrow\R$ differentiable with compact support, and every $p\geq 1$,
\begin{multline*}
\limsup_{n\rightarrow\infty}\Abs{\E\left[g(\Phi_{n}^{\ell}(t_{1}),\dots, \Phi_{n}^{\ell}(t_{d}),\eta)-g(Z_{t_{1}},\dots, Z_{t_{d}},\eta)\right]}\\
\begin{aligned}
&\leq\limsup_{n\rightarrow\infty}\Abs{\E\left[g(\Phi_{n}^{\ell}(t_{1}),\dots, \Phi_{n}^{\ell}(t_{d}),\eta)-g(\widetilde{\Phi}_{n,p}(t_{1}),\dots, \widetilde{\Phi}_{n,p}(t_{d}),\eta)\right]}\\
&+\limsup_{n\rightarrow\infty}\Abs{\E\left[g(\widetilde{\Phi}_{n,p}(t_{1}),\dots, \widetilde{\Phi}_{n,p}(t_{d}),\eta)-g(Z_{t_{1}},\dots, Z_{t_{d}},\eta)\right]}\\
&=\limsup_{n\rightarrow\infty}\Abs{\E\left[g(\Phi_{n}^{\ell}(t_{1}),\dots, \Phi_{n}^{\ell}(t_{d}),\eta)-g(\widetilde{\Phi}_{n,p}(t_{1}),\dots, \widetilde{\Phi}_{n,p}(t_{d}),\eta)\right]}\\
&+\Abs{\E\left[g(\Xi_{p}^{1},\dots, \Xi_{p}^{d},\eta)-g(Z_{t_{1}},\dots, Z_{t_{d}},\eta)\right]}.
\end{aligned}
\end{multline*}
Then, taking $p\rightarrow\infty$, we get
$$\lim_{n\rightarrow \infty} \left|\E\left[g(\Phi_{n}^{\ell}(t_{1}),\dots, \Phi_{n}^{\ell}(t_{d}),\eta)\right] - \E\left[g(z_{t_{1}},\dots, Z_{t_{d}},\eta)\right] \right| =0,$$
as required.

In order to prove \eqref{lim:PhiPhitilde} we proceed as follows. Following the proof of \eqref{eq:Phitheta}, we can show that 
\begin{align}
\Phi_{n}^{\ell}(t_{i})
  &=\kappa_{\nu,\ell}\sum_{u=0}^{\ell}\sum_{r=0}^{2\ell+1-2u}\Comb{2\ell+1-2u\\r}c_{u,\ell}\Theta_{u,r}^{n}(t_{i}),\label{eq:Phiexpansion}\\
\widetilde{\Phi}_{n,p}(t_{i})
  &=\kappa_{\nu,\ell}\sum_{u=0}^{\ell}\sum_{r=0}^{2\ell+1-2u}\Comb{2\ell+1-2u\\r}c_{u,\ell}\widetilde{\Theta}_{u,r}^{n,p}(t_{i}),\label{eq:Phitildeexpansion}
\end{align}
where $\Theta_{u,r}^{n}(t)$ and $\widetilde{\Theta}_{u,r}^{n,p}(t)$ are defined, for $0\leq u\leq \ell$ and $0\leq r\leq 2\ell+1-2u$, by
\begin{align*}
\Theta_{u,r}^{n}(t)
  &=\delta^{2\ell+1-2u-r}\bigg(\sum_{k=0}^{\floor{pt_{i}}}\sum_{j\in I_{k}}\xi_{j,n}^{2u} f^{(2\ell+1+r)}(\widetilde{X}_{\frac{j}{n}})\partial_{\frac{j}{n}}^{\otimes 2\ell+1-2u-r}\Ip{\widetilde{\varepsilon}_{\frac{j}{n}},\partial_{\frac{j}{n}}}^{r}\bigg),\\
\widetilde{\Theta}_{u,r}^{n,p}(t)
  &=\delta^{2\ell+1-2u-r}\bigg(\sum_{k=0}^{\floor{pt_{i}}}\sum_{j\in I_{k}}\xi_{j,n}^{2u} f^{(2\ell+1+r)}(X_{\frac{k}{p}})\partial_{\frac{j}{n}}^{\otimes 2\ell+1-2u-r}\Ip{\varepsilon_{\frac{k}{p}},\partial_{\frac{j}{n}}}^{r}\bigg).
\end{align*}
In view of \eqref{eq:Phiexpansion} and \eqref{eq:Phitildeexpansion}, relation \eqref{lim:PhiPhitilde} holds true, provided that we show that for every $t\geq0$
\begin{align}\label{conv:PhiL2}
\lim_{p\rightarrow\infty}\limsup_{n\rightarrow\infty}\Norm{\Theta_{u,r}^{n}(t)-\widetilde{\Theta}_{u,r}^{n,p}(t)}_{L^{2}(\Omega)}
  &=0.
\end{align}
We divide the proof of \eqref{conv:PhiL2} in several steps.\\

\noindent
\textit{Step 1.} 
First we prove \eqref{conv:PhiL2} in the case $r=2\ell+1-2u$. To this end, it suffices to show that for every $p$ fixed,
\begin{align}
\lim_{n\rightarrow\infty}\Norm{\sum_{k=0}^{\floor{pt}}\sum_{j\in I_{k}}\xi_{j,n}^{2u} f^{(4\ell+2-2u)}(X_{\frac{k}{p}})\Ip{\varepsilon_{\frac{k}{p}},\partial_{\frac{j}{n}}}_{\Hg}^{2\ell+1-2u}}_{L^{2}(\Omega)}
  &=0,\label{lim:Thetareminder1}
\end{align}
and 
\begin{align}
\lim_{n\rightarrow\infty}\Norm{\sum_{k=0}^{\floor{pt}}\sum_{j\in I_{k}}\xi_{j,n}^{2u} f^{(4\ell+2-2u)}(\widetilde{X}_{\frac{j}{n}})\Ip{\widetilde{\varepsilon}_{\frac{j}{n}},\partial_{\frac{j}{n}}}_{\Hg}^{2\ell+1-2u}}_{L^{2}(\Omega)}
  &=0.\label{lim:Thetareminder2}
\end{align}
Relation \eqref{lim:Thetareminder2} was already proved in Lemma \ref{lem:tight} (see inequality \eqref{lim:SupR}). In order to prove \eqref{lim:Thetareminder1} we proceed as follows. Since $f$ has compact support, there exists a constant $C>0$, such that for every $u=0,\dots, \ell$, we have
\begin{multline*}
\Norm{\sum_{k=0}^{\floor{pt}}\sum_{j\in I_{k}}\xi_{j,n}^{2u} f^{(4\ell+2-2u)}(X_{\frac{k}{p}})\Ip{\varepsilon_{\frac{k}{p}},\partial_{\frac{j}{n}}}_{\hten}^{2\ell+1-2u}}_{L^{2}(\Omega)}\\
\begin{aligned}
  &\leq C\sum_{k=0}^{\floor{pt}}\sum_{j\in I_{k}}\xi_{j,n}^{2u} \Abs{\Ip{\varepsilon_{\frac{k}{p}},\partial_{\frac{j}{n}}}_{\hten}}^{2\ell+1-2u}\\
	&\leq C\left(\frac{k}{p}\right)^{2\beta(\ell-u)}\phi(1)^{\ell-u}\sum_{k=0}^{\floor{pt}}\sum_{j\in I_{k}}\xi_{j,n}^{2\ell} \Abs{\Ip{\varepsilon_{\frac{k}{p}},\partial_{\frac{j}{n}}}_{\hten}},
\end{aligned}
\end{multline*}
where the last inequality follows from Cauchy-Schwarz inequality  and \eqref{eq:R}. Therefore, by relation \eqref{ineq:varpartial} 
 there exist a constant $C_{k,p}>0$, such that
\begin{multline*}
\Norm{\sum_{k=0}^{\floor{pt}}\sum_{j\in I_{k}}\xi_{j,n}^{2u} f^{(4\ell+2-2u)}(X_{\frac{k}{p}})\Ip{\varepsilon_{\frac{k}{p}},\partial_{\frac{j}{n}}}_{\hten}^{2\ell+1-2u}}_{L^{2}(\Omega)} \\
\begin{aligned} &\le C_{k,p}  n^{-\alpha \ell}\sum_{k=1}^{\floor{pt}} \sum_{j\in I_{k}}\left( \frac kp \right) ^{2\beta} \left| \phi\left(\frac{(j+1) p}{nk}\right) -\phi\left(\frac {jp}{nk}\right) \right|.
\end{aligned}
\end{multline*}
Using the decomposition (\ref{phi1}) we get
\begin{eqnarray*}
 \left| \phi\left(\frac{(j+1) p}{nk}\right) -\phi\left(\frac {jp}{nk}\right) \right| & \le&  \lambda\left[ \left(\frac {(j+1)p}{nk} -1\right)^\alpha -\left(\frac {jp}{nk} -1\right)^\alpha \right]\\
 &&
 + \left| \psi \left( \frac {(j+1)p}{nk} \right) -\psi\left( \frac {jp}{nk} \right) \right|\\
 &\le &  \lambda\left[ \left(\frac {(j+1)p}{nk} -1\right)^\alpha -\left(\frac {jp}{nk} -1\right)^\alpha \right ]+ \sup_{x\ge 1} |\psi'(x)| \frac p{nk}.
 \end{eqnarray*}
The sum in $j\in I_k$ of this expression is bounded by a constant not depending on $n$ because the first term produces a telescopic sum and the second term is bounded by a constant times $1/n$.  This completes the proof of the convergence  \eqref{lim:Thetareminder1}.

\noindent
\textit{Step 2.} 
Next we show \eqref{conv:PhiL2} for $0\leq r\leq 2\ell-2u$. To this end, define the variables
$$F_{k,j,r}^{n,p}=f^{(2\ell+1+r)}(\widetilde{X}_{\frac{j}{n}})\Ip{\widetilde{\varepsilon}_{\frac{j}{n}},\partial_{\frac{j}{n}}}^{r}-f^{(2\ell+1+r)}(X_{\frac{k}{p}})\Ip{\varepsilon_{\frac{k}{p}},\partial_{\frac{j}{n}}}^{r}.$$ 
We aim to show that for every $u=0,\dots, \ell$, and $0\leq r\leq 2\ell-2u$,
\begin{align}\label{conv:Meyerl}
\lim_{p\rightarrow\infty}\limsup_{n\rightarrow\infty}\Norm{\delta^{2\ell+1-2u-r}\left(\sum_{k=0}^{\floor{pt}}\sum_{j\in I_{k}}\xi_{j,n}^{2u}F_{k,j,r}^{n,p} \partial_{\frac{j}{n}}^{\otimes 2\ell+1-2u-r}\right)}_{L^{2}(\Omega)}=0.
\end{align}
Define $w=2\ell+1-2u-r$. By Meyer's inequality \eqref{eq:Meyer}, we have 
\begin{multline}\label{ineq:Meyerl}
\Norm{\delta^{w}\left(\sum_{k=0}^{\floor{pt}}\sum_{j\in I_{k}}\xi_{j,n}^{2u}F_{k,j,r}^{n,p} \partial_{\frac{j}{n}}^{\otimes w}\right)}_{L^{2}(\Omega)}^{2}\\
\begin{aligned}
  &\leq C\sum_{i=0}^{w}\Norm{\sum_{k=0}^{\floor{pt}}\sum_{j\in I_{k}}\xi_{j,n}^{2u}D^{i}F_{k,j,r}^{n,p}\otimes\partial_{\frac{j}{n}}^{\otimes w}}_{L^{2}(\Omega;\hten^{\otimes (w+i)})}^2\\
	&= C\sum_{i=0}^{w}\sum_{k_{1},k_{2}=0}^{\floor{pt}}\sum_{\substack{j_{1}\in I_{k_{1}}\\j_{2}\in I_{k_{2}}}}\xi_{j_{1},n}^{2u}\xi_{j_{2},n}^{2u}
	\mathbb{E}\left[\Ip{D^{i}F_{k_{1},j_{1},r}^{n,p},D^{i}F_{k_{2},j_{2},r}^{n,p}}_{\hten^{\otimes i}}\right]
	\Ip{\partial_{\frac{j_{1}}{n}},
	\partial_{\frac{j_{2}}{n}}}_{\hten}^{w}.
\end{aligned}
\end{multline}
By the Cauchy-Schwarz inequality, we have $\Abs{\Ip{\partial_{\frac{j_{1}}{n}},\partial_{\frac{j_{2}}{n}}}_{\hten}}\leq\xi_{j_{1},n}\xi_{j_{2},n}$, and hence, 
\begin{align*}
\Abs{\Ip{\partial_{\frac{j_{1}}{n}},\partial_{\frac{j_{2}}{n}}}_{\hten}}^{w}
  &\leq (\xi_{j_{1},n}\xi_{j_{2},n})^{2\ell-2u-r}\Abs{\Ip{\partial_{\frac{j_{1}}{n}},\partial_{\frac{j_{2}}{n}}}_{\hten}},
\end{align*}
which, by \eqref{ineq:Meyerl}, implies that 
\begin{multline}\label{ineq:Meyerl2}
\Norm{\delta^{w}\left(\sum_{k=0}^{\floor{pt}}\sum_{j\in I_{k}}\xi_{j,n}^{2u}F_{k,j,r}^{n,p} \partial_{\frac{j}{n}}^{\otimes q}\right)}_{L^{2}(\Omega)}^{2}\\
\begin{aligned}
  &\leq \sum_{i=0}^{w}\sum_{k_{1},k_{2}=0}^{\floor{pt}}\sum_{\substack{j_{1}\in I_{k_{1}}\\j_{2}\in I_{k_{2}}}}\xi_{j_{1},n}^{2\ell-r}\xi_{j_{2},n}^{2\ell-r}
	\Norm{D^{i}F_{k_{1},j_{1},r}^{n,p}}_{L^{2}(\Omega,\hten^{\otimes i})}\Norm{D^{i}F_{k_{2},j_{2},r}^{n,p}}_{L^{2}(\Omega,\hten^{\otimes i})}
	\Abs{\Ip{\partial_{\frac{j_{1}}{n}},\partial_{\frac{j_{2}}{n}}}_{\hten}}\\
	&\leq \sum_{i=0}^{w}\max_{(k,j)\in J_{n,p}}\Norm{D^{i}F_{k,j,r}^{n,p}}_{L^{2}(\Omega,\hten^{\otimes i})}^2\sum_{k_{1},k_{2}=0}^{\floor{pt}}\sum_{\substack{j_{1}\in I_{k_{1}}\\j_{2}\in I_{k_{2}}}}\xi_{j_{1},n}^{2\ell-r}\xi_{j_{2},n}^{2\ell-r}
	\Abs{\Ip{\partial_{\frac{j_{1}}{n}},\partial_{\frac{j_{2}}{n}}}_{\hten}},
\end{aligned}
\end{multline}
where $J_{n,p}$ denotes the set of indices 
$$J_{n,p}=\{(k,j)\in\mathbb{N}\ |\ 0\leq k\leq  \floor{pt} +1~~\text{and}~~\frac{k}{p}\leq\frac{j}{n}\leq \frac{k+1}{p}\}.$$
We can easily check that
\begin{align*}
F_{k,j,r}^{n,p}
  &= f^{(2\ell+1+r)}(\widetilde{X}_{\frac{j}{n}})\Ip{\widetilde{\varepsilon}_{\frac{j}{n}}^{\otimes r}-\varepsilon_{\frac{k}{p}}^{\otimes r},\partial_{\frac{j}{n}}^{\otimes r}}_{\hten^{\otimes r}}\\
	&+\left(f^{(2\ell+1+r)}(\widetilde{X}_{\frac{j}{n}})-f^{(2\ell+1+r)}(X_{\frac{k}{p}})\right)\Ip{\varepsilon_{\frac{k}{p}},\partial_{\frac{j}{n}}}_{\Hg}^{r},
\end{align*}
and hence, we have
\begin{align*}
D^{i}F_{k,j,r}^{n,p}
	&= f^{(2\ell+1+r+i)}(\widetilde{X}_{\frac{j}{n}})\widetilde{\varepsilon}_{\frac{j}{n}}^{\otimes i}\Ip{\widetilde{\varepsilon}_{\frac{j}{n}}^{\otimes r}-\varepsilon_{\frac{k}{p}}^{\otimes r},\partial_{\frac{j}{n}}^{\otimes r}}_{\hten^{\otimes r}}\\
	&+f^{(2\ell+1+r+i)}(\widetilde{X}_{\frac{j}{n}})\left(\widetilde{\varepsilon}_{\frac{j}{n}}^{\otimes i}-\varepsilon_{\frac{k}{p}}^{\otimes i}\right)\Ip{\varepsilon_{\frac{k}{p}},\partial_{\frac{j}{n}}}_{\Hg}^{r}\\
	&+\left(f^{(2\ell+1+r+i)}(\widetilde{X}_{\frac{j}{n}})-f^{(2\ell+1+r+i)}(X_{\frac{k}{p}})\right)\varepsilon_{\frac{k}{p}}^{\otimes i}\Ip{\varepsilon_{\frac{k}{p}},\partial_{\frac{j}{n}}}_{\Hg}^{r}.
\end{align*}
From the previous equality, and the compact support condition of $f$, we deduce that there exists a constant $C>0$, such that
\begin{multline*}
\Norm{D^{i}F_{k,j,r}^{n,p}}_{L^{2}(\Omega, \hten^{\otimes i})}\\
\begin{aligned}
	&\leq C\Norm{\widetilde{\varepsilon}_{\frac{j}{n}}^{\otimes i}}_{\hten^{\otimes i}}\Abs{\Ip{\widetilde{\varepsilon}_{\frac{j}{n}}^{\otimes r}-\varepsilon_{\frac{k}{p}}^{\otimes r},\partial_{\frac{j}{n}}^{\otimes r}}_{\hten^{\otimes r}}}+C\Norm{\widetilde{\varepsilon}_{\frac{j}{n}}^{\otimes i}-\varepsilon_{\frac{k}{p}}^{\otimes i}}_{\hten^{\otimes i}}\Abs{\Ip{\varepsilon_{\frac{k}{p}},\partial_{\frac{j}{n}}}_{\Hg}^{r}}\nonumber\\
	&+\Norm{f^{(2\ell+1+r+i)}(\widetilde{X}_{\frac{j}{n}})-f^{(2\ell+1+r+i)}(X_{\frac{k}{p}})}_{L^{2}(\Omega)}\Norm{\varepsilon_{\frac{k}{p}}^{\otimes i}}_{\hten^{\otimes i}}\Abs{\Ip{\varepsilon_{\frac{k}{p}},\partial_{\frac{j}{n}}}_{\Hg}^{r}},
\end{aligned}
\end{multline*}
and hence, 
\begin{align}\label{ineq:DFkjr}
\Norm{D^{i}F_{k,j,r}^{n,p}}_{L^{2}(\Omega, \hten^{\otimes i})}
	&\leq C\Norm{\widetilde{\varepsilon}_{\frac{j}{n}}}_{\hten}^{i}\Norm{\widetilde{\varepsilon}_{\frac{j}{n}}^{\otimes r}-\varepsilon_{\frac{k}{p}}^{\otimes r}}_{\hten^{\otimes r}}\Norm{\partial_{\frac{j}{n}}}_{\hten}^{r}\\
	&+C\Norm{\widetilde{\varepsilon}_{\frac{j}{n}}^{\otimes i}-\varepsilon_{\frac{k}{p}}^{\otimes i}}_{\hten^{\otimes i}}\Norm{\varepsilon_{\frac{k}{p}}}_{\hten}^{r}\Norm{\partial_{\frac{j}{n}}}_{\hten}^{r}\nonumber\\
	&+\Norm{f^{(2\ell+1+r+i)}(\widetilde{X}_{\frac{j}{n}})-f^{(2\ell+1+r+i)}(X_{\frac{k}{p}})}_{L^{2}(\Omega)}\Norm{\varepsilon_{\frac{k}{p}}}_{\hten}^{r+i}\Norm{\partial_{\frac{j}{n}}}_{\hten}^{r}.\nonumber
\end{align}
Using the Cauchy-Schwarz inequality, as well as \eqref{eq:R}, we have that for every $\gamma\in\N$, $\gamma\geq1$, there exists a constant $C>0$ such that
\begin{align*}
\Norm{\widetilde{\varepsilon}_{\frac{j}{n}}^{\otimes \gamma}-\varepsilon_{\frac{k}{p}}^{\otimes \gamma}}_{\hten^{\otimes \gamma}}
  \leq \Norm{\widetilde{\varepsilon}_{\frac{j}{n}}-\varepsilon_{\frac{k}{p}}}_{\hten}
	\sum_{i=0}^{\gamma-1}\Norm{\widetilde{\varepsilon}_{\frac{j}{n}}}_{\hten}^{i}\Norm{\varepsilon_{\frac{k}{p}}}_{\hten}^{\gamma-1-i}
	\leq C\Norm{\widetilde{\varepsilon}_{\frac{j}{n}}-\varepsilon_{\frac{k}{p}}}_{\hten}.
\end{align*}
As a consequence, by \eqref{ineq:DFkjr}, there exists a constant $C>0$ such that
\begin{multline*}
\Norm{D^{i}F_{k,j,r}^{n,p}}_{L^{2}(\Omega, \hten^{\otimes i})}\\
\begin{aligned}
	&\leq C\xi_{j,n}^{r}\bigg(\Norm{\widetilde{\varepsilon}_{\frac{j}{n}}-\varepsilon_{\frac{k}{p}}}_{\hten}+\Norm{f^{(2\ell+1+r+i)}(\widetilde{X}_{\frac{j}{n}})-f^{(2\ell+1+r+i)}(X_{\frac{k}{p}})}_{L^{2}(\Omega)}\bigg)\\
	&\leq C\xi_{j,n}^{r}\bigg(\mathbb{E}\left[\sup_{\Abs{t-s}\leq \frac{1}{p}}\Abs{\widetilde{X}_{t}-X_{s}}^2\right]^{\frac{1}{2}}
	+\mathbb{E}\left[\sup_{\Abs{t-s}\leq\frac{1}{p}}\Abs{f^{(2\ell+1+r+i)}(\widetilde{X}_{t})-f^{(2\ell+1+r+i)}(X_{s})}^2\right]^{\frac{1}{2}}\bigg).
\end{aligned}
\end{multline*}
From the previous inequality, we deduce that the function
\begin{align*}
Q_{p}=\sup_{n\geq 1}\xi_{j,n}^{-2r}\sum_{i=0}^{q}\max_{(k,j)\in J_{n,p}}\Norm{D^{i}F_{k,j,r}^{n,p}}_{L^{2}(\Omega, \hten^{\otimes i})}^2,
\end{align*}
satisfies $\lim_{p\rightarrow\infty}Q_{p}=0$. Hence, by \eqref{ineq:varpartial} and \eqref{ineq:Meyerl2},
\begin{align}\label{ineq:Meyerl3}
\Norm{\delta^{q}\left(\sum_{k=0}^{\floor{pt}}\sum_{j\in I_{k}}\xi_{j,n}^{2u}F_{k,j,r}^{n,p} \partial_{\frac{j}{n}}^{\otimes q}\right)}_{L^{2}(\Omega)}^{2}
  &\leq C Q_{p}\sum_{k_{1},k_{2}=0}^{\floor{pt}}\sum_{\substack{j_{1}\in I_{k_{1}}\\j_{2}\in I_{k_{2}}}}\xi_{j_{1},n}^{2\ell}\xi_{j_{2},n}^{2\ell}\nonumber
	\Abs{\Ip{\partial_{\frac{j_{1}}{n}},\partial_{\frac{j_{2}}{n}}}_{\hten}}\\
	&= C Q_{p}\sum_{i_{1},i_{2}=0}^{\floor{nt}}\xi_{i_{1},n}^{2\ell}\xi_{i_{2},n}^{2\ell}
	\Abs{\Ip{\partial_{\frac{i_{1}}{n}},\partial_{\frac{i_{2}}{n}}}_{\hten}}\nonumber\\
	&\leq C Q_{p}\sum_{x=0}^{\floor{nt}-1}\sum_{j=0}^{\floor{nt}-1-x}\xi_{j,n}^{2\ell}\xi_{j+x,n}^{2\ell}
	\Abs{\Ip{\partial_{\frac{j}{n}},\partial_{\frac{j+x}{n}}}_{\hten}}.
\end{align}

\noindent Using the previous inequality, as well as \eqref{eq:Gtildebound}, we deduce that
\begin{align}\label{eq:SkorFkjr}
\Norm{\delta^{q}\left(\sum_{k=0}^{\floor{pt}}\sum_{j\in I_{k}}\xi_{j,n}^{2u}F_{k,j,r}^{n,p} \partial_{\frac{j}{n}}^{\otimes q}\right)}_{L^{2}(\Omega)}^{2}
  &\leq C t Q_{p}\sum_{x=0}^{\infty}n^{1-\alpha(2\ell+1)}(1+x)^{-1-\delta},
\end{align}
for some $\delta>0$. 
Since,  $\alpha=\frac{1}{2\ell+1}$, relation \eqref{eq:SkorFkjr} implies that

\begin{align}\label{ineq:Meyerlfinal}
\Norm{\delta^{q}\left(\sum_{k=0}^{\floor{pt}}\sum_{j\in I_{k}}\xi_{j,n}^{2u}F_{k,j,r}^{n,p} \partial_{\frac{j}{n}}^{\otimes q}\right)}_{L^{2}(\Omega)}^{2}
  &\leq CtQ_{p}.
\end{align}
Relation \eqref{conv:Meyerl} then follows from \eqref{ineq:Meyerlfinal} since $\lim_{p\rightarrow\infty}Q_{p}=0$. The proof is now complete.
\end{proof}

\section{Appendix} \label{Sec:App}
The following lemmas are estimations on the covariances of increments of $X$. The proof of these results relies on some technical lemmas proved by Nualart and Harnett in \cite{Hermite}.  In what follows $C$ is a generic constant depending only on the covariance of the process $X$.

\begin{lemma}  \label{lem3.2}Under (H.1),  for $0<s\le t$ we have
\[
{\mathbb E}\left[ (X_{t+s} - X_t)^2\right] = 2\lambda t^{2\beta -\alpha}s^\alpha + g_1(t,s),
\]
where $|g_1(t,s)| \le C st^{2\beta-1}$.
\end{lemma}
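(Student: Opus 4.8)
The plan is to compute the increment variance directly from the covariance representation \eqref{eq:R} and then peel off the singular part of $\phi$ that is prescribed by Hypothesis (H.1). First I would expand, using $R(a,b)=a^{2\beta}\phi(b/a)$ for $0<a\le b$,
\[
\E\left[(X_{t+s}-X_t)^2\right] = (t+s)^{2\beta}\phi(1) + t^{2\beta}\phi(1) - 2t^{2\beta}\phi\left(1+\tfrac st\right),
\]
where the two variance terms use $\phi(1)=\E[X_1^2]$ and the cross term is $R(t,t+s)=t^{2\beta}\phi(1+s/t)$. Note that $0<s\le t$ forces the argument $1+s/t$ to lie in $(1,2]$, which will matter below.

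Next I would insert the decomposition $\phi(x)=-\lambda(x-1)^\alpha+\psi(x)$ from \eqref{phi1}, together with $\phi(1)=\psi(1)$. The cross term $-2t^{2\beta}\phi(1+s/t)$ then contributes exactly $2\lambda t^{2\beta}(s/t)^\alpha=2\lambda t^{2\beta-\alpha}s^\alpha$, which is the leading term in the statement. What is left is the remainder
\[
g_1(t,s) = \psi(1)\left[(t+s)^{2\beta}-t^{2\beta}\right] + 2t^{2\beta}\left[\psi(1)-\psi\left(1+\tfrac st\right)\right].
\]

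It then remains to bound each bracket by $Cst^{2\beta-1}$. For the first, I would apply the mean value theorem to $x\mapsto x^{2\beta}$, writing $(t+s)^{2\beta}-t^{2\beta}=2\beta\,\theta^{2\beta-1}s$ for some $\theta\in(t,t+s)$; since $\beta\le 1/2$ we have $2\beta-1\le 0$ and $\theta>t$, so $\theta^{2\beta-1}\le t^{2\beta-1}$ and hence $\left|(t+s)^{2\beta}-t^{2\beta}\right|\le st^{2\beta-1}$. For the second bracket I would again use the mean value theorem, this time applied to $\psi$ on $[1,1+s/t]$: because $s/t\le 1$ the point $1+s/t$ lies in $(1,2]$, and (H.1) guarantees that $\psi'$ is bounded there, so $\left|\psi(1)-\psi(1+s/t)\right|\le C(s/t)$ and therefore $\left|2t^{2\beta}[\psi(1)-\psi(1+s/t)]\right|\le Ct^{2\beta-1}s$. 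Summing the two estimates gives $|g_1(t,s)|\le Cst^{2\beta-1}$, as claimed.

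The argument is essentially elementary and I do not expect a serious obstacle. The only point requiring genuine care is verifying that the cross-term argument $1+s/t$ remains in the region $(1,2]$ on which (H.1) controls $\psi'$, together with the continuity of $\psi$ up to $x=1$ needed to apply the mean value theorem on the closed interval; this is precisely where the hypothesis $0<s\le t$ (rather than merely $s$ small) is used.
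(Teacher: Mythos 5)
Your proof is correct. The paper itself gives no computation here --- its ``proof'' is a citation to \cite[Lemma~3.1]{Hermite}, with the remark that only the boundedness of $|\psi'|$ on $(1,2]$ is used --- and your argument is precisely the self-contained reconstruction of that computation: the expansion via $R(s,t)=s^{2\beta}\phi(t/s)$, the extraction of $2\lambda t^{2\beta-\alpha}s^\alpha$ from the cross term using $\phi(x)=-\lambda(x-1)^\alpha+\psi(x)$, and the two mean-value estimates (using $2\beta-1\le 0$ for the power term and (H.1) for $\psi$) are exactly what the delegated proof rests on. Your closing observation that the constraint $s\le t$ is what keeps the argument $1+s/t$ inside $(1,2]$, and that continuity of $\psi$ up to $x=1$ is the only point of care in applying the mean value theorem on the closed interval, matches the paper's own remark about which part of (H.1) is actually needed.
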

\begin{proof}
See \cite[Lemma~3.1]{Hermite} and notice that the proof only uses that  $|\psi'|$ is bounded in $(1,2]$.
\end{proof}
\begin{remark}  Notice that $g_1(t,s)$ satisfies
$|g_1(t,s)| \le  C s^{\alpha} t^{2\beta-\alpha }$,
because  $\alpha<1$ and $\alpha\leq2\beta$.  Therefore, for any $0<s\le t$, we obtain
\[
{\mathbb E}\left[ (X_{t+s} - X_t)^2\right] \le  C s^{\alpha} t^{2\beta-\alpha }.
\]
With the notation of Section 2.3, this implies
\begin{equation} \label{ecua1}
\xi^2_{j,n} \le Cn^{-2\beta} j^{2\beta-\alpha}.
\end{equation}
On the other hand, we deduce that for every $T>0$, there exists $C>0$, which depends on $T$ and the covariance of $X$, such that 
\begin{align}\label{ineq:varpartial}
\sup_{0\leq t\leq \floor{nT}}\mathbb{E}\left[\Delta X_{\frac{t}{n}}^{2}\right]
\leq C n^{-\alpha}.
\end{align}
\end{remark}

\begin{lemma}\label{lem:cov}
Let $j, k, n$ be integers with $n\geq6$ and $1\leq j\leq k$. Under (H.1)-(H.2), we have the following estimates:
\begin{enumerate}[(a)]
\item If $j+3\leq k\leq 2j+2$, then
\begin{align}\label{ineq:covpartial1}
\Abs{\mathbb{E}\left[\Delta X_{\frac{j}{n}}\Delta X_{\frac{k}{n}}
\right]
}
  \leq Cn^{-2\beta}j^{2\beta-\alpha}k^{\alpha-2}.
\end{align}
\item If $k\geq 2j+2$, then
\begin{align}\label{ineq:covpartial2}
\Abs{\mathbb{E}\left[\Delta X_{\frac{j}{n}}\Delta X_{\frac{k}{n}}
\right]
}
  &\leq Cn^{-2\beta}j^{2\beta+\nu-2}k^{-\nu}.
\end{align}
\end{enumerate}
\end{lemma}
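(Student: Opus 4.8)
The plan is to reduce both estimates to a pointwise bound on the mixed second derivative of the covariance and then feed in the bounds \eqref{ec1} and \eqref{ec2} on $\phi''$ and $\phi'$. First I would remove the dependence on $n$ by self-similarity: since $\{c^{-\beta}X_{ct}\}_{t\ge0}$ has the same law as $\{X_t\}_{t\ge0}$, taking $c=1/n$ gives $\mathbb{E}[X_{a/n}X_{b/n}]=n^{-2\beta}\mathbb{E}[X_aX_b]$, hence $\mathbb{E}[\Delta X_{j/n}\Delta X_{k/n}]=n^{-2\beta}C(j,k)$, where $C(j,k):=\mathbb{E}[(X_{j+1}-X_j)(X_{k+1}-X_k)]$. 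This isolates the prefactor $n^{-2\beta}$ and reduces the statement to bounds for $C(j,k)$ that are uniform over the integers $1\le j\le k$.

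Since $k\ge j+3$ in both cases, the closed rectangle $[j,j+1]\times[k,k+1]$ lies in the region $\{s<t\}$, where by \eqref{eq:R} we have $R(s,t)=s^{2\beta}\phi(t/s)$ and, by (H.1)-(H.2), $R$ is $C^2$. Thus I can represent the second difference as
\[
C(j,k)=\int_j^{j+1}\!\!\int_k^{k+1}\partial_s\partial_t R(s,t)\,dt\,ds,\qquad \partial_s\partial_t R(s,t)=s^{2\beta-2}\Big[(2\beta-1)\phi'(t/s)-\tfrac{t}{s}\,\phi''(t/s)\Big].
\]
Bounding the integral by the area of the rectangle (which is $1$) times the supremum of the integrand, everything comes down to estimating $|\partial_s\partial_t R(s,t)|$ for $s\in[j,j+1]$, $t\in[k,k+1]$, where $s$ is comparable to $j$ and $t$ to $k$; I set $u=t/s$.

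I would then split according to the range of $u$. In case (a), the constraint $j+3\le k\le 2j+2$ keeps the smallest value of $u$ on the rectangle, $u=k/(j+1)$, inside $(1,2]$, so the singular branch of \eqref{ec1}-\eqref{ec2} applies near the diagonal: $|\phi'(u)|\le C(u-1)^{\alpha-1}$ and $|\phi''(u)|\le C(u-1)^{\alpha-2}$. Since $u-1=(t-s)/s$ is comparable to $(k-j)/j$ and the $\phi''$ term dominates, this gives $|\partial_s\partial_t R|\le Cj^{2\beta-2}\big((k-j)/j\big)^{\alpha-2}=Cj^{2\beta-\alpha}(k-j)^{\alpha-2}$, the near-diagonal factor governing \eqref{ineq:covpartial1} (recall $k$ is comparable to $j$ in this regime). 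In case (b), $k\ge 2j+2$ forces $u=t/s\ge 2$ throughout, so the far-field branch applies: $|\phi'(u)|\le Cu^{-\nu}$ and $|\phi''(u)|\le Cu^{-\nu-1}$, whence both terms of $\partial_s\partial_t R$ are bounded by $Cs^{2\beta-2}u^{-\nu}$; using $u\ge k/(j+1)$ this is at most $Cj^{2\beta-2}(k/j)^{-\nu}=Cj^{2\beta+\nu-2}k^{-\nu}$, which is \eqref{ineq:covpartial2}. Multiplying by $n^{-2\beta}$ yields both claims.

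The step I expect to be most delicate is the uniform control of the range of $u=t/s$ over the whole rectangle, in particular the behaviour at the transition value $u=2$ in case (a): although the minimal-$u$ corner satisfies $u\le 2$, the argument can exceed $2$ at the other corners (by $O(1/j)$ for large $j$, but by a bounded amount for small $j$), so I must either invoke the $C^2$-regularity of $\phi$ to match the two branches of \eqref{ec1} up to a constant at $u=2$, or split the rectangle along the level set $\{t=2s\}$. A secondary bookkeeping point is verifying that $u-1$ is bounded below by a constant multiple of $(k-j)/j$ uniformly as $s$ ranges over $[j,j+1]$; here the constraint $k-j\le j+2$ together with $\alpha<1$ is what keeps the resulting near-diagonal singularity integrable in the sums of Section~\ref{eq:convergence_variations}.
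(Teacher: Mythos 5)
Your overall strategy is the same as the paper's in substance: scale out the factor $n^{-2\beta}$ by self-similarity, and control the rectangular increment of $R(s,t)=s^{2\beta}\phi(t/s)$ through sup-bounds on $\phi'$ and $\phi''$ over the range of $u=t/s$, split along the two branches of \eqref{ec1}--\eqref{ec2}. Where the paper decomposes the covariance into a cross term plus a double difference of $\phi$ and applies the Mean Value Theorem twice, you integrate the mixed partial $\partial_s\partial_t R(s,t)=s^{2\beta-2}\bigl[(2\beta-1)\phi'(t/s)-\tfrac{t}{s}\phi''(t/s)\bigr]$ over the rectangle $[j,j+1]\times[k,k+1]$; these are the same estimate in different packaging. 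Your handling of the transition at $u=2$ in case (a) is in fact more careful than the paper's (which only remarks that the interval lies in $[1,5]$): on $[2,5]$ the second branch of \eqref{ec1} gives a constant bound, which is dominated by $C\bigl((k-j)/j\bigr)^{\alpha-2}$ since $(k-j)/j\le 3$ there, so no matching at $u=2$ is really needed. Your secondary point also checks out: $k-j\ge 3$ gives $u-1\ge\frac{k-j-1}{j+1}\ge c\,\frac{k-j}{j}$ uniformly on the rectangle. Case (b) is complete and correct as written.

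The gap is the final step of case (a). What your computation actually establishes is $\Abs{\E[\Delta X_{\frac jn}\Delta X_{\frac kn}]}\le Cn^{-2\beta}j^{2\beta-\alpha}(k-j)^{\alpha-2}$, and the parenthetical ``recall $k$ is comparable to $j$'' does not convert this into \eqref{ineq:covpartial1}: since $\alpha-2<0$ and the gap $k-j$ can be as small as $3$ while $k$ is large, one has $(k-j)^{\alpha-2}\ge k^{\alpha-2}$, so the implication runs in the wrong direction. Indeed \eqref{ineq:covpartial1} as printed is not provable because it is false: for fBm with $H<\frac12$ (so $\alpha=2\beta=2H$ and (H.1)--(H.2) hold), stationarity of increments gives $\E[\Delta X_{\frac jn}\Delta X_{\frac kn}]=\frac{n^{-2H}}{2}\bigl(|k-j+1|^{2H}+|k-j-1|^{2H}-2|k-j|^{2H}\bigr)$, which for $k=j+3$ and $j\to\infty$ is a fixed nonzero multiple of $n^{-2H}$ (strict concavity of $x^{2H}$), whereas the right-hand side of \eqref{ineq:covpartial1} is $Cn^{-2H}k^{2H-2}\to 0$. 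You should know that the paper's own proof harbors exactly the same flaw, hidden in its claim that $|\phi''(x)|\le C(k/j)^{\alpha-2}$ for all $x\in\bigl[\frac{k}{j+1},\frac{k+1}{j}\bigr]$: near the diagonal $x-1\asymp (k-j)/j$ can be of order $1/j$, so \eqref{ec1} only yields $C\bigl((k-j)/j\bigr)^{\alpha-2}$, which is unbounded in $j$, while $(k/j)^{\alpha-2}$ stays bounded. The honest fix is to restate part (a) with $(k-j)^{\alpha-2}$ in place of $k^{\alpha-2}$ --- which is precisely what your integral representation proves --- and this corrected version suffices everywhere the lemma is invoked: in \eqref{conv:inctozero} one has $k-j\asymp k$ anyway, and in \eqref{eq:ecua17p1}, \eqref{eq:Gtildebound} and Lemma \ref{lem:sum_est1} the factor $k^{\alpha-2}$ is in any case traded for a power of the gap before summing, and all those sums converge equally well with the gap factor. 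So: do not force your (correct) bound to match the printed statement; flag the discrepancy and prove the corrected inequality instead.
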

\begin{proof}
We have
\begin{align*}
{\mathbb E}\left[ \Delta X_{\frac kn} \Delta X_{\frac{j}{n}}\right] &= n^{-2\beta}(j+1)^{2\beta}\left( \phi\left( \frac{k+1}{j+1}\right) - \phi\left( \frac{k}{j+1}\right)\right)\\
&\qquad\quad - n^{-2\beta}j^{2\beta}\left( \phi\left( \frac{k+1}{j}\right) - \phi\left( \frac{k}{j}\right)\right)\\
&= n^{-2\beta}\left((j+1)^{2\beta}-j^{2\beta}\right)\left( \phi\left( \frac{k+1}{j+1}\right) - \phi\left( \frac{k}{j+1}\right)\right)\\
&\qquad\quad +n^{-2\beta}j^{2\beta}\left[ \phi\left( \frac{k+1}{j+1}\right) - \phi\left( \frac{k}{j+1}\right) - \phi\left( \frac{k+1}{j}\right) + \phi\left( \frac{k}{j}\right)\right].
\end{align*}
We first show  (\ref{ineq:covpartial1}). Condition $j+3 \le k \le2j+2$ implies that the  interval $\left[ \frac k {j+1}, \frac {k+1} j \right]$ is included in the interval $[1,5]$.
Therefore, using (\ref{ec1}) and  (\ref{ec2}), we deduce that there exists a constant $C>0$ such that for all   $x\in \left[ \frac k {j+1}, \frac {k+1} j \right]$,
\[ 
\left| \phi' \left( x \right)\right| \le    C(k/j)^{\alpha-1}.
\]
and
\[ 
\left| \phi''\left( x \right)\right| \le    C(k/j)^{\alpha-2}.
\]
The  estimate (\ref{ineq:covpartial1}) follows easily from the Mean Value Theorem.

On the other hand   $k\ge 2j+2$ implies that the  interval $\left[ \frac k {j+1}, \frac {k+1} j \right]$ is included in the interval $[2, \infty]$.
Therefore, using (\ref{ec1}) and  (\ref{ec2}), we deduce that there exists a constant $C>0$ such that for all   $x\in \left[ \frac k {j+1}, \frac {k+1} j \right]$,
\[ 
\left| \phi' \left( x \right)\right| \le    C(k/j)^{-\nu}.
\]
and
\[ 
\left| \phi''\left( x \right)\right| \le    C(k/j)^{-\nu-1}.
\]
Therefore,  estimate (\ref{ineq:covpartial2}) follows easily from the Mean Value Theorem. The proof of the lemma is now complete.
\end{proof}

Last, we have two technical results that have been used in the proofs of Theorems  \ref{thm1} and \ref{Cor1}.  For a fixed integer $n$ and nonnegative real $t_{1},t_{2}$, note that the notation of Section \ref{Notationextra} gives
\[\mathbb{E}[\Delta X_{\frac{t_{1}}{n}}\Delta X_{\frac{t_{2}}{n}}]=\Ip{\partial_{\frac{t_{1}}{n}},\partial_{\frac{t_{2}}{n}}}_{\hten}.\]
 
\begin{lemma}\label{lem:sum_est1}
Assume $X$ satisfies (H.1) and (H.2).  Then for any integer $n\ge 2$ and real $T>0$, there is a constant  $C$  is a constant which depends on $T$ and the covariance of $X$, such that
\begin{equation}\label{ineq:partialjkbound}
\sup_{0\le k\le  \lfloor nT \rfloor-1}\sum_{j=0}^{\lfloor nT \rfloor -1}\left|\left< \partial_{\frac jn}, \partial_{\frac kn}\right>_\hten\right| \le Cn^{-\alpha}.
\end{equation}
\end{lemma}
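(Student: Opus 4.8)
The plan is to fix an index $k$ with $0\le k\le \floor{nT}-1$ and estimate $\sum_{j=0}^{\floor{nT}-1}\Abs{\Ip{\partial_{\frac jn},\partial_{\frac kn}}_{\hten}}$ uniformly in $k$, by partitioning the summation index $j$ according to its distance from $k$. Recalling that $\Ip{\partial_{\frac jn},\partial_{\frac kn}}_{\hten}=\E[\DXj\,\DXk]$, the goal is to show that each region contributes at most $Cn^{-\alpha}$, with $C$ independent of $k$.

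First I would dispose of the diagonal and near-diagonal block $\{j:\Abs{j-k}\le 2\}$. For these at most five indices the Cauchy--Schwarz inequality together with \eqref{ineq:varpartial} give $\Abs{\Ip{\partial_{\frac jn},\partial_{\frac kn}}_{\hten}}\le \xi_{j,n}\xi_{k,n}\le Cn^{-\alpha}$, so the block contributes $O(n^{-\alpha})$. The endpoint $j=0$ (with $k\ge 3$) falls outside the scope of Lemma \ref{lem:cov}, so I would treat it directly: from $R(s,t)=s^{2\beta}\phi(t/s)$ one obtains $\Ip{\partial_0,\partial_{\frac kn}}_{\hten}=n^{-2\beta}(\phi(k+1)-\phi(k))$, which by \eqref{ec2} is $O(n^{-2\beta}k^{-\nu})=O(n^{-\alpha})$ since $\alpha\le 2\beta$; the mirror case $k=0$ is identical, the resulting tail $\sum_{j}j^{-\nu}$ being convergent because $\nu>1$.

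For the remaining off-diagonal terms I would use the symmetry of the covariance to reduce to the orderings $1\le j\le k-3$ and $k+3\le j$, applying Lemma \ref{lem:cov} with the smaller of the two indices playing the role of $j$ in \eqref{ineq:covpartial1}--\eqref{ineq:covpartial2}. This yields four sub-sums governed by the two regimes $k\le 2j+2$ and $k\ge 2j+2$. The recurrent computation is that each sub-sum collapses to a constant multiple of $n^{-2\beta}k^{2\beta-1}$: in the regime $k\le 2j+2$ the two indices are comparable, so one sums $\asymp k$ terms each of size $n^{-2\beta}k^{2\beta-2}$ (after replacing $j^{2\beta-\alpha}$ by $k^{2\beta-\alpha}$ and $j^{\alpha-2}$ by $k^{\alpha-2}$, legitimate since $2\beta-\alpha\ge 0$ and $\alpha-2<0$); in the regime $k\ge 2j+2$ one sums either $j^{2\beta+\nu-2}$ over $j\le k/2$ or $j^{-\nu}$ over $j\ge 2k$, whose partial and tail sums are of order $k^{2\beta+\nu-1}$ and $k^{1-\nu}$ respectively, the growth and convergence being controlled by $\nu>1$ (which also guarantees $2\beta+\nu-2>-1$). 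In all four cases the $k$-powers assemble to exactly $n^{-2\beta}k^{2\beta-1}$.

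To finish I would invoke the standing inequalities $0<\alpha\le 2\beta$ and $\beta\le \tfrac12$: since $2\beta-1\le 0$ we have $k^{2\beta-1}\le 1$ for $k\ge 1$, so each off-diagonal sub-sum is bounded by $Cn^{-2\beta}\le Cn^{-\alpha}$, and adding the finitely many blocks gives the uniform bound \eqref{ineq:partialjkbound}. I expect the only genuine difficulty to be bookkeeping: verifying that the partial and tail sums over $j$ converge or grow at the advertised rates under the hypotheses on $\nu$ and $\beta$, and checking in each regime that the auxiliary exponents of $k$ cancel down to precisely $2\beta-1\le 0$, which is exactly what renders the estimate uniform in $k$ up to $\floor{nT}-1$.
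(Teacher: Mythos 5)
Your proposal is correct and follows essentially the same route as the paper: near-diagonal terms via Cauchy--Schwarz and \eqref{ineq:varpartial}, then Lemma \ref{lem:cov} applied with the smaller index in the role of $j$, split into the comparable regime $k\le 2j+2$ and the separated regime $k\ge 2j+2$. Two differences are worth recording. First, your bookkeeping is tidier: you assemble each of the four off-diagonal sub-sums into the single uniform bound $Cn^{-2\beta}k^{2\beta-1}$ and finish with $k^{2\beta-1}\le 1$ (valid since $\beta\le\frac12$) and $n^{-2\beta}\le n^{-\alpha}$; the paper instead argues case by case, comparing with $n^{-\alpha}$ via $k\le \floor{nT}$ and splitting the separated regime further according to the sign of $\alpha+\nu-2$, a dichotomy your computation renders unnecessary. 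Your exponent checks are sound: $2\beta+\nu-2>-1$ makes $\sum_{j\le k/2}j^{2\beta+\nu-2}\asymp k^{2\beta+\nu-1}$, and $\nu>1$ makes the tail $\sum_{j\ge 2k}j^{-\nu}\le Ck^{1-\nu}$. Second, and more substantively, you explicitly treat the endpoint pairs involving the index $0$, which lie outside the hypotheses of Lemma \ref{lem:cov}; the paper's proof of this lemma silently reduces to $4\le j+3\le k$ or $4\le k+3\le j$ and thus skips them. For $j=0$, $k\ge 3$, Cauchy--Schwarz alone would do (a single term of size $Cn^{-\alpha}$), but for the row $k=0$ it would not: one gets $\sim n$ terms, and the crude bound $\xi_{0,n}\xi_{j,n}\le Cn^{-2\beta}j^{\beta-\alpha/2}$ sums to something far larger than $n^{-\alpha}$. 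Your direct estimate $\Abs{\Ip{\partial_{0},\partial_{\frac jn}}_{\hten}}=n^{-2\beta}\Abs{\phi(j+1)-\phi(j)}\le Cn^{-2\beta}j^{-\nu}$, summable since $\nu>1$, is exactly what is needed; it is the same endpoint computation the paper performs in Case 1 of the proof of Lemma \ref{sigma_comps}, so your argument in fact patches a small lacuna in the paper's own proof of this lemma.
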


\begin{proof}     In view of the estimate (\ref{ineq:varpartial}), we can assume that $n\ge 6$ and $4\le j+3 \le k$ or $4\le k+3 \le j$. If  $4\le j+3 \le k$, from the estimates (\ref{ineq:covpartial1}) and (\ref{ineq:covpartial2}), we deduce
\[
\left|\left< \partial_{\frac jn}, \partial_{\frac kn}\right>_\hten\right| \le C n^{-2\beta} j^{2\beta-2}.
\]
Summing in the index $j$ we get the desired result, because $2\beta-1 \le 0$ and $2\beta \ge \alpha$. On the other hand, if  $4\le k+3 \le j\le 2k+2$, the estimates (\ref{ineq:covpartial1})  yields
\[
\left|\left< \partial_{\frac jn}, \partial_{\frac kn}\right>_\hten\right| \le C n^{-2\beta} k^{2\beta-\alpha} j^{\alpha-2} \le C n^{-\alpha} j^{\alpha-2},
\]
which gives the desired estimate. Finally,  if   $4\le k+3$ and $2k+2 \le j$, the estimate    (\ref{ineq:covpartial2}) yields
\[
\left|\left< \partial_{\frac jn}, \partial_{\frac kn}\right>_\hten\right| \le C n^{-2\beta} k^{2\beta+\nu -2} j^{-\nu}.
\]
If  $\alpha +\nu -2 \le 0$,  then summing the above estimate in $j$ we obtain the bound
 \[
Cn^{-2\beta}  k^{2\beta-\alpha + (\alpha +\nu-2)} \le Cn^{-\alpha}.
\]
On the other hand, if 
$\alpha +\nu -2  >0$, then
 \[
C n^{-2\beta} k^{2\beta  +\nu -2} j^{-\nu} \le  C n^{-2\beta} k^{2\beta -\alpha } \left( \frac kj \right) ^{\alpha +\nu -2}  j^{\alpha-2}  \le C
n^{-\alpha} j^{\alpha-2}
\]
and summing in $j$ we get the desired bound.
\end{proof}
\begin{lemma}\label{lem:aux}
Assume that $0<\alpha<1$ and let $n\geq 1$ be an integer. Then, for every $r\in\mathbb{N}$ and $T\geq0$,
\begin{align}\label{ineq:covpartialepsilon}
\sum_{j=0}^{\floor{nT}-1}\Abs{\Ip{\partial_{\frac{j}{n}},\widetilde{\varepsilon}_{\frac{j}{n}}}_{\hten}}^{r}\leq Cn^{-2\beta(r-1)}.
\end{align}
\end{lemma}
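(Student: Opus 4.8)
The plan is to compute the inner product $\Ip{\partial_{\frac jn},\widetilde{\varepsilon}_{\frac jn}}_{\hten}$ explicitly, reduce it to a difference of variances of $X$, and then exploit a telescoping sum. First I would write $\partial_{\frac jn}=\varepsilon_{\frac{j+1}{n}}-\varepsilon_{\frac jn}$ and use $\widetilde{\varepsilon}_{\frac jn}=\frac12(\varepsilon_{\frac jn}+\varepsilon_{\frac{j+1}{n}})$, together with the defining relation $\Ip{\varepsilon_a,\varepsilon_b}_{\hten}=\E[X_aX_b]$. Expanding, the two cross terms $\Ip{\varepsilon_{\frac{j+1}{n}},\varepsilon_{\frac jn}}_{\hten}$ and $\Ip{\varepsilon_{\frac jn},\varepsilon_{\frac{j+1}{n}}}_{\hten}$ cancel, leaving
\[
\Ip{\partial_{\frac jn},\widetilde{\varepsilon}_{\frac jn}}_{\hten}=\tfrac12\left(\E\big[X_{\frac{j+1}{n}}^2\big]-\E\big[X_{\frac jn}^2\big]\right).
\]
Invoking the self-similar form \eqref{eq:R}, namely $\E[X_t^2]=R(t,t)=t^{2\beta}\phi(1)$, this equals $\frac{\phi(1)}{2n^{2\beta}}\big((j+1)^{2\beta}-j^{2\beta}\big)$, a nonnegative quantity.

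Second, I would set $a_j=(j+1)^{2\beta}-j^{2\beta}\ge0$ and record the two facts that drive the whole estimate: $a_j\le1$ for every $j\ge0$, and $\sum_{j=0}^{N-1}a_j=N^{2\beta}$ telescopes. The bound $a_j\le1$ holds with equality at $j=0$, while for $j\ge1$ the mean value theorem gives $a_j=2\beta\,\theta^{2\beta-1}$ for some $\theta\in(j,j+1)\subset(1,\infty)$, whence $a_j\le2\beta\le1$ because $2\beta-1\le0$ and $\beta\le\frac12$. The crucial observation is now that, since $a_j\in[0,1]$ and $r\ge1$, one may absorb powers via $a_j^r\le a_j$; this is precisely what lets us bypass any case analysis on the sign of $(2\beta-1)r$.

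Finally, combining these ingredients gives
\begin{align*}
\sum_{j=0}^{\floor{nT}-1}\Abs{\Ip{\partial_{\frac jn},\widetilde{\varepsilon}_{\frac jn}}_{\hten}}^{r}
&=\Big(\tfrac{\phi(1)}{2}\Big)^r n^{-2\beta r}\sum_{j=0}^{\floor{nT}-1}a_j^r\\
&\le \Big(\tfrac{\phi(1)}{2}\Big)^r n^{-2\beta r}\floor{nT}^{2\beta}
\le C\,n^{-2\beta(r-1)},
\end{align*}
using $a_j^r\le a_j$, the telescoping identity, and $\floor{nT}^{2\beta}\le(nT)^{2\beta}=T^{2\beta}n^{2\beta}$. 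The main (and essentially only) obstacle is spotting that the powers $a_j^r$ can be dominated by $a_j$ so that the sum telescopes; a more naive route would estimate $a_j\le 2\beta\, j^{2\beta-1}$ and then split into the cases $(2\beta-1)r>-1$, $=-1$, and $<-1$ for the resulting $p$-series, which works but is longer and requires verifying that the largest admissible exponent $(2\beta-1)r+1$ never exceeds $2\beta$. I would also flag the implicit hypothesis $r\ge1$ (the lemma is applied with $r=2h+1-2u\ge1$); the inequality fails for $r=0$, where the left-hand side equals $\floor{nT}$.
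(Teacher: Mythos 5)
Your proof is correct and follows essentially the same route as the paper: both reduce the inner product to $\tfrac12\E\bigl[X_{\frac{j+1}{n}}^2 - X_{\frac{j}{n}}^2\bigr] = \tfrac{\phi(1)}{2}\bigl((\tfrac{j+1}{n})^{2\beta}-(\tfrac jn)^{2\beta}\bigr)$, pull out $r-1$ factors via the uniform bound (your $a_j\le 1$ is exactly the paper's $\Psi_n(j)\le Cn^{-2\beta}$), and telescope the remaining sum. Your closing observation that $r\ge 1$ is genuinely needed (the bound fails at $r=0$) is accurate and consistent with how the lemma is applied in the paper, where the exponent $2h+1-2u$ is always at least $1$.
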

\begin{proof}
By \eqref{eq:R},
\begin{align*}
\Ip{\partial_{\frac{j}{n}},\widetilde{\varepsilon}_{\frac{j}{n}}}_{\hten}
  &=\frac 12\mathbb{E}\left[(X_{\frac{j+1}{n}}-X_{\frac{j}{n}})(X_{\frac{j+1}{n}}+X_{\frac{j}{n}})\right]=\frac{1}{2}\mathbb{E}\left[X_{\frac{j+1}{n}}^2-X_{\frac{j}{n}}^2\right]=\phi(1)\Psi_{n}(j),
\end{align*}
where 
\begin{align*}
\Psi_{n}(j)
  &=\left(\left(\frac{j+1}{n}\right)^{2\beta}-\left(\frac{j}{n}\right)^{2\beta}\right).
\end{align*}
We can easily show that $\Psi_{n}(j)\leq  C n^{-2\beta}$, and hence, 
\begin{align*}
\sum_{j=0}^{\floor{nT}-1}\Abs{\Ip{\partial_{\frac{j}{n}},\widetilde{\varepsilon}_{\frac{j}{n}}}_{\hten}}^r
  &=\phi(1)^{r}\sum_{j=0}^{\floor{nT}-1}\Psi_{n}(j)^r\leq  Cn^{-2\beta(r-1)}\sum_{j=0}^{\floor{nT}-1}\Psi_{n}(j).
\end{align*}
Since the right-hand side of the last  inequality is a telescopic sum, we get
\begin{align*}
\sum_{j=0}^{\floor{nT}-1}\Abs{\Ip{\partial_{\frac{j}{n}},\widetilde{\varepsilon}_{\frac{j}{n}}}_{\hten}}^r
  &\leq  Cn^{-2\beta(r-1)}\left(\frac{\floor{nT}}{n}\right)^{2\beta}.
\end{align*}
Relation \eqref{ineq:covpartialepsilon} follows from the previous inequality.
\end{proof}

\end{document}